\newtheorem{lemma}{Lemma}
\author{
\begin{tabular}{c}
Paul Catala \\
DMA, ENS \\
{\small \url{pcatala@ens.fr}}
\end{tabular}
\begin{tabular}{c}
Vincent Duval\\
MOKAPLAN, Inria Paris \\
{\small \url{vincent.duval@inria.fr}}
\end{tabular} 
\begin{tabular}{c}
Gabriel Peyr\'e \\
DMA, ENS \\
{\small \url{gabriel.peyre@ens.fr}}
\end{tabular}
}
\title{A Low-Rank Approach \\ to Off-The-Grid Sparse Super-Resolution}
\begin{document}

\maketitle

\begin{abstract}
We propose a new solver for the sparse spikes super-resolution problem over the space of Radon measures. A common approach to off-the-grid deconvolution considers semidefinite (SDP) relaxations of the total variation (the total mass of the absolute value of the measure) minimization problem. The direct resolution of this SDP is however intractable for large scale settings, since the problem size grows as $f_c^{2d}$ where $f_c$ is the cutoff frequency of the filter and $d$ the ambient dimension. Our first contribution is a Fourier approximation scheme of the forward operator, making the TV-minimization problem expressible as a SDP. Our second contribution introduces a penalized formulation of this semidefinite lifting, which we prove to have low-rank solutions. Our last contribution is the FFW algorithm, a Fourier-based Frank-Wolfe scheme with non-convex updates. FFW leverages both the low-rank and the Fourier structure of the problem, resulting in an $O(f_c^d \log f_c)$ complexity per iteration. Numerical simulations are promising and show that the algorithm converges in exactly $r$ steps, $r$ being the number of Diracs composing the solution.
\end{abstract}

\section{Introduction}
\label{Introduction}


Sparse super-resolution problems consist in recovering pointwise sources from low-resolution and possibly noisy measurements, a typical example being deconvolution. Such issues arise naturally in fields like astronomical imaging \cite{puschmann05}, fluorescence microscopy \cite{hess06, rust06} or seismic imaging \cite{khaidukov04}, where it may be crucial to correct the physical blur introduced by sensing devices, due to diffraction or photonic noise for instance. They are also related to several statistical problems, for instance compressive statistical learning \cite{gribonval17}, or Gaussian mixture estimation \cite{poon17}.

The tasks we target in this work are framed as inverse problems over the Banach space of bounded Radon measures $\Radon{\Torus^d}$ on the $d$-dimensional torus $\Torus^d = \RR^d/\ZZ^d$. We aim at retrieving a discrete Radon measure $\mu_0 = \sum_{k=1}^r a_{0,k} \de_{x_{0,k}}$ ($a_{0,k} \in \CC$, and $x_{0,k} \in \TT^d$), given linear measurements $y$ in some separable Hilbert space $\Hilbert$, with
\eql{\label{eq:obs} 		
	y = \Phi \mu_0 + w = y_0 + w \in \Hilbert
}%
where $\Phi: \Radon{\Torus^d} \rightarrow \Hilbert$ is a known linear operator, and $w \in \Hilbert$ some unknown noise.

\subsection{Beurling LASSO}

Although this problem is severly ill-posed, sparse estimates are obtained by solving the following optimization program, known as the BLASSO \cite{castro12}
\eq{\label{eq:blasso}\tag{$\Pp_\la(y)$}		
	\uargmin{\mu \in \Radon{\Torus^d}} \frac{1}{2\la}\normH{\Phi \mu - y}^2 + \normtv{\mu}{\Torus^d}
}%
where the total variation norm $\normtv{\mu}{\Torus^d}$ of a measure $\mu \in \Radon{\Torus^d}$ is defined as
\eq{\label{eq:tvnorm}		
	\normtv{\mu}{\Torus^d} \eqdef \sup\enscond{\Re\Big(\int_{\Torus^d} \overline{\eta} \diff \mu\Big)}{\eta \in \Cont{\Torus^d}, \quad \normi{\eta} \leq 1}.
}
and extends the $\lun$-norm to the infinite-dimensional space of measures, favoring in particular the emergence of Dirac masses in the solution. 
The parameter $\la$ should be adapted to the noise level; in this work we focus on the case where $\la > 0$.

The main asset here is that no assumption is made about the support of the measure, contrary to early superresolution models where it was assumed to live on some discrete grid \cite{donoho92, dossal05}. \textcolor{black}{This grid-free approach has been at the core of several recent works \cite{bredies13,candes14,castro17}, and has offered beneficial mathematical insight on the problem, leading to a better understanding of the impact of minimal separation distance \cite{tang15} or of the signs of the spikes \cite{candes14,castro12}, and to sharp criteria for stable spikes recovery \cite{denoyelle15, duval17}}.
%
%

However,  the infinite-dimensionality of \eqref{eq:blasso} poses a major numerical challenge, and off-the-grid super-resolution, although analytically better, remains difficult to perform in practice. This paper introduces a new mehod to solve this optimization problem in a scalable way.

\subsection{Related works}
Several approaches to solve \eqref{eq:blasso} have been proposed in the literature.
\paragraph{Greedy approaches} In \cite{bredies13}, the authors propose to iteratively estimate the support of the initial measure using a conditional gradient (also known as Frank-Wolfe) scheme, which consists in greedy stepwise additions of new spikes. Frank-Wolfe algorithm is well fitted to operate directly over measures and is relatively inexpensive, but converges slowly. To remedy the matter, one can add non-convex corrective updates as a final step \cite{bredies13, boyd15}. This achieves state-of-the-art result in several applications, among which fluorescence microscopy \cite{boyd15}. 

\paragraph{Semidefinite approaches} \eqref{eq:blasso} may also be solved by lifting it to a semidefinite program. Semidefinite approaches for the total variation minimization problem were originally introduced in \cite{candes14,tang13}, but for unidimensional measures (\ie $d=1$) only. The multivariate case on the other hand raises a far more challenging problem, that may be solved using the \textcolor{black}{so-called} Lasserre's hierarchy \cite{lasserre01}, consisting in a sequence of increasingly better semidefinite approximations of \eqref{eq:blasso} or its dual \cite{castro17,Dumitrescu}.

\textcolor{black}{The numerical interest of semidefinite programs holds in that} they benefit from efficient solvers; however, these are limited to matrices of size a few hundreds. In our case this represents a major impediment, since the matrices involved in the semidefinite relaxations of \eqref{eq:blasso} are typically of size $f_c^d \times f_c^d$, where $f_c$ is the cutoff frequency of the filter. Up to now, semidefinite relaxations have essentially been successful for combinatorial problems like the MaxCut problem \cite{goemans95}, in which the number of variables is high and the degree of the polynomials involved in the constraints is low (it is equal to 2 in the MaxCut case). Imaging problems on the contrary typically involve polynomials with low numbers of variables (essentially the same as the dimension $d$), but high degrees ($f_c$), and to our knowledge, no efficient way of solving the hierarchy in those settings has been proposed yet. 

\paragraph{Non variational approaches} Although we focus here on $\lun$-regularization techniques, there is also a vast literature on non-convex or non-varational superresolution schemes. Many of these methods derive from the same idea proposed by Prony to encode the positions of the spikes as zeros of some polynomial, see \cite{krim96} for a review. This is the case for MUSIC \cite{schmidt86}, ESPRIT \cite{roy89}, or finite rate of innovation \cite{wei16}, among others. They remain difficult to extend to multivariate settings, see for instance \cite{kunis16}.

\subsection{Contributions}

Our first contribution, detailed in Section~\ref{sec:fourier-approximation}, is a spectral approximation scheme of the forward operator. This is a cornerstone of our approach. The approximation result is formulated in Prop.~\ref{prop:F-approximation}.

Next, in Section~\ref{sec:sdp-hierarchies}, we propose a framework of Lasserre-inspired semidefinite liftings for \eqref{eq:blasso}, that generalizes the approach of \cite{tang13} to multi-dimensional settings. More specifically, we derive a hierarchy of semidefinite programs approaching \eqref{eq:blasso} in arbitrary dimension (Prop.~\ref{prop:relaxation}). Then, following works by Curto and Fialkow \cite{curto96} and Laurent \cite{laurent10}, Theorem~\ref{thm:flatness} provides a practical criterion to detect the collapsing of the hierarchy.

Our third and main contribution is the FFW algorithm, detailed in Section~\ref{sec:ffw}, a Fourier-based Frank-Wolfe approach for these liftings, which requires only $O(f_c^d \log f_c)$ elementary computations per iterations, making it scalable for super-resolution problems in dimension greater than one. 

The code to reproduce the numerical results is available online\footnote{\url{https://github.com/Paulcat/Super-Resolution-SDP}}.


\subsection{Notations}

\paragraph{Measures} We consider measures defined over the torus $\Torus^d = (\RR/\ZZ)^d$. Note that in practice, in order to avoid periodization artifacts, one can use zero-padding or symmetrization at the boundary of the signal and still consider a periodic setting. We denote by $\Radon{\Torus^d}$ the space of bounded Radon measures on $\Torus^d$, endowed with its weak-* topology. It is the topological dual of the space $\Cont{\Torus^d}$ of continuous functions on $\TT^d$. 

\paragraph{Linear operators} 
We consider a linear operator $\Phi: \Radon{\Torus^d} \rightarrow \Hilbert$ of the form
\eql{\label{eq:fwdop} 
	\Phi: \mu \in \Radon{\Torus^d} \mapsto \int_{\Torus^d} \phi(x)\diff\mu(x), 
}
where $\phi: \Torus^d \rightarrow \Hh$ is assumed to be smooth. We denote by $\Ll$ the set of operators of this form. A typical instance is a \emph{convolution} operator, where $\phi(x) = \tilde{\phi}(\cdot - x)$, with $\tilde{\phi} \in \Ldeux(\Torus^d)$ and $\Hilbert = \Ldeux(\Torus^d)$. An important example is \emph{ideal low-pass filtering}, as considered in \cite{candes14}, which is a convolution with the Dirichlet kernel $\tilde{\phi}_D(x) \eqdef \sum_{k \in \Om_c} e^{2i\pi\dotp{k}{x}}$, where
\eq{
	\Om_c = \segi{-f_c}{f_c}^d,
}
for some cutoff frequency $f_c \in \NN^*$. This is equivalently modeled in the Fourier domain by taking $\phi(x) = (e^{-2i\pi\dotp{k}{x}})_{k \in \Om_c}$, and $\Hh = \CC^{\abs{\Om_c}}$. $\Phi$ is then simply the Fourier operator $\Ff_c$, defined as
\eql{\label{eq:Fop} 
	\Ff_c: \mu \mapsto \left( c_k(\mu) \right)_{k \in \Om_c}, \qwhereq c_k(\mu) = \int_{\Torus^d} e^{-2i\pi\dotp{k}{x}}\diff\mu(x).
}
Other commonly encountered imaging problems involve non translation-invariant operators, such as \emph{subsampled convolution}. Given a sampling domain $S \subset \Torus^d$, this is modelled as $\phi(x) = (\psi(s - x))_{s \in S}$. In that case, $\Hilbert = \CC^{\abs{S}}$. Lastly, we also consider in this paper more difficult settings, where the kernel is defined as $\phi(x) = (\psi(s,x))_{s \in S}$, with $\Hilbert = \CC^{\abs{S}}$. An example of non translation-invariant operator, studied in Section~\ref{sec:fourier-approx:examples}, are foveated measurements \cite{chang00}.


\paragraph{Trigonometric polynomials, Laurent polynomials} 

A multivariate Laurent polynomial of degree $\ell$ is defined as
\eq{
	z \in \CC^d \mapsto \sum_{k \in \Om_\ell} p_k z^k,
}
where $z^k$ must be understood as $z_1^{k_1}z_2^{k_2} \ldots z_d^{k_d}$, and $\Om_{\ell} = \segi{-\ell}{\ell}^d$. We write $\CC_\ell[\conj{\Z}^\pm]$ the set of Laurent polynomials of degree $\ell$. The restriction to the unit circle of a Laurent polynomial is a trigonometric polynomial.

When working with multivariate polynomials, one need to choose some ordering on the monomials. Typical examples of monomial orderings are the lexicographic order, or the graded lexicographic order. In our case, we use the \emph{colexicographic order}.

\begin{defn}[Colexicographical ordering] Given two multi-indices $\al, \be \in \ZZ^d$, one has $\al \prec_{colex} \be$ if $\al_d < \be_d$, or $\al_d = \be_d$ and $(\al_1, \ldots, \al_{d-1}) \prec_{colex} (\be_1, \ldots,\be_d)$.
\end{defn}

\section{Fourier approximation of operators}
\label{sec:fourier-approximation}
In this section we detail a spectral approximation method for the forward operator $\Phi$, that is consistent with semidefinite approaches (see Section~\ref{sec:sdp-hierarchies}) to solve \eqref{eq:blasso}. Our model encompasses deconvolution problems, as well as non-convolution ones -- in particular, we focus on the case of subsampled convolution and foveation (see Section~\ref{sec:fourier-approx:examples}), which are common in imaging problems.

\subsection{Spectral approximation operator}
Let $\Phi \in \Ll$ be an operator of the form~\eqref{eq:fwdop}, with kernel $\varphi$ (in the following, we assume that $\varphi$ is sufficiently smooth, namely $\varphi\in \Cder{j}(\Torus^d;\Hh)$, with $j\geq \lfloor\frac{d}{2}\rfloor+1$). It is possible to define a Fourier series expansion of $\varphi$, using the coefficients
\begin{align}
  \forall k \in \ZZ^d,\quad  c_k(\phi) &\eqdef \int_{\Torus^d} e^{-2i\pi\dotp{k}{x}}\phi(x) \diff x \in \Hh.
\end{align}
We refer to the paper of Kandil~\cite{kandil83} for the theory of the Fourier series of Hilbert-valued functions. In particular, as $\Hh$ is separable, it is shown that the Parseval formula holds, \ie for all $\psi\in \Ltwo{\Torus^d;\Hh}$,
\begin{align}\label{eq:parseval}
  \int_{\Torus^d}\normH{\psi(x)}^2\diff x = \sum_{k\in\ZZ^d} \normH{c_k(\psi)}^2.
\end{align}
As a consequence, the following equality holds strongly in $\Ltwo{\Torus^d;\Hh}$,
\begin{align}
  \psi  = \sum_{k\in \ZZ^d} c_k(\psi) e_k, \qwhereq e_k: x\mapsto e^{2i\pi\dotp{k}{x}}.
\end{align}

Now, given $f_c\in \NN$, we are interested in approximating $\Phi$ with some operator $\Phi_c$ whose kernel $\varphi_c$ has spectrum supported on $\Om_c$, \ie
\eq{
\Phi_c\in	\Ll_c \eqdef \enscond{\Psi \in \Ll}{c_k(\psi) = 0 \quad \forall\, k \in \ZZ^d \setminus \Om_c}.
} 
\begin{defn}[Spectral approximation operator] The spectral appro\-xi\-ma\-tion of $\Phi$ is the operator $\Phi_c \in \Ll_c$ defined by
\eq{
	c_k(\phi_c) = \left\{
		\begin{array}{ll}
			 c_k(\phi) & \forall\, k \in \Om_c\\
			 0 & \forall\, k \in \ZZ^d \setminus \Om_c
		\end{array}
	\right..
}
\end{defn}
From Parseval's equality~\eqref{eq:parseval}, we see that $\Phi_c$ is the best approximation of $\Phi$ in $\Ll_c$ in terms of the Hilbert-Schmidt operator norm, $\Psi\mapsto \|\Psi\|\eqdef\|\psi\|_{\Ltwo{\Torus^d;\Hh}}$, \ie
\begin{align}
  \Phi_c=\argmin_{\Psi\in \Ll_c}\|\Phi-\Psi \|^2= \argmin_{\Psi\in \Ll_c} \int_{\Torus^d}\normH{\varphi(x)-\psi(x)}^2\diff x.
\end{align}

The next result shows that the solution of the BLASSO when replacing $\Phi$ with $\Phi_c$ approximate the solutions of~\eqref{eq:blasso}.


\begin{prop} Let $\Phi \in \Ll$, with $\phi\in \Cder{j}(\Torus^d;\Hh)$, $j\geq \lfloor\frac{d}{2}\rfloor+1$. For each $f_c \in \NN$, let $\Phi_c \in \Ll_c$ be its spectral approximation operator, and let $\mu_{f_c}$ be a minimizer of $\Ee_{f_c}(\mu) \eqdef \frac{1}{2\la}\norm{\Phi_c\mu - y}^2 + \normtv{\mu}{\Torus^d}$ over $\Radon{\Torus^d}$. 

Then, the sequence $(\mu_{f_c})_{f_c\in \NN}$ has accumulation points in the weak-* topology and each of them is a solution to \eqref{eq:blasso}.
\label{prop:F-approximation}
\end{prop}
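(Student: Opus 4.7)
The proof is a standard $\Gamma$-convergence argument, combining weak-$*$ compactness of balls in $\Radon{\Torus^d}$ with uniform convergence of the kernels $\phi_c \to \phi$.

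The preliminary step is to upgrade the $L^2$ convergence of $\phi_c$ to $\phi$ to uniform convergence. Since $\phi \in \Cder{j}(\Torus^d;\Hh)$ with $j \geq \lfloor d/2\rfloor + 1$, integration by parts yields bounds of the form $\|c_k(\phi)\|_{\Hh} \cdot (1+|k|^2)^{j/2} \in \ell^2(\ZZ^d)$, so by Cauchy--Schwarz
\eq{
\sum_{k\in\ZZ^d\setminus\Om_c}\|c_k(\phi)\|_{\Hh} \leq \Bigl(\sum_{k}\|c_k(\phi)\|_{\Hh}^2(1+|k|^2)^j\Bigr)^{1/2}\Bigl(\sum_{k\notin\Om_c}(1+|k|^2)^{-j}\Bigr)^{1/2},
}
where the last sum converges since $2j > d$ and tends to $0$ as $f_c\to\infty$. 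This gives $\|\phi_c-\phi\|_{\infty}\to 0$, and in particular
\eq{
\|\Phi\mu - \Phi_c\mu\|_{\Hh} \leq \|\phi-\phi_c\|_{\infty}\,\normtv{\mu}{\Torus^d}
}
for every $\mu \in \Radon{\Torus^d}$.

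Next I obtain compactness. By comparing $\Ee_{f_c}(\mu_{f_c})$ to $\Ee_{f_c}(0) = \|y\|^2/(2\la)$, one gets $\normtv{\mu_{f_c}}{\Torus^d} \leq \|y\|^2/(2\la)$, uniformly in $f_c$. Since $\Cont{\Torus^d}$ is separable, closed balls of $\Radon{\Torus^d}$ are weak-$*$ sequentially compact (Banach--Alaoglu), hence $(\mu_{f_c})$ admits weak-$*$ accumulation points. Let $\mu^\dagger$ be such a limit along some subsequence, still denoted $(\mu_{f_c})$.

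For the $\Gamma$-liminf inequality, for any $h \in \Hh$ the function $x \mapsto \langle h,\phi(x)\rangle_{\Hh}$ lies in $\Cont{\Torus^d}$, hence $\langle h,\Phi \mu_{f_c}\rangle \to \langle h,\Phi\mu^\dagger\rangle$, so $\Phi\mu_{f_c} \rightharpoonup \Phi\mu^\dagger$ weakly in $\Hh$. Combined with the uniform bound on $\normtv{\mu_{f_c}}{}$ and the preliminary step, $\Phi_c\mu_{f_c} \rightharpoonup \Phi\mu^\dagger$ in $\Hh$ as well. Weak lower semi-continuity of $\|\cdot\|_{\Hh}$ and of $\normtv{\cdot}{\Torus^d}$ (for weak-$*$ convergence) then give
\eq{
\Ee(\mu^\dagger) \leq \liminf_{f_c\to\infty}\Ee_{f_c}(\mu_{f_c}),
}
where $\Ee$ denotes the BLASSO functional. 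For the $\Gamma$-limsup part, pick any minimizer $\mu^*$ of \eqref{eq:blasso}; since $\Phi_c\mu^* \to \Phi\mu^*$ strongly by the preliminary step, $\Ee_{f_c}(\mu^*) \to \Ee(\mu^*)$, and by optimality $\Ee_{f_c}(\mu_{f_c}) \leq \Ee_{f_c}(\mu^*)$. Chaining the two inequalities yields $\Ee(\mu^\dagger) \leq \Ee(\mu^*)$, proving $\mu^\dagger$ is a minimizer.

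The only delicate point is the uniform convergence $\phi_c \to \phi$ (and the resulting operator norm bound), which is precisely what the smoothness assumption $j \geq \lfloor d/2\rfloor+1$ is designed to ensure; once this is granted, both the weak convergence of $\Phi_c \mu_{f_c}$ and the recovery sequence step are essentially automatic, and the conclusion follows from classical variational arguments.
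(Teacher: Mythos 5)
Your proof is correct and follows essentially the same route as the paper: a uniform TV bound from comparing with the zero measure, weak-* compactness, uniform convergence of $\phi_c$ to $\phi$ from the $\Cder{j}$ assumption (your Cauchy--Schwarz with Sobolev weights is just a variant of the paper's splitting of $\sum_k \normH{c_k(\phi)}$), and then the liminf/limsup comparison. The only cosmetic difference is that you compare against a minimizer of \eqref{eq:blasso} (implicitly using its existence, which is standard), whereas the paper compares against an arbitrary $\mu\in\Radon{\Torus^d}$ and thereby sidesteps that existence question.
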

\begin{proof} By definition of $\Ee_{f_c}$ and $\mu_{f_c}$,
\eq{
	\normtv{\mu_{f_c}}{\Torus^d} \leq \Ee_{f_c}(\mu_{f_c}) \leq \Ee_{f_c}(0) = \frac{1}{2\la}\normH{y}^2.
}
As the total variation ball (of radius $\frac{1}{2\la}\norm{y}^2$) is compact and metrizable for the weak-* topology, the sequence $(\mu_{f_c})_{f_c\in \NN}$ has accumulation points. Let $\mu^\star$ be any accumulation point and let us denote by $(\mu_n)_{n\in \NN}$ any subsequence which converges towards $\mu^\star$ in the weak-$*$ sense.  
We denote by $\Phi_n$ (resp. $\varphi_n$) the corresponding operator (resp.\ kernel). Let $\mu\in\Radon{\Torus^d}$ be any Radon measure. 

We note that, as $\varphi\in \Cder{j}(\Torus^d;\Hh)$ with $j\geq \lfloor\frac{d}{2}\rfloor+1$, 
\begin{align*}
  2 \!\!\sum_{k\in \ZZ^d\setminus\{0\}}\normH{c_k(\varphi)}&\leq 
  \!\sum_{k \in \ZZ^d\setminus\{0\}}\normH{c_k(\varphi)}^2(k_1^j + \cdots + k_d^j)^2  +  \!\!\sum_{k \in \ZZ^d\setminus\{0\}} \frac{1}{(k_1^j + \cdots + k_d^j)^2} \\
  &<+\infty,
\end{align*}
hence the series  $\sum_{k\in \ZZ^d} c_k(\varphi) e_k$ converges uniformly on $\Torus^d$ towards $\varphi$.
As a result, $\varphi_n$ converges uniformly towards $\varphi$, and 
\begin{align*}
  \lim_{n\to+\infty}\normH{\Phi_n \mu - y}^2= \normH{\Phi {\mu} - y}^2.
\end{align*}
Moreover, for any $h\in \Hh$,
\begin{align*}
  \left|\dotp{h}{(\Phi_n \mu_n - \Phi\mu^{\star})} \right|& \leq \normH{h}\int_{\Torus^d} \normH{\varphi_n(x)-\varphi(x)}\diff \abs{\mu_n}(x)\\
                                                          & \quad \quad +\left|\int_{\Torus^d}\dotp{h}{\varphi(x)}\diff \mu_n(x) -\int_{\Torus^d}\dotp{h}{\varphi(x)}\diff \mu^\star(x)\right|.
\end{align*}
The first term vanishes by uniform convergence of $\varphi_n$ and boundedness of $(\mu_n)_{n\in\NN}$, whereas the second one vanishes by the weak-* convergence of $\mu_n$ towards $\mu^\star$. Hence, $\Phi_n \mu_n\rightharpoonup \Phi\mu^{\star}$ weakly in $\Hh$, and 
\begin{align}
  \normH{\Phi \mu^\star-y}^2 \leq \liminf_{n\to +\infty}  \normH{\Phi_n \mu_n-y}^2.
\end{align}

To complete the proof, we note that by definition of $\mu_n$, we have
\eql{\label{eq:ineq}
  \Ee_n(\mu_n) \leq \Ee_n(\mu), 
}
and passing to the inferior limit, we get,
\begin{align*}
  \la \normtv{\mu^\star}{\Torus^d} + \frac{1}{2}\normH{\Phi \mu^\star-y}^2 & \leq \liminf_{n\to +\infty}  \Ee_n(\mu_n) \\
  & \leq \liminf_{n\to +\infty}  \Ee_n(\mu) = \la \normtv{\mu}{\Torus^d}+  \frac{1}{2}\normH{\Phi \mu-y}^2.
\end{align*}
As this is true for any $\mu\in \Radon{\Torus^d}$, we deduce that $\mu^\star$ is a solution to~\eqref{eq:blasso}.
\end{proof}


By construction, $\Phi_c\mu$ only depends on the Fourier coefficients $c_k(\mu)$ for $k \in \Om_c$. Indeed, for any $\mu \in \Radon{\Torus^d}$,
\eq{
	\begin{aligned}
		\Phi_c \mu &= \int_{\Torus^d} \phi_c(x)\diff\mu(x) = \sum_{k\in\Om_c} c_k(\phi) \int_{\Torus^d} e^{2i\pi\dotp{k}{x}}\diff\mu(x) = \sum_{k\in\Om_c} c_{-k}(\phi) (\Ff_c\mu)_k.
	\end{aligned}
}
This leads to the following definition.
\begin{defn}[Spectral approximation factorization] The spectral approximation operator may be factorized as
  \eq{
	\Phi_c = \Aa(\phi)\Ff_c,
}
where $\Aa(\phi): \CC^{\abs{\Om_c}}\rightarrow \Hh$ is defined by $\Aa(\phi)_k\eqdef c_{-k}(\varphi)$. 
In particular, when $\Hilbert$ is of finite dimension $N$, the matrix of $\Aa(\phi) \in \Mm_{N,\abs{\Om_c}}(\CC)$ is given by
\eq{
	\Aa(\phi)_{j,k} = c_{-k}(\phi_j),  \quad \forall\, 1\leq j \leq N, \quad \forall\, k \in \Om_c.
}
We call this matrix the spectral approximation matrix of $\Phi$.
\end{defn}

In the rest of the paper, we therefore focus on solving the problem
\eq{
	\umin{\mu \in \Radon{\Torus^d}} \frac{1}{2\la} \normH{\A \Ff_c \mu - y}^2 + \normtv{\mu}{\Torus^d}.
	\tag{$\Pp_\la(y)$}
}

\subsection{Examples}
\label{sec:fourier-approx:examples}
We give a few instances of problems for which the matrix $\A$ may be computed. Each case discussed below is illustrated in Fig.~\ref{fig:example-measurements}.

\begin{figure}
	\captionsetup[subfigure]{labelformat=empty}
	\subfloat[Dirichlet]{\includegraphics[width=0.25\textwidth]{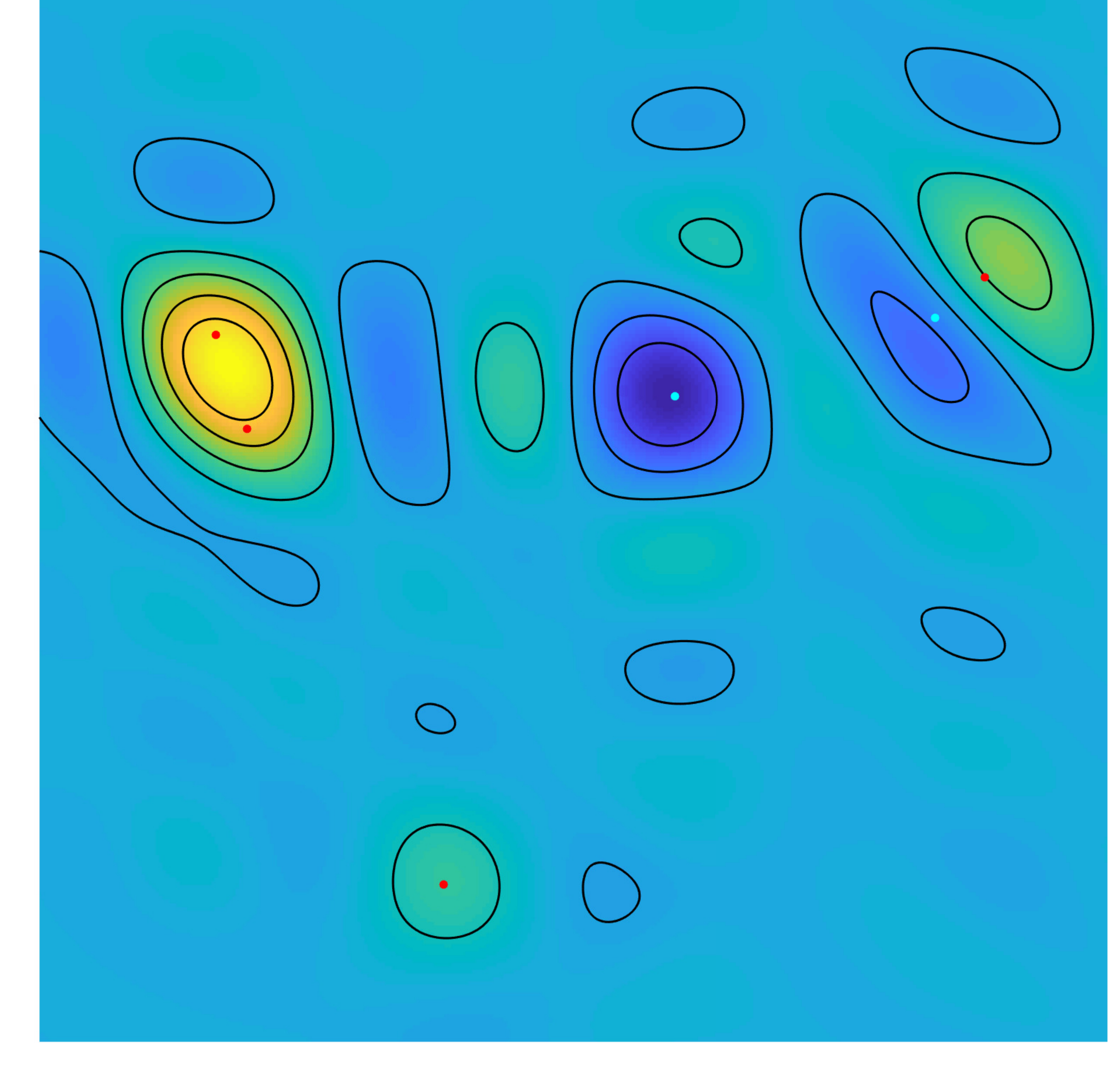}}
	\subfloat[Gaussian]{\includegraphics[width=0.25\textwidth]{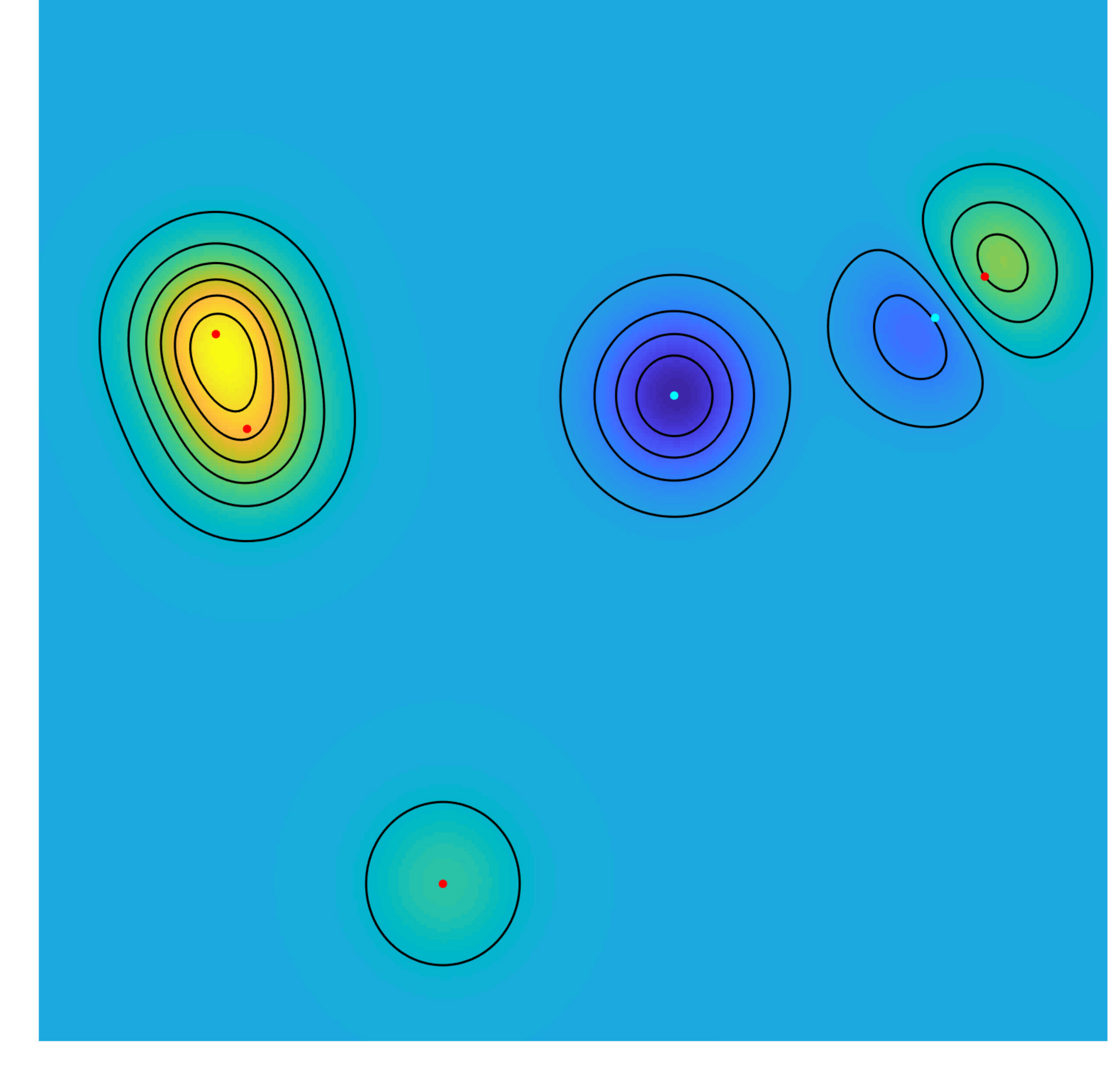}}
	\subfloat[Gaussian foveation]{\includegraphics[width=0.25\textwidth]{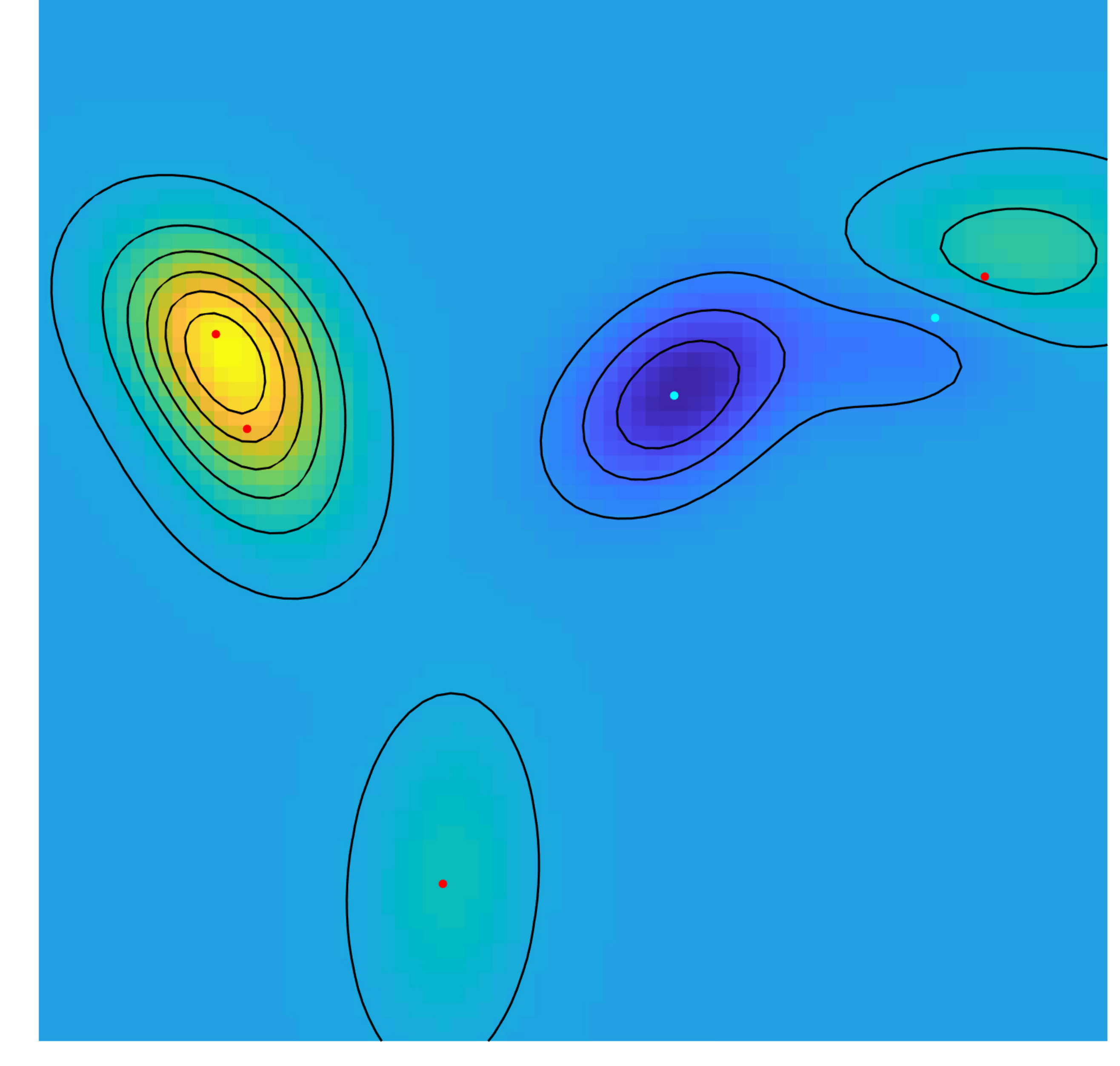}}
	\subfloat[Subsampled Gaussian]{\includegraphics[width=0.25\textwidth]{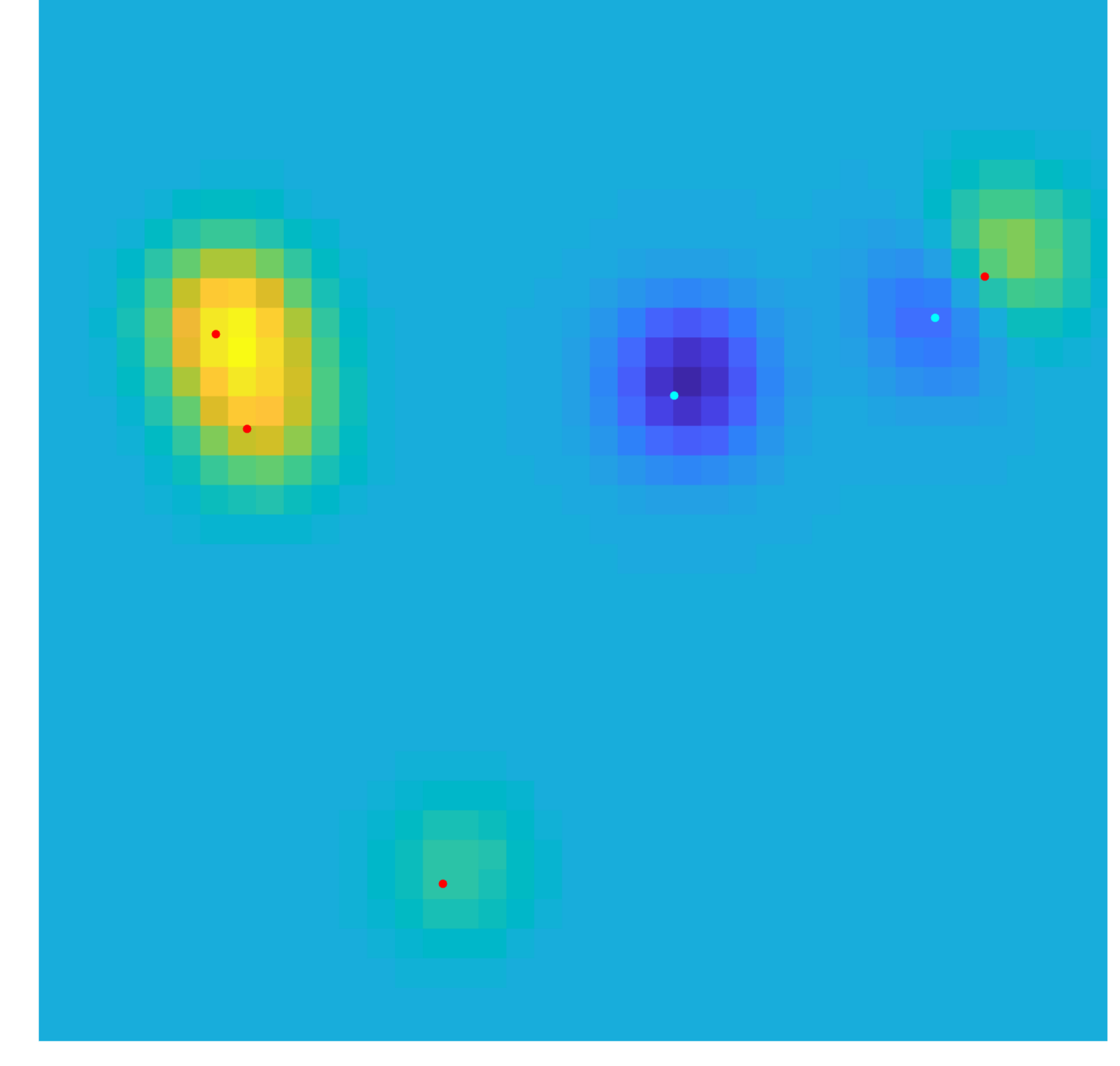}}
	\caption{Examples of measurements. In the first two cases (left), the observations $y$ live in the Fourier domain, and we plot $\Ff_c^*y$. In the last two cases (right), $y$ lives on a grid $\Gg$, and can be plotted directly.}
	\label{fig:example-measurements}
\end{figure}

\paragraph{Deconvolution} The convolution with some kernel $\tilde{\phi} \in \Ldeux(\Torus^d)$, obtained with $\phi(x) = \tilde{\phi}(\cdot - x)$, is equivalently obtained by multiplying the Fourier coefficients of the measure with those of $\tilde{\varphi}$. In other words, one might equivalently choose $\Hh=\ell^2(\ZZ^d)$, and 
\eq{
	\phi(x) = \left(c_k(\tilde{\phi})e^{-2i\pi\dotp{k}{x}}\right)_{k \in \ZZ^d}.
}
The spectral approximation matrix of a convolution operator is thus simply the diagonal matrix
\eq{
	\Aa(\phi) = \Diag~(c_k(\tilde{\phi}))_{k \in \Om_c}.
}
\begin{exmp}
Typical examples of convolution operators include the \emph{ideal low-pass filter} or the (periodized) \emph{Gaussian filter}. Ideal low-pass filtering is the case where
\eq{
	\tilde{\phi}(x) = \sum_{k=-f_c}^{f_c} e^{2i\pi\dotp{k}{x}},
}
and therefore $\phi(x) = (e^{-2i\pi\dotp{k}{x}})_{k \in \Om_c}$ and $\A = \Id$. For Gaussian filtering (with covariance $\Sigma$), the adequate definition of the kernel over the torus is 
\eq{
	\tilde{\phi}(x) = \sum_{k\in\ZZ^d} g(x+k), \qwithq g(t) \eqdef e^{-\frac{1}{2}\dotp{t}{\Sigma^{-1}t}} \quad \forall t \in \RR^d
}
which is also, by Poisson summation formula, $\tilde{\phi}(x) = \sum_{k\in\ZZ^d} \hat{g}(k)e^{-2i\pi\dotp{k}{x}}$, where $\hat{g}$ denotes the continuous Fourier transform of $g$. Hence the approximation matrix reads $\A = \Diag (\hat{g}(k))_{k\in\Om_c}$, with $\hat{g}(k) = (2\pi)^{\frac{d}{2}}(\det{\Sigma})^{\frac{1}{2}}e^{-2\pi^2\dotp{k}{\Sigma k}}$. 
The error induced by the spectral approximation in the Gaussian case is described in Fig.~\ref{fig:gauss-approx}. It can be made negligible with $f_c$ sufficiently large.
\end{exmp}

\begin{figure}
	\captionsetup[subfigure]{labelformat=empty}
	\subfloat[Gaussian]{\includegraphics[width=0.191\textwidth]{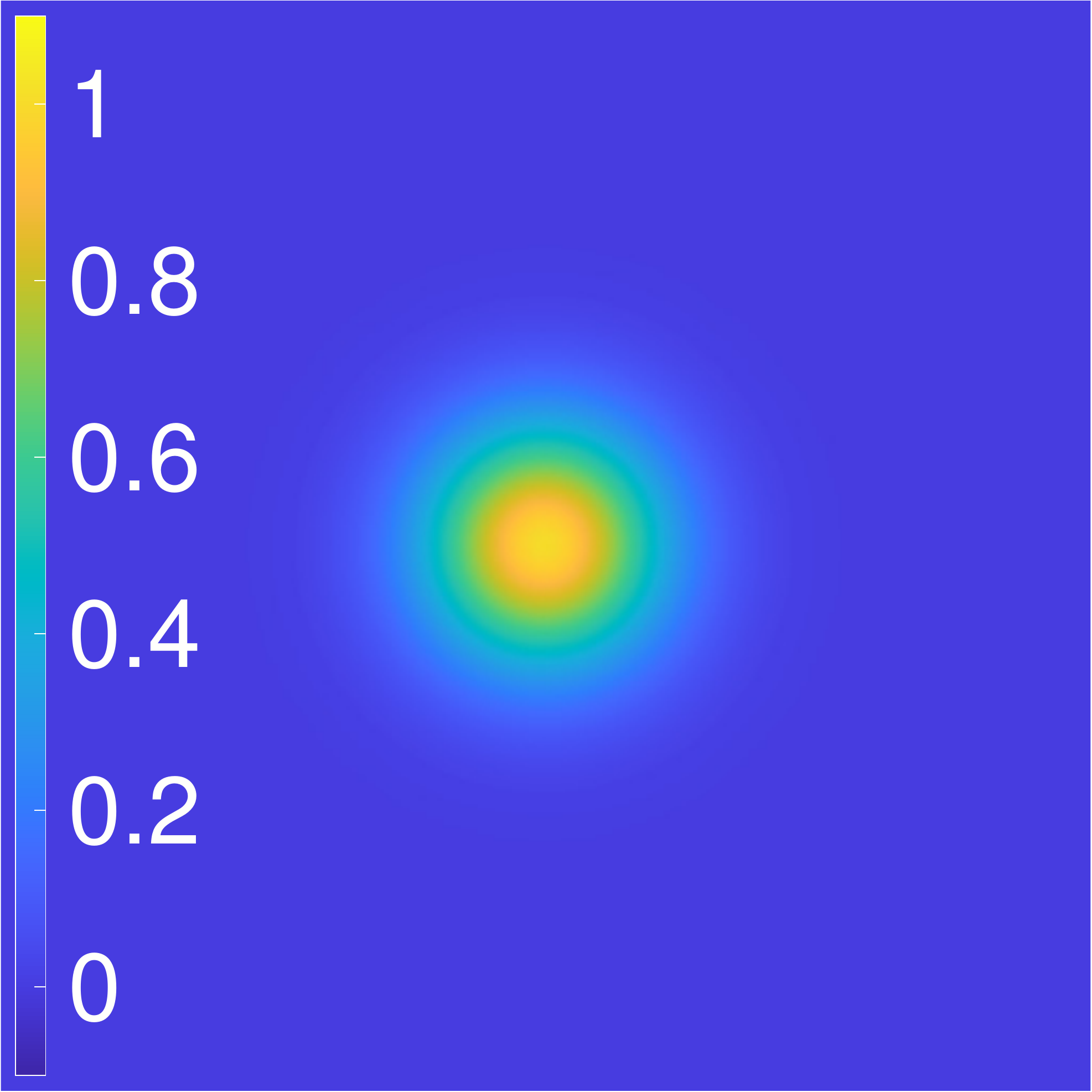}}
	\hspace{2mm}
	\subfloat[$f_c=5$]{\includegraphics[width=0.191\textwidth]{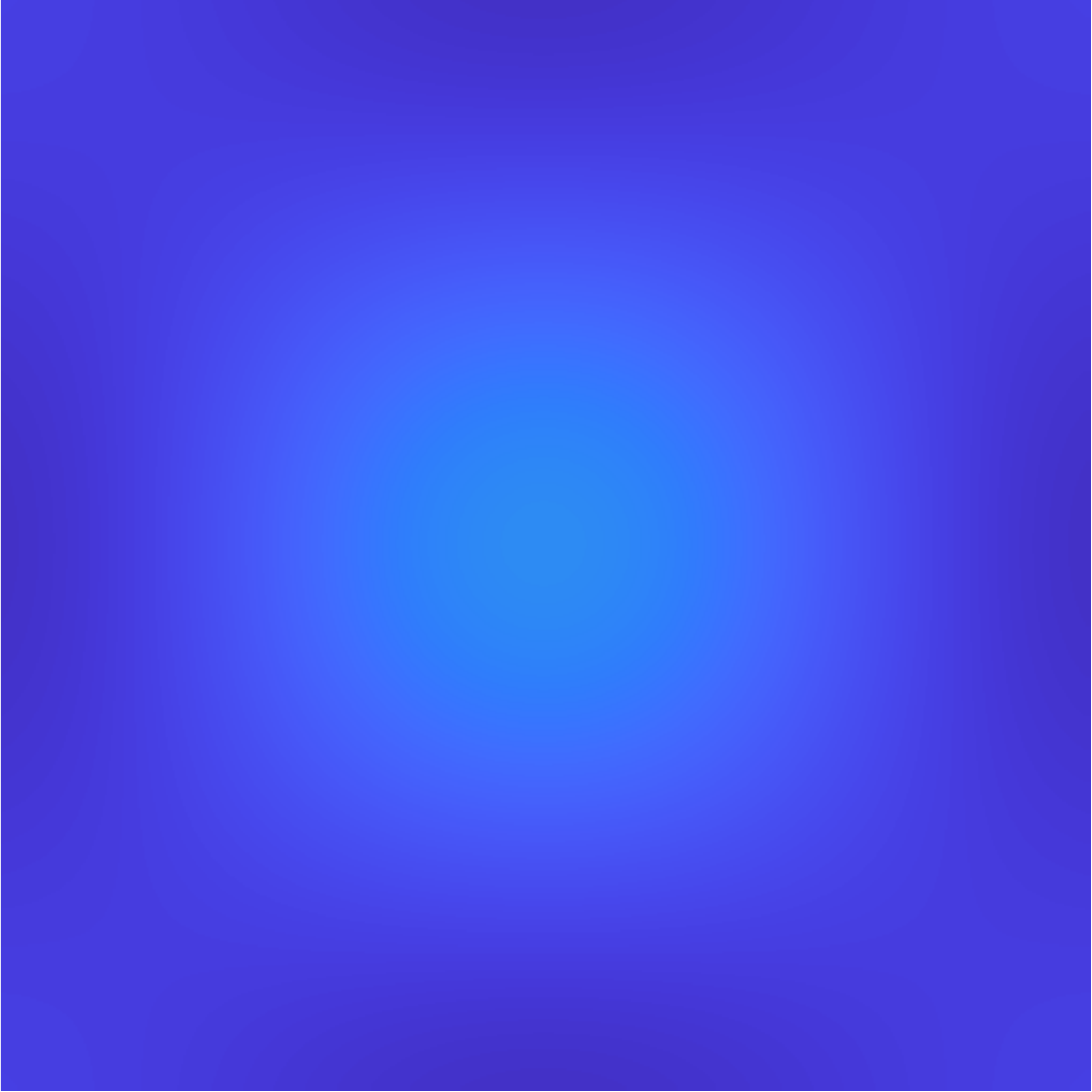}}
	\hspace{0.1mm}
	\subfloat[$f_c=10$]{\includegraphics[width=0.191\textwidth]{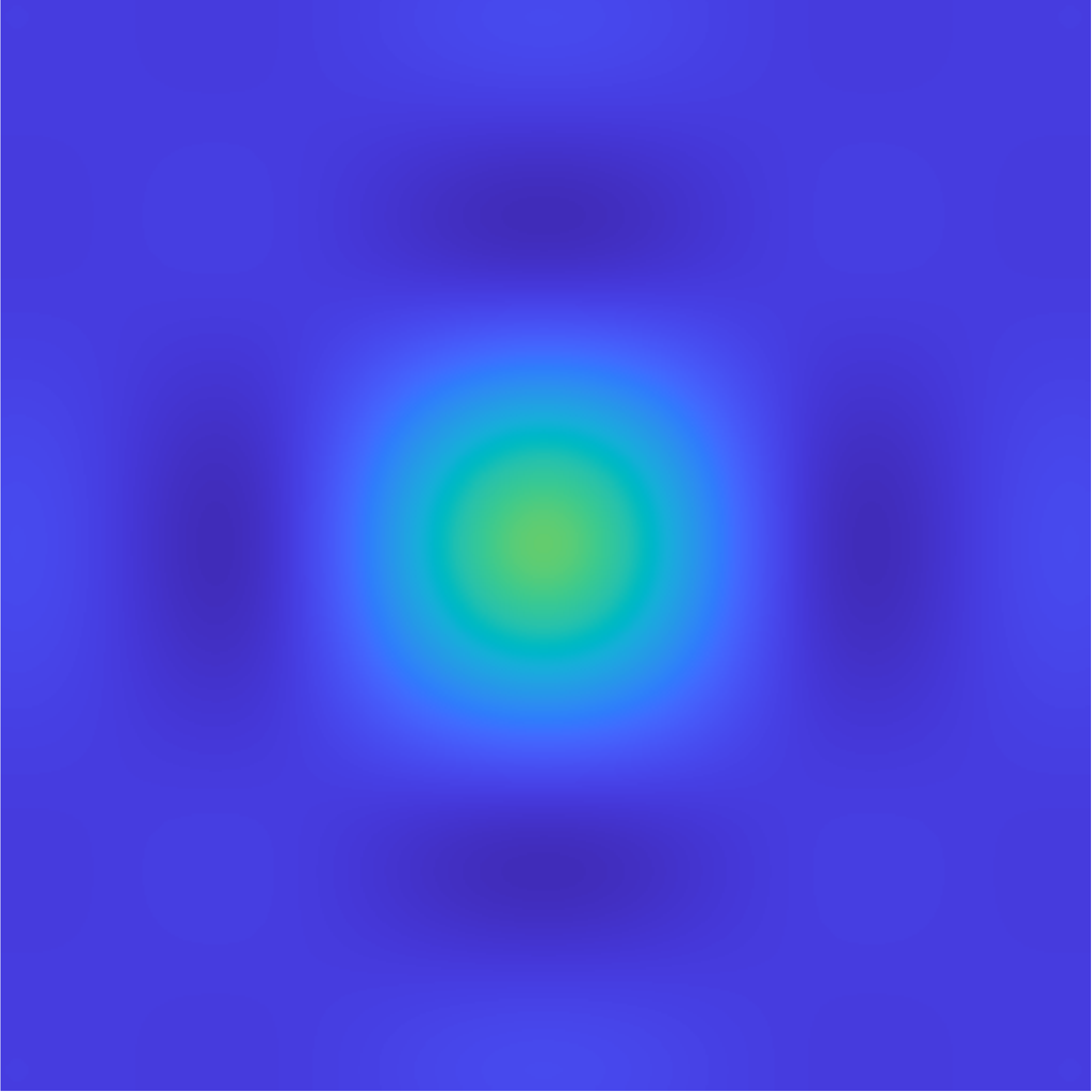}}
	\hspace{0.1mm}
	\subfloat[$f_c=15$]{\includegraphics[width=0.191\textwidth]{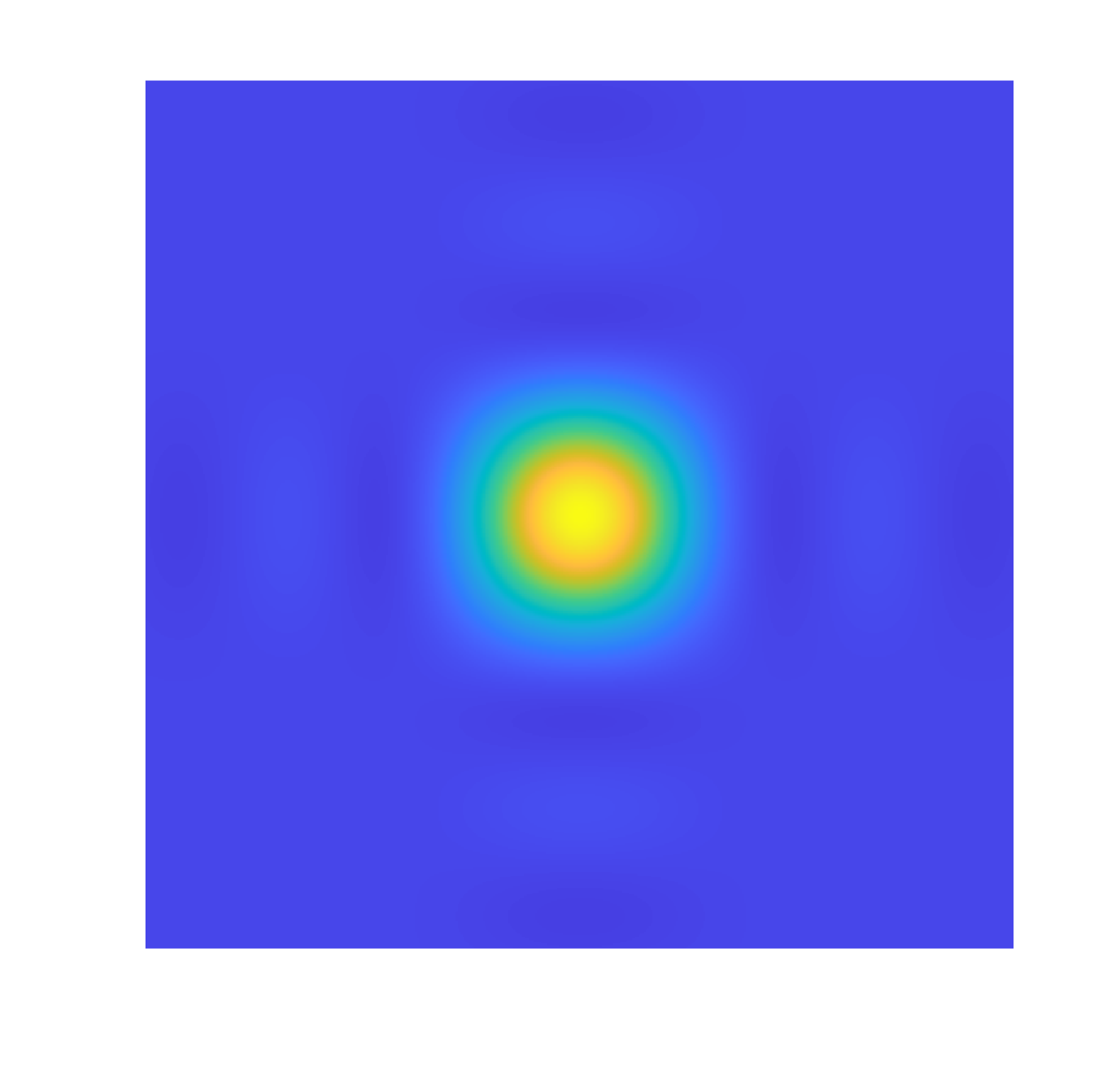}}
	\hspace{0.1mm}
	\subfloat[$f_c=30$]{\includegraphics[width=0.191\textwidth]{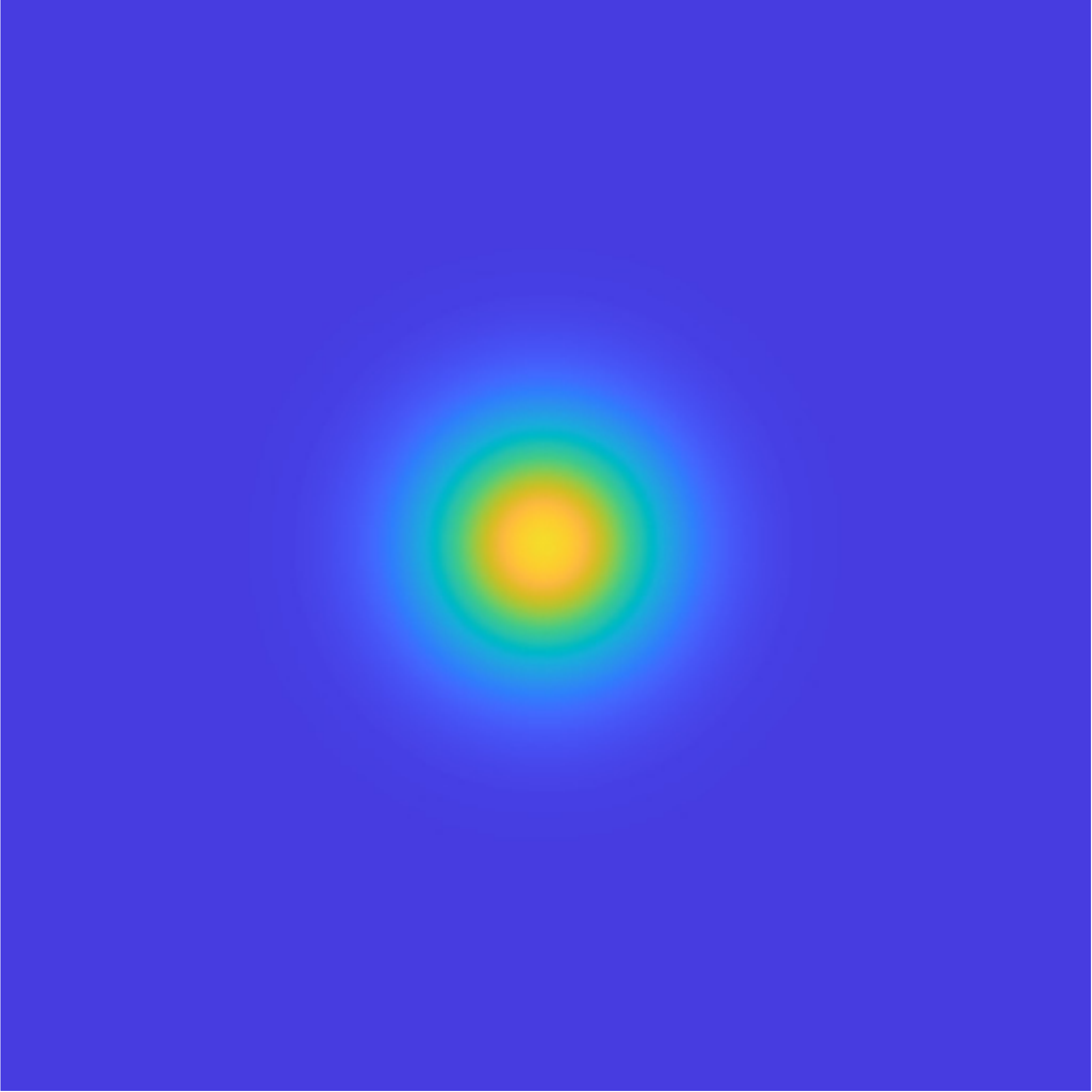}}
	\caption{Evolution of $\Phi_c \de_{x_0}$, $x_0 \in \Torus^2$, for different values of $f_c$, in the Gaussian case. Left image is a true Gaussian convolution (over $\RR^2$).}
	\label{fig:gauss-approx}
\end{figure}


\paragraph{Microscopy} In practical cases, one often only has access to convolution measurements over some sampling grid $\Gg$. In fluorescence microscopy for instance \cite{smlm}, the observations are accurately described as subsampled Gaussian measurements. In that case, given some convolution kernel $\tilde{\phi} \in \Ldeux(\Torus^d)$ (typically a Gaussian), $\phi$ may be defined as
\eql{ \label{eq:kernel-grid}
	\phi(x) = (\tilde{\phi}(s-x))_{s\in\Gg}
}
which leads to
\eq{
	\A = \left(c_k(\tilde{\phi}) e^{2i\pi \dotp{k}{t}}\right)_{t \in \Gg, k \in \Om_c}.
}

\begin{rem}
When the grid $\Gg$ is regular, multiplication by $\A$ or $\A^*$ may be computed efficiently using fast Fourier transforms, see Section~\ref{sec:fft-implemented-A}.
\end{rem}

\paragraph{Foveation} In more general cases, $\phi$ may be defined as
\eq{
	\phi(x) = (\tilde{\phi}(s,x))_{s \in \Gg},
}
in which case the lines of $\A$ consist in the Fourier coefficients of $x \mapsto \tilde{\phi}(s,x)$ at frequencies taken in $\Om_c$. Section~\ref{sec:numerics} studies foveation operators \cite{chang00}, where $\tilde{\phi}$ takes the form
\eql{\label{eq:kernel-fov}
	\tilde{\phi}(s,x) = g( \sigma^{-1}(x) (s - x)), \quad \forall s \in \Gg, \quad \forall x \in \Torus^d
}
for some smoothing function $g$, typically a Gaussian, and some (positive) covariance function $\sigma$. Fig.~\ref{fig:example-measurements} shows an example of Gaussian foveated measurements, for which the matrix $\A$ has no closed-form but may be approximated numerically using the discrete Fourier transform. The approximated foveation kernel is displayed in Fig.~\ref{fig:foveation-kernel}.

\begin{figure}
	\captionsetup[subfigure]{labelformat=empty}
	\subfloat[Foveation]{\includegraphics[width=0.191\textwidth]{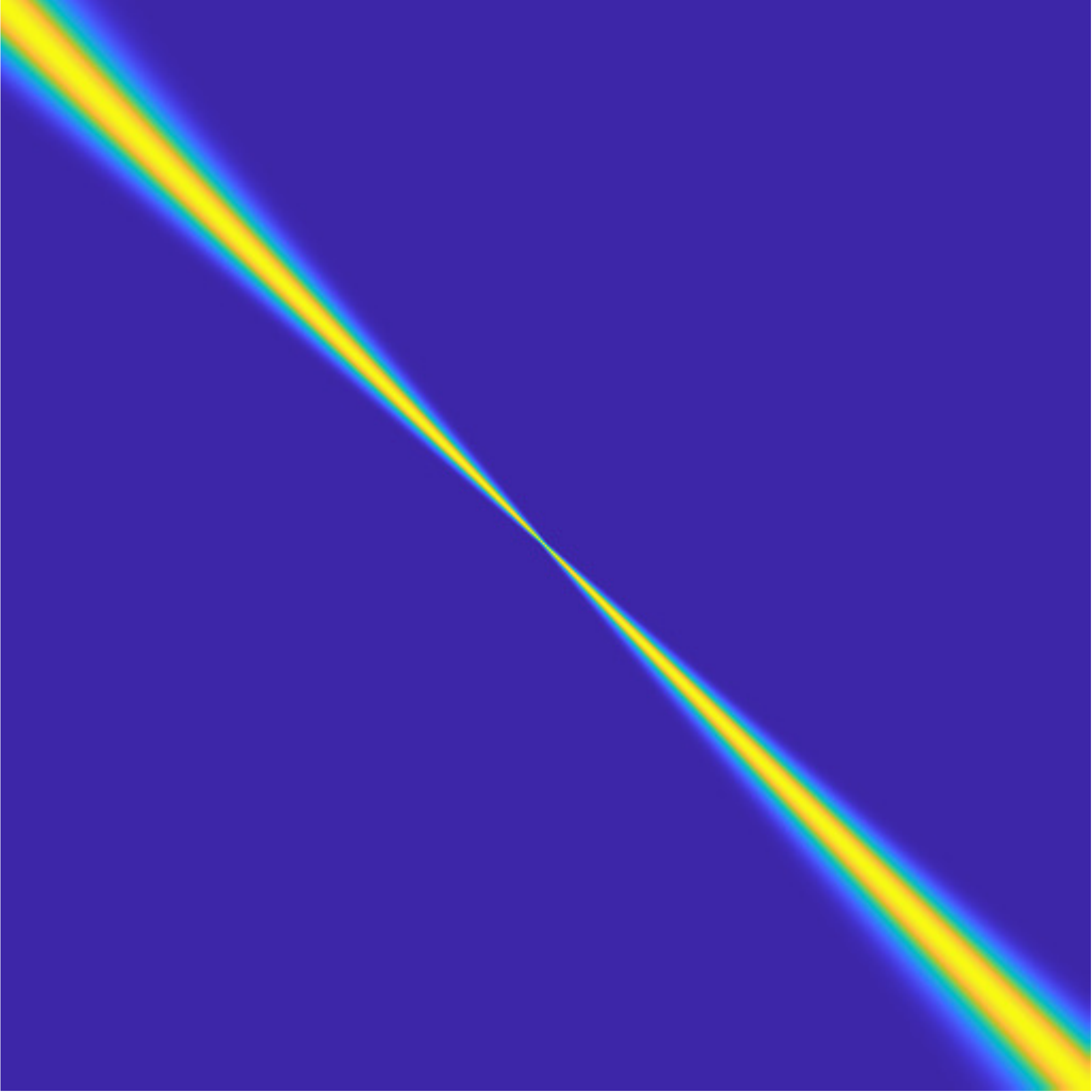}}
	\hspace{2mm}
	\subfloat[$f_c=10$]{\includegraphics[width=0.19\textwidth]{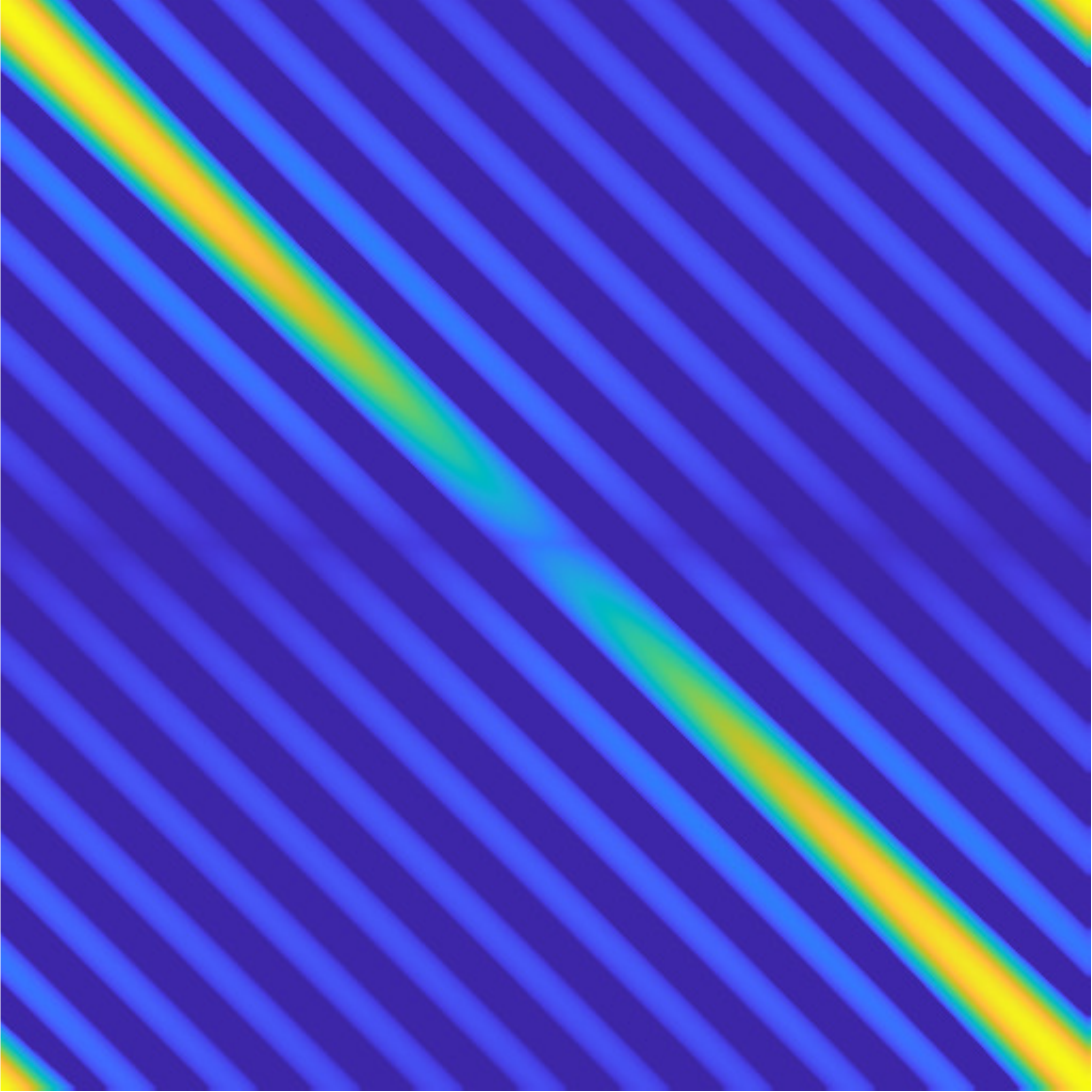}}
	\hspace{0.1mm}
	\subfloat[$f_c=30$]{\includegraphics[width=0.191\textwidth]{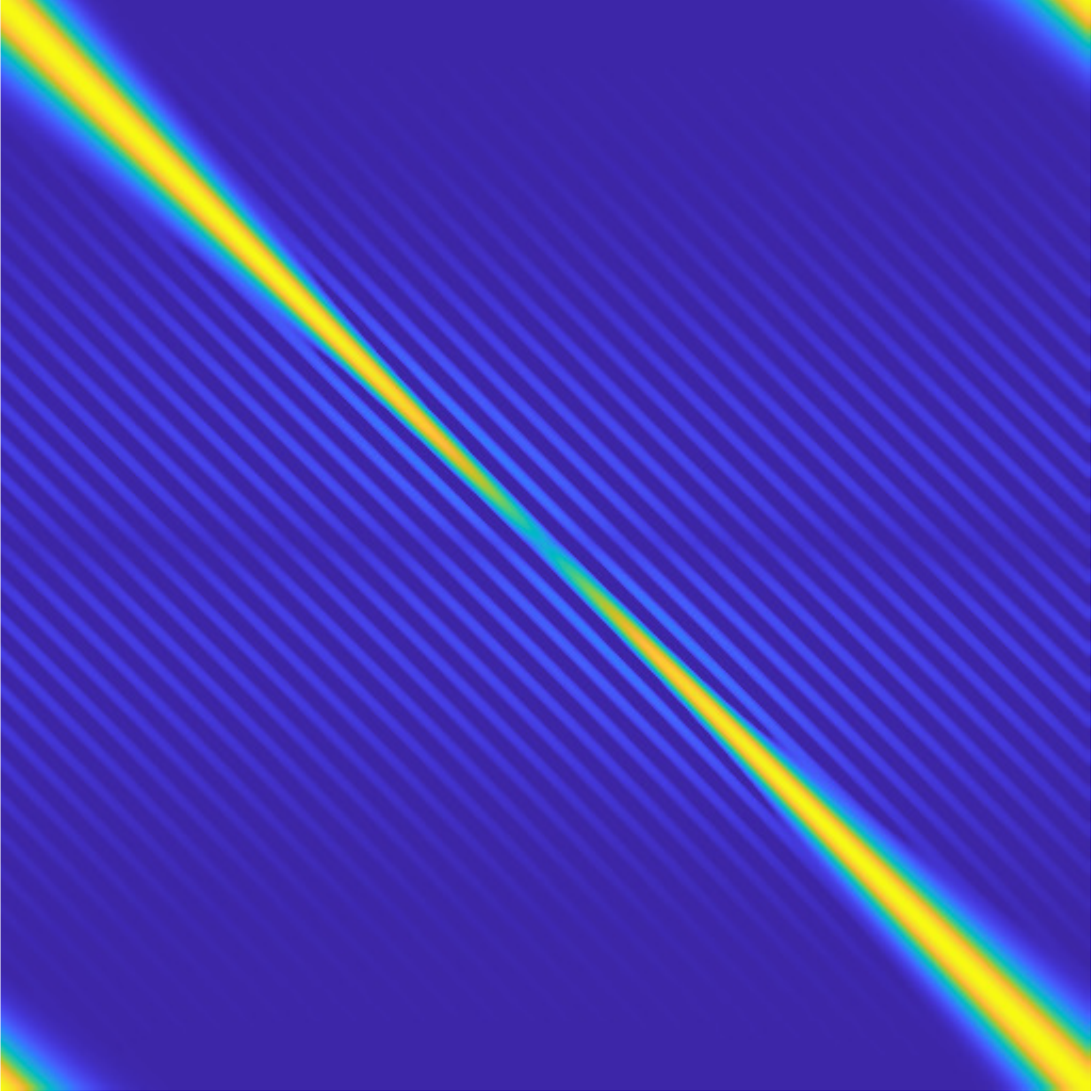}}
	\hspace{0.1mm}
	\subfloat[$f_c=50$]{\includegraphics[width=0.191\textwidth]{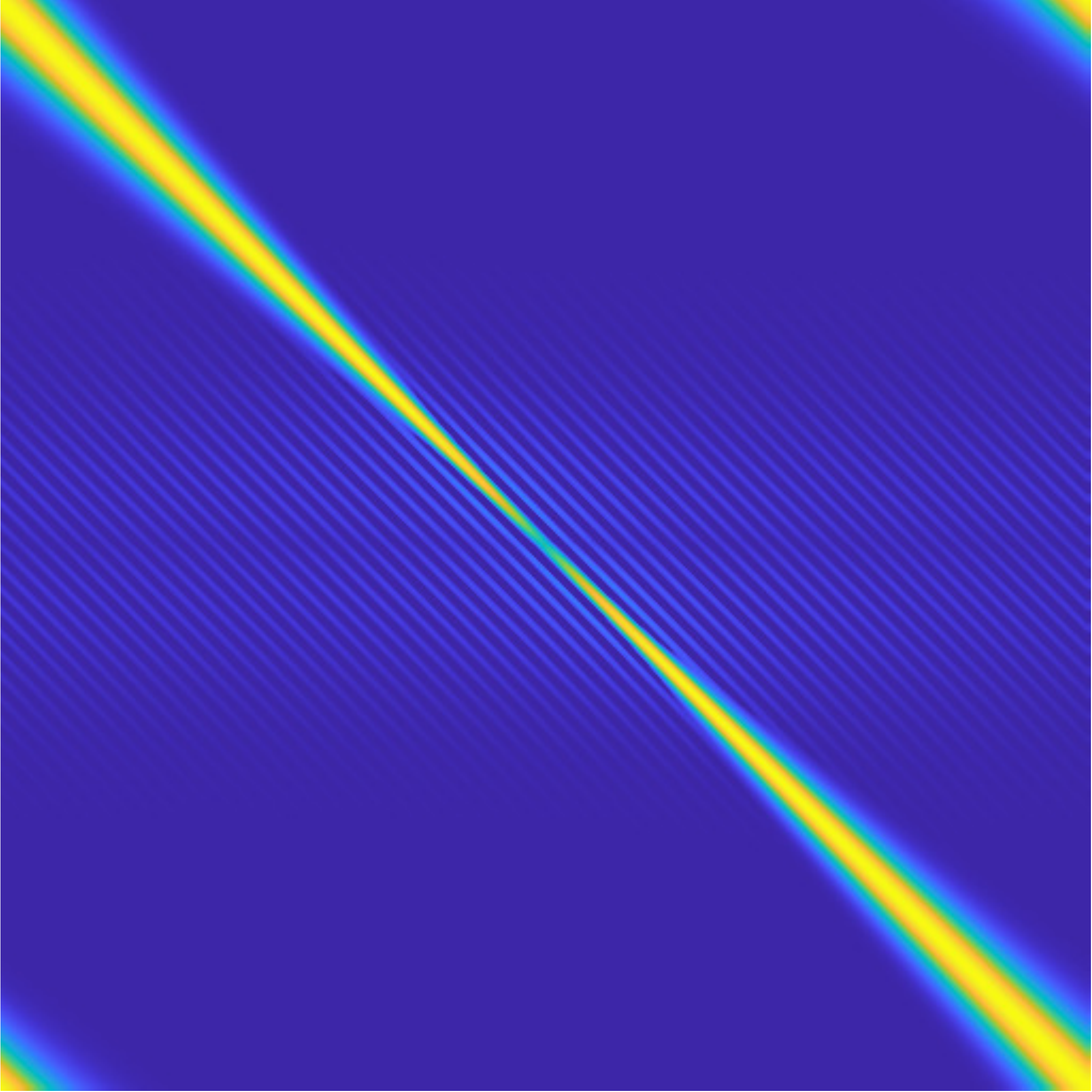}}
	\hspace{0.1mm}
	\subfloat[$f_c=100$]{\includegraphics[width=0.191\textwidth]{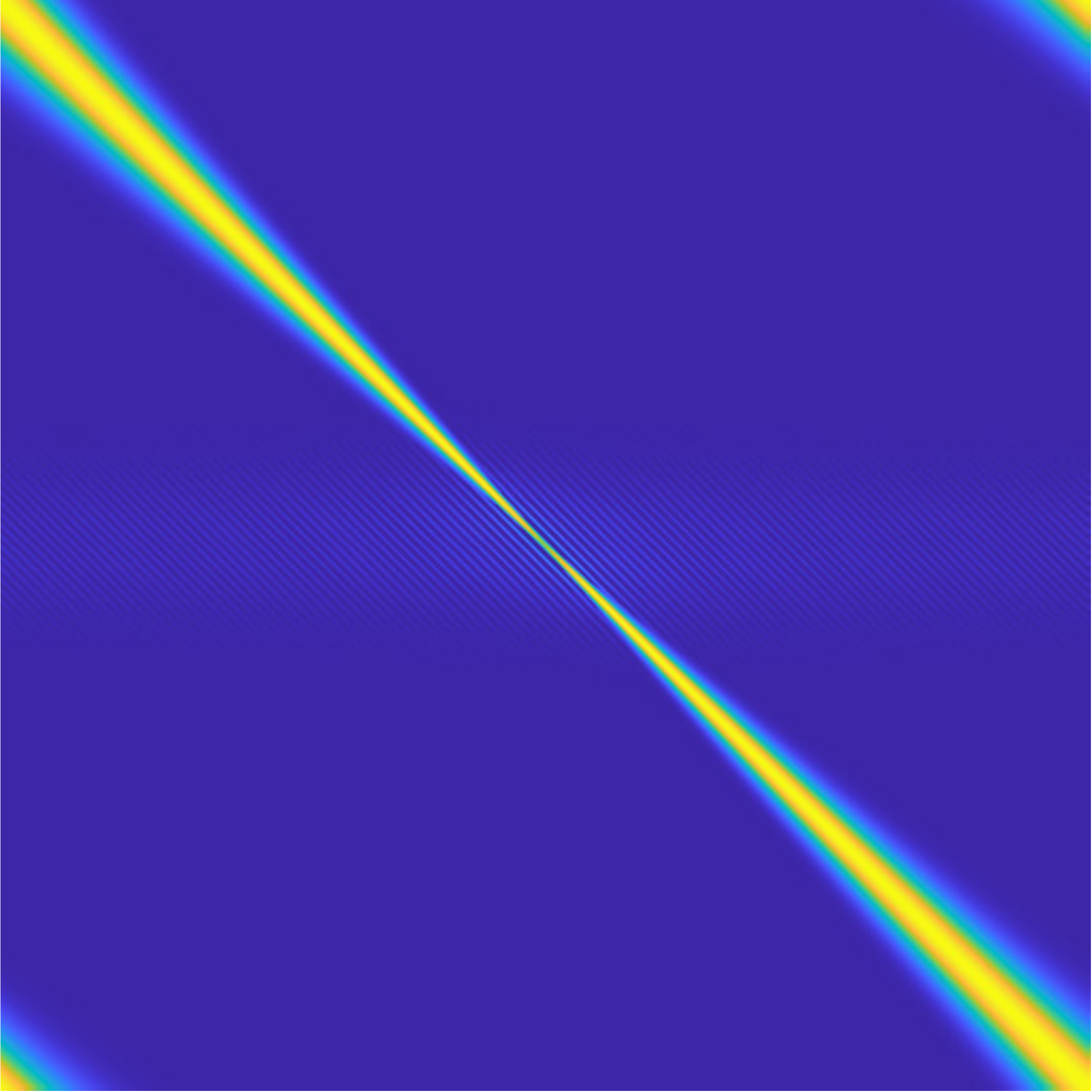}}
	 \caption{\textit{Left}: we display the kernel matrix $(\tilde{\phi}(s,x))_{s,x \in \Gg}$, for the $1$D foveation kernel $\tilde{\phi}$ of the form \eqref{eq:kernel-fov}, with $\Gg$ a regular grid over $\segc{0}{1}$. \textit{Right}: approximated kernel matrices $\A \Ff_c = (\sum_{k\in\Om_c} c_{-k}(\tilde{\phi}_s)e^{-2i\pi kx})_{s,x \in \Gg}$, for different values of $f_c$.}
	\label{fig:foveation-kernel}
\end{figure}


\section{Semidefinite hierarchies}
\label{sec:sdp-hierarchies}

In this section, we generalize the semidefinite programming formulation of the atomic norm used in~\cite{tang13} to the multidimensional case.
As in polynomial optimization~\cite{Lasserre,castro17}, the multidimensional case is much more involved than the one-dimensional one, and needs the introduction of a hierarchy of semidefinite programs.
Loosely speaking, these semidefinite approaches consist in replacing measures with (infinite) moment sequences, assuming they are compactly supported. The so-called \textit{semidefinite hierarchies} then result from truncating these moments, and, in the case where the domain is furthermore \textit{semi-algebraic}, invoking semidefinite characterizations of moment sequences,  see e.g. \cite{curto96}.

First, to make the connection between \eqref{eq:blasso} and the atomic norm minimization problem of~\cite{tang13} explicit, one can see that \eqref{eq:blasso} is actually equivalent to
\eql{
	\umin{z \in \CC^{(2f_c+1)^d}} \frac{1}{2}\normH{y - \A z}^2 + \la \left( \umin{\mu \in \Radon{\Torus^d}} \normtv{\mu}{\Torus^d}\qstq (\Ff\mu)_k = z_k \quad \forall k \in \Om_c \right).
\label{rem:noisy-noiseless}
}
Therefore, given $z \in \CC^{(2f_c+1)^d}$, we focus in this section on the constrained problem

\eq{\label{eq:noiseless}\tag{$\Qq_0(z)$}
	\umin{\mu \in \Radon{\Torus^d}} \normtv{\mu}{\Torus^d} \qstq (\Ff\mu)_k = z_k \quad \forall k \in \Om_c.
}

Note that, by compactness and lower semi-continuity, the above problem has indeed a minimum. Moreover, its value is the so-called \emph{atomic norm} of $z$ introduced in \cite{chandrasekaran12}. The purpose of the present section is to approximate \eqref{eq:noiseless} with problems involving only a finite number of moments of an optimal measure $\mu$ and its absolute value $\abs{\mu}$.

\subsection{Generalized T\oe{}plitz matrices, moment matrices.} 
\label{sec:moment-matrices}
Let $\ell\geq f_c$, and  $\mell\eqdef (2\ell+1)^d$. We assume that some ordering on multi-indices (\ie{} elements of $\Il \eqdef \llbracket -\ell,\ell \rrbracket^d$) has been chosen (for instance the colexicographic order). 

\begin{defn}[Generalized T\oe{}plitz matrix] We say that $R\in \CC^{\mell\times \mell}$ is a \emph{generalized T\oe{}plitz matrix} (also called T\oe{}plitz-block T\oe{}plitz, or mulitlevel T\oe{}plitz), denoted by $R\in \Tomat$, if for every multi-indices $i,j,k\in \llbracket -\ell,\ell \rrbracket^d$  such that $\normi{i+k}\leq \ell$ and $\normi{j+k}\leq \ell$,
\eql{\label{eq:rtoep}%
  R_{i+k,j+k}=R_{i,j}.
}
\end{defn}
If $R$ is the trigonometric moment matrix of some measure $\mu$, it obviously satisfies $R\in \Tomat$, as
\eq{ R_{i+k,j+k}=\int_{\Torus^d}e^{-2\imath\pi \dotp{i+k}{x}}e^{2\imath\pi \dotp{j+k}{x}}\text{d}\mu(x)=\int_{\Torus^d}e^{-2\imath\pi \dotp{i}{x}}e^{2\imath\pi \dotp{j}{x}}\text{d}\mu(x)=R_{i,j}.}

If the ordering on the multi-indices is colexicographical, the generalized Toeplitz property rewrites
\eql{\label{eq:gen-toep-colex}
  R = \sum_{k \in \llbracket -2\ell,2\ell \rrbracket^d} u_k \Theta_k
}%
where $\Theta_k = \theta_{k_d} \otimes \ldots \otimes \theta_{k_1}  \in \CC^{\mell \times \mell}$. Here $\theta_{k_j}$ denotes the $(2\ell+1)\times (2\ell+1)$ T\oe{}plitz matrix with ones on its $k_j$-th diagonal and zeros everywhere else, and $\otimes$ stands for the Kronecker product. For instance, with $2\times 2$ matrices, for $d=2$ and $k = (-1, 0)$, one has
\eq{ 		
	\Theta_k = \begin{bmatrix} 1 & 0 \\ 0 & 1\\ \end{bmatrix} \otimes \begin{bmatrix} 0 & 0\\ 1 & 0 \end{bmatrix} = \begin{bmatrix} 									0 & 0 & 0 & 0\\ 1 & 0 & 0 & 0\\
								0 & 0 & 0 & 0 \\ 0 & 0 & 1 & 0\\
				 \end{bmatrix}.
       }


\subsection{Studied relaxation}
\label{sec:noiseless-relaxation}
       Now, we consider for $\ell\geq f_c$, and  $\mell\eqdef (2\ell+1)^d$,
\eq{\label{eq:relaxnoiseless}\tag{$\Qq_0^{(\ell)}(z)$}
	\begin{aligned}
    \min_{\substack{R\in \Herm_{\mell}^+,\\ \tilde{z} \in \CC^{\mell}, \, \tau \in \RR}}
    &\frac{1}{2}\left(\frac{1}{\mell}\Tr(R) +\tau\right) \qstq
			&\left\{
					\begin{array}{lll}
						(a) & \begin{bmatrix} R & \tilde{z} \\ \tilde{z}^* & \tau \end{bmatrix} \succeq 0 \\
            (b)	&\tilde{z}_k = z_k, \quad \forall k \in \Om_c\\
            (c) & R \in \Tomat
					\end{array}
				\right.
	\end{aligned}.
}%
In the rest of the paper, we write
\eql{\label{eq:block-matrix}
	\Rr \eqdef \begin{bmatrix} R & \tilde{z} \\ \tilde{z}^* & \tau \end{bmatrix}.
}

\begin{rem}[Alternative form]
  In fact, as noted in \cite{polisano17}, it is possible to show by an homogeneity argument that the term $\tau$ must be chosen equal to $\frac{1}{\mell}\Tr(R)$. Indeed the positive semi-definiteness constraint in (a) is equivalent to the following three conditions
 (see for instance~\cite{curto96})
  \begin{enumerate}
    \item   $R\succeq 0$,
    \item there exists some $\alpha\in \CC^{\mell}$ such that $R\alpha = \tilde{z}$,
    \item $\tau \geq \dotp{\alpha}{R\alpha}$.
  \end{enumerate}
  Therefore, at optimality, $\tau=\dotp{\alpha}{R\alpha}$, and replacing $\alpha$, $R$ with $t\alpha$, $1/tR$ for $t>0$ yields another feasible point with energy $\frac{1}{2}\left(\frac{1}{t \mell}\Tr(R) +t\tau\right)$. Minimizing that quantity over $t$ yields the equality of the two terms.
Therefore, \eqref{eq:relaxnoiseless} is equivalent to \eq{\tag{$\tilde{\Qq}_0^{(\ell)}(z)$}
	\begin{aligned}
    \min_{\substack{R\in \Hh_{m}^+,\\ z \in \CC^{\mell}}}\quad
    &\frac{1}{\mell}\Tr(R) \qstq
			&\left\{
					\begin{array}{lll}
          (a') & \begin{bmatrix} R & \tilde{z} \\ \tilde{z}^h & \frac{1}{\mell}\Tr(R) \end{bmatrix} \succeq 0\\
            (b)	&\tilde{z}_k = z_k, \quad \forall k \in \Om_c\\
            (c) & R \in \Tomat
					\end{array}
				\right.
	\end{aligned}.
}%

Incidentally, notice that at optimality the rank of the large matrix in (a) and (a') is equal to $\rank(R)$.  
\end{rem}

The following result explains that \eqref{eq:relaxnoiseless} defines a relaxation of \eqref{eq:noiseless}.
\begin{prop}\label{prop:relaxation}
  Let $z\in \CC^{(2f_c+1)^d}$. For any $\ell\geq f_c$,
  \eql{%
    \min \eqref{eq:relaxnoiseless} \leq  \min (\Qq_0^{(\ell+1)}(z)) \leq  \min \eqref{eq:noiseless}.
  }
  Moreover, $\lim_{\ell\rightarrow +\infty}\min \eqref{eq:relaxnoiseless} = \min \eqref{eq:noiseless}$. 
\end{prop}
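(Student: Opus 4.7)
The plan is to prove the three claims---monotonicity, upper bound, and convergence---in that order.

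For the monotonicity $\min (\Qq_0^{(\ell)}(z)) \leq \min (\Qq_0^{(\ell+1)}(z))$, I start from any feasible triple $(R,\tilde z,\tau)$ at level $\ell+1$ and restrict it to level $\ell$: let $R'$ be the principal submatrix of $R$ indexed by $\Il \subset I_{\ell+1}$, and $\tilde z'$ the truncation of $\tilde z$ to $\Il$. The block $\left[\begin{smallmatrix} R' & \tilde z' \\ (\tilde z')^* & \tau\end{smallmatrix}\right]$ is a principal submatrix of $\Rr$ and therefore PSD; the Toeplitz relation~\eqref{eq:rtoep} on $R$ survives restriction to $\Il$; and $\Om_c\subset \Il$ preserves constraint (b). The Toeplitz property moreover forces $R'_{0,0}=R_{0,0}$, so $\frac{1}{(2\ell+1)^d}\Tr(R')=R_{0,0}=\frac{1}{(2\ell+3)^d}\Tr(R)$, and the two energies coincide exactly.

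For the upper bound $\min (\Qq_0^{(\ell+1)}(z)) \leq \min \eqref{eq:noiseless}$, I lift any feasible $\mu\in \Radon{\Torus^d}$ for~\eqref{eq:noiseless} to a feasible triple of equal energy $\normtv{\mu}{\Torus^d}$. Using the polar decomposition $\mu = u\cdot \abs{\mu}$ with $\abs{u}=1$ $\abs{\mu}$-a.e., and $v(x)\eqdef (e^{-2i\pi\dotp{k}{x}})_{k\in I_{\ell+1}}$, I set
\begin{align*}
  R \eqdef \int_{\Torus^d} v(x)\, v(x)^* \diff\abs{\mu}(x),\quad \tilde z\eqdef \int_{\Torus^d} u(x)\, v(x) \diff\abs{\mu}(x),\quad \tau\eqdef \normtv{\mu}{\Torus^d}.
\end{align*}
Then $\Rr$ is an integral of rank-one PSD matrices hence PSD; the entries $R_{j,k}=c_{j-k}(\abs{\mu})$ make $R$ generalized Toeplitz; $\tilde z_k=c_k(\mu)=z_k$ on $\Om_c$; and $\tau=\normtv{\mu}{\Torus^d}=c_0(\abs{\mu})=\frac{1}{(2\ell+3)^d}\Tr(R)$, so the energy equals $\normtv{\mu}{\Torus^d}$. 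Optimizing over $\mu$ gives the inequality.

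For the convergence, the two previous steps already give $L\eqdef \lim_\ell \min (\Qq_0^{(\ell)}(z))\leq \min \eqref{eq:noiseless}$, and I need the reverse. Take near-optimal triples $(R_n,\tilde z^{(n)},\tau_n)$ for $\Qq_0^{(\ell_n)}(z)$ with $\ell_n\to\infty$ and energies $E_n\to L$. Writing $R_n$ via~\eqref{eq:gen-toep-colex} with Toeplitz sequence $u^{(n)}=(u^{(n)}_s)_s$, positive semidefiniteness of $R_n$ gives $\abs{u^{(n)}_s}\leq u^{(n)}_0\leq 2E_n$, and the $2\!\times\!2$ minor of $\Rr_n\succeq 0$ with indices $(k,\text{last})$ gives $\abs{\tilde z^{(n)}_k}^2\leq \tau_n\, u^{(n)}_0$. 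A diagonal extraction yields pointwise limits $u_s$, $\tilde z_k$, and $\tau_n\to \tau_\infty$, all bounded in the appropriate sense by $L$. The sequence $(u_s)_{s\in\ZZ^d}$ is positive definite as a pointwise limit of positive definite finite sections, so Bochner's theorem on $\Torus^d$ yields a positive measure $\nu\in\Radon{\Torus^d}_+$ with $c_s(\nu)=u_s$ and $\nu(\Torus^d)=u_0\leq L$.

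The main obstacle is producing a complex measure $\mu$ with $c_k(\mu)=\tilde z_k$ from this limit data. I would exploit the Schur-complement inequality $\bigl|\sum_k p_k \tilde z^{(n)}_k\bigr|^2\leq \tau_n\, p^*R_n p$, valid for any trigonometric polynomial $p(x)=\sum_k p_k e^{2i\pi\dotp{k}{x}}$ eventually supported in $I_{\ell_n}$, and pass it to the limit on any fixed $p$ to obtain
\begin{align*}
  \Bigl|\sum_k p_k\, \tilde z_k\Bigr|^2 \leq \tau_\infty \int_{\Torus^d}\abs{p(x)}^2\diff\nu(x)\leq \tau_\infty\,\normi{p}^2\,\nu(\Torus^d).
\end{align*}
Hence $p\mapsto \sum_k p_k\tilde z_k$ is $\normi{\cdot}$-continuous on the dense subspace of trigonometric polynomials, extends continuously to $\Cont{\Torus^d}$, and by the Riesz representation theorem is represented by a measure $\mu\in\Radon{\Torus^d}$ with $c_k(\mu)=\tilde z_k$ (up to the sign convention for $\Ff$). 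Since $\tilde z^{(n)}_k=z_k$ on $\Om_c$ for every $n$, $\mu$ is feasible for~\eqref{eq:noiseless}. Finally, AM--GM on the Schur bound gives $\normtv{\mu}{\Torus^d}\leq \sqrt{\tau_\infty\, \nu(\Torus^d)}\leq \tfrac12(\tau_\infty+\nu(\Torus^d))\leq L$, so $\min \eqref{eq:noiseless}\leq L$. The genuinely hard step is this Riesz extension: Bochner handles the ``modulus'' measure $\nu$ cleanly, but extracting the complex measure $\mu$ and controlling $\normtv{\mu}{\Torus^d}$ by $L$ is exactly what the off-diagonal coupling between $R$ and $\tilde z$ in the block matrix encodes.
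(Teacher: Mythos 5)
Your proof is correct. The first two steps (restricting a feasible triple from order $\ell+1$ to order $\ell$ and noting the normalized traces agree, and lifting a feasible measure via its polar decomposition to a feasible triple of energy $\normtv{\mu}{\Torus^d}$) are essentially identical to the paper's. Where you genuinely depart from the paper is the convergence step: the paper works on the dual side, invoking strong duality for \eqref{eq:noiseless} and for \eqref{eq:relaxnoiseless}, and the bounded real lemma of Dumitrescu to produce, for every $\varepsilon>0$, a feasible dual certificate at some finite order with value at least $(1-\varepsilon)\min\eqref{eq:noiseless}$; you instead argue on the primal side, extracting pointwise limits of near-optimal feasible triples, applying the Herglotz--Bochner theorem to the limiting generalized T\oe{}plitz data to obtain the positive measure $\nu$, and then using the Schur-complement inequality together with density of trigonometric polynomials and the Riesz representation theorem to build a complex measure $\mu$ with the prescribed low-frequency coefficients and $\normtv{\mu}{\Torus^d}\le\tfrac12(\tau_\infty+\nu(\Torus^d))\le L$. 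Both routes are valid: yours is more self-contained (no SDP duality, no bounded real lemma) and actually constructs a minimizer of \eqref{eq:noiseless} realizing the limit value, while the paper's is shorter given the cited results and produces explicit dual certificates at finite order, which connects naturally with the later discussion of collapsing and flatness. One small slip: the intermediate claim $\nu(\Torus^d)=u_0\le L$ is not justified (each of $u_0$ and $\tau_\infty$ is only bounded by $2L$; what holds is $u_0+\tau_\infty\le 2L$ along the subsequence), but this is harmless since your final AM--GM step only uses $\tfrac12(\tau_\infty+\nu(\Torus^d))\le L$, which is exactly that bound; a careful write-up should also fix the Fourier sign convention when identifying $c_k(\mu)$ with $\tilde z_k$.
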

  The proof is a straightforward adaptation of the approach used in real polynomial optimization using Lasserre hierarchies~\cite{lasserre01}. We include it for the sake of completeness.
\begin{proof}
  First, we note that if $\tau$, $R'$ and $z'$ are feasible for $(\Qq_0^{(\ell+1)}(z))$, then $\tau$, $R$  and $\tilde{z}$ are feasible  for \eqref{eq:relaxnoiseless}, where $R$  and $z$ respectively denote the restrictions of $R'$ and $z'$ to $\Il$.  Since $R'$ is constant on its diagonals, we get $\frac{1}{(2\ell+3)^d}\Tr(R') =\frac{1}{\mell}\Tr(R)$. That yields the first inequality.

  Now, for the second inequality, let $\mu\in \Mm(\Torus^d)$ such that $\cfour{\mu}{k} = z_k$, and $\xi(x)\eqdef \frac{\text{d}\mu}{\text{d}\abs{\mu}}(x)$ be its sign (defined $\abs{\mu}$ almost everywhere). For any $\ell\geq f_c$, consider $R\in \CC^{\mell\times \mell}$, $z\in \CC^{\mell}$ and $\tau$  defined by
  \begin{align}\label{eq:Rmoment}
    R_{i,j}= \int_{\Torus^d}e^{-2\imath\pi \dotp{i-j}{x}}\text{d}\abs{\mu}(x), \quad   z_j= \int_{\Torus^d}e^{-2\imath\pi \dotp{j}{x}}\text{d}\mu(x)  \qandq \tau=\abs{\mu}(\Torus^d).
  \end{align}
  It is immediate that $(b)$ and $(c)$ are satisfied. Moreover, for all $p\in \CC^{\mell}$, $q\in \CC$,
  \begin{align*}
&  \begin{bmatrix}
    p\\q
  \end{bmatrix}^* \begin{bmatrix} R & z \\ z^h & \tau \end{bmatrix}\begin{bmatrix}
     p\\ q
   \end{bmatrix}=\sum_{i,j} p_i^*R_{i,j}p_j+ 2\sum_{j}\Re(q^* \dotp{\tilde{z}_j}{p_j}) + \tau \abs{q}^2\\
                &\quad= \int_{\Torus^d} \left(\abs{\sum_j p_j e^{2\imath\pi \dotp{j}{x}}}^2 +2\Re(q^*\xi^*(x)\sum_j p_je^{2\imath\pi\dotp{j}{x}}) +\abs{q}^2\right) \diff\abs{\mu}(x)\\
                &\quad= \int_{\Torus^d} \abs{\sum_j p_j e^{2\imath\pi \dotp{j}{x}} +q\xi(x)}^2  \diff\abs{\mu}(x)\geq 0,
  \end{align*}
  which yields $(a)$.
	As a result, $R$, $z$, $\tau$ is admissible for \eqref{eq:relaxnoiseless} with energy 
	$$\frac{1}{2}\left(\frac{1}{\mell}\Tr(R)+\tau\right) =\abs{\mu}(\Torus^d),$$ hence $\inf \eqref{eq:relaxnoiseless} \leq \inf \eqref{eq:noiseless}$.

To prove that the limit of the sequence is indeed $(\min\eqref{eq:noiseless})$, let us consider the dual problem to \eqref{eq:noiseless},
\eql{\label{eq:dual-noiseless}\tag{$\Dd_0(z)$}%
  \sup_{p\in \CC^{(2f_c+1)^d}} \Re \dotp{p}{z} \quad \text{s.t.}\quad \normi{\Ff_c^* p}\leq 1 
}
It is possible to check that \eqref{eq:dual-noiseless} always has a solution~\cite{duval15} and that strong duality holds (see for instance~\cite{candes14}), $\max \eqref{eq:dual-noiseless} = \min \eqref{eq:noiseless}$.

On the other hand, one may show that a dual problem to~\eqref{eq:relaxnoiseless} is given by
\eql{\label{eq:dualrelax}%
  \sup_{\substack{Q\in \Hh_{\mell}^+,\\ p\in \CC^{n_c}}} \Re \dotp{p}{z} \quad 		\text{s.t.} \quad \left\{
					\begin{array}{lll}
						(a) & \begin{bmatrix} Q & \tilde{p} \\ \tilde{p}^* & 1 \end{bmatrix} \succeq 0,\\
            					(b) &\tilde{p}_k = \left\{
								\begin{array}{ll}
									p_k	& \text{if} \quad k \in \Om_c \\
									0 	& \text{if} \quad k \in \Om_\ell \setminus\Om_c
								\end{array}
							\right. \\
            					(c) & Q-\frac{1}{\mell}\mathrm{I}_\mell \in \Tomat^\perp
					\end{array}
				\right.
        \tag{$\Dd_0^{(\ell)}(z)$}
}
where $\Tomat^\perp$ is the orthogonal complement to $\Tomat$, \ie{} 

\eql{%
  Q\in \Tomat^\perp \quad\Longleftrightarrow\quad \forall k\in \llbracket -2\ell,2\ell \rrbracket^d, \quad  \sum_{\substack{i,j\in \llbracket-\ell,\ell\rrbracket^d\\ i+j=k}} Q_{i,j}=0.
}
As before, there exists a solution to \eqref{eq:dualrelax} and $\max \eqref{eq:dualrelax} = \min \eqref{eq:relaxnoiseless}$.

Now, let $\varepsilon>0$, let $p$ be a solution to \eqref{eq:dual-noiseless} and let $p_\varepsilon\eqdef (1-\varepsilon)p$. Since  $\normi{\Ff_c^* p_\varepsilon}< 1$, the bounded real lemma~\cite[Corollary 4.25]{Dumitrescu} ensures that there exists $\ell\geq f_c$, a matrix $Q\in \Hh_{\mell}^+$ with $\mell=(2\ell+1)^d$ such that $Q-\frac{1}{\mell}\mathrm{I}_\mell \in \Tomat^\perp$ and
\eq{%
\begin{bmatrix} Q & \tilde{p}_\varepsilon \\ \tilde{p}_\varepsilon^h & 1 \end{bmatrix} \succeq 0,}
  where $\tilde{p}_\varepsilon$ extends $p_\varepsilon$ in the sense that $\tilde{p}_{\varepsilon,k} =p_{\varepsilon,k}$ for all $k \in \Om_c$, $0$ otherwise.
  As a result, $Q$ and $\tilde{p}_\varepsilon$ are admissible for~\eqref{eq:dualrelax}, hence
  \eq{%
    \min \eqref{eq:relaxnoiseless} = \max \eqref{eq:dualrelax} \geq (1-\varepsilon)\max \eqref{eq:dual-noiseless} = (1-\varepsilon) \min \eqref{eq:noiseless},
  }
  which yields the claimed convergence.
\end{proof}

\subsection{Tightness of the relaxation and low rank property}
\label{sec:noiseless-collapsing}
\label{sec:collapse-lowrank}
The next proposition discusses the equality case between \eqref{eq:relaxnoiseless} and \eqref{eq:noiseless}, referred to as \textit{collapsing} of the hierarchy, by interpreting $R$ as a moment matrix.
\begin{prop}\label{prop:tight}
Let $\ell\geq f_c$. Then, $\min \eqref{eq:relaxnoiseless} = \min \eqref{eq:noiseless}$ if and only if there exists $(R,z,\tau)$ solution to \eqref{eq:relaxnoiseless}  and $\mu$ solution to \eqref{eq:noiseless} such that
\eql{\label{eq:momentR}%
	\tau=\abs{\mu}(\Torus^d) \qandq R_{i,j}= \int_{\Torus^d}e^{-2\imath\pi\dotp{i-j}{x}}\d\abs{\mu}(x)
}
for all $i,j\in \Il$. In particular, if $\mu$ is a discrete measure with cardinal $r$, then $\rank R\leq r$.
\end{prop}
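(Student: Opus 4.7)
\medskip

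\noindent\textbf{Proof plan.} The strategy is to reuse the construction from the proof of Proposition~\ref{prop:relaxation}, which already produces, from any measure $\mu$ admissible for \eqref{eq:noiseless}, a triple $(R,\tilde z,\tau)$ admissible for \eqref{eq:relaxnoiseless} via formulas~\eqref{eq:Rmoment}. The only new ingredient needed is to identify the value of the relaxed energy at such a triple. Since $R_{i,i}=\int_{\mathbb T^d}d\abs{\mu}=\abs{\mu}(\mathbb T^d)=\tau$ for every $i\in I_\ell$, one has $\frac{1}{m_\ell}\mathrm{Tr}(R)=\tau$, so the objective of \eqref{eq:relaxnoiseless} at this triple equals $\tau=\abs{\mu}(\mathbb T^d)=\|\mu\|_{TV}$. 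This observation will drive both implications.

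For the ``if'' direction, assume such a pair $(R,\tilde z,\tau)$ and $\mu$ exists. Then the value of \eqref{eq:relaxnoiseless} at $(R,\tilde z,\tau)$ equals $\abs{\mu}(\mathbb T^d)=\min \eqref{eq:noiseless}$, and combined with the inequality $\min \eqref{eq:relaxnoiseless}\leq \min \eqref{eq:noiseless}$ from Proposition~\ref{prop:relaxation}, we get equality of the two minima. For the ``only if'' direction, suppose the minima coincide. Pick any solution $\mu$ of \eqref{eq:noiseless} (which exists by weak-$*$ compactness and lower semicontinuity of the total variation) and define $(R,\tilde z,\tau)$ by \eqref{eq:Rmoment}. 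The proof of Proposition~\ref{prop:relaxation} shows that $(R,\tilde z,\tau)$ is admissible for \eqref{eq:relaxnoiseless}, and by the computation above its energy is $\abs{\mu}(\mathbb T^d)=\min \eqref{eq:noiseless}=\min \eqref{eq:relaxnoiseless}$, so this triple is optimal and the pair satisfies the required moment relation.

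For the rank statement, write $\mu=\sum_{k=1}^r a_k\delta_{x_k}$, so that $\abs{\mu}=\sum_{k=1}^r \abs{a_k}\delta_{x_k}$. Substituting in \eqref{eq:momentR} yields
\begin{equation*}
R_{i,j}=\sum_{k=1}^r \abs{a_k}\,e^{-2\imath\pi\dotp{i}{x_k}}\,\overline{e^{-2\imath\pi\dotp{j}{x_k}}},
\end{equation*}
so that $R=V\,\mathrm{diag}(\abs{a_k})_{1\leq k\leq r}\,V^*$, where $V\in\mathbb C^{m_\ell\times r}$ has entries $V_{i,k}=e^{-2\imath\pi\dotp{i}{x_k}}$. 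Hence $\mathrm{rank}\,R\leq r$.

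\medskip

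\noindent I do not anticipate any significant obstacle: both implications reduce to evaluating the relaxed energy at the canonical moment lift of $\mu$, and the trace/$\tau$ coincidence is immediate from the generalized Toeplitz structure of $R$ imposed by constraint $(c)$. The only point requiring a little care is the existence of an optimizer of \eqref{eq:noiseless}, already guaranteed in the text by compactness and lower semicontinuity.
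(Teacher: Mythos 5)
Your proof is correct and takes essentially the same route as the paper's: evaluate the relaxed objective at the moment lift \eqref{eq:Rmoment} of a solution of \eqref{eq:noiseless} (using $\frac{1}{\mell}\Tr(R)=\tau=\abs{\mu}(\Torus^d)$), invoke its admissibility from the proof of Proposition~\ref{prop:relaxation}, and obtain the rank bound from the factorization $R=V\,\Diag(\abs{a_k})\,V^*$. The only cosmetic remark is that in the ``if'' direction the equality of the minima follows directly from optimality of the triple (its value is $\min\eqref{eq:relaxnoiseless}$ and equals $\abs{\mu}(\Torus^d)=\min\eqref{eq:noiseless}$), so the extra appeal to the inequality $\min\eqref{eq:relaxnoiseless}\leq\min\eqref{eq:noiseless}$ is redundant.
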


\begin{proof}
  Assume that $\min \eqref{eq:relaxnoiseless} = \min \eqref{eq:noiseless}$, and let $\mu$ be a solution to \eqref{eq:noiseless}. Define $R$, $z$ and $\tau$ by~\eqref{eq:Rmoment}. As in the proof of Proposition~\ref{prop:relaxation}, we see that 
  $(R,z,\tau)$ is admissible for \eqref{eq:relaxnoiseless}, with energy
  \eq{%
\frac{1}{2}\left(\frac{1}{\mell}\Tr(R)+\tau\right) =\abs{\mu}(\Torus^d)= \min\eqref{eq:relaxnoiseless}.
}
Hence $(R,z,\tau)$ is a solution to \eqref{eq:relaxnoiseless}.

The converse implication is straightforward: if $(R,z,\tau)$ (resp.  $\mu$) is a solution to  \eqref{eq:relaxnoiseless} (resp. \eqref{eq:noiseless}) such that \eqref{eq:momentR} holds, then $\frac{1}{2}\left(\frac{1}{\mell}\Tr(R)+\tau\right) =\abs{\mu}(\Torus^d)$ and we obtain $\min \eqref{eq:relaxnoiseless} = \min \eqref{eq:noiseless}$.
  
If $R$ satisfies~\eqref{eq:momentR} and $\mu$ has cardinal $r$, \ie{} $\mu=\sum_{i=1}^r a_i \delta_{x_i}$ with $x_i\neq x_j$ for $i\neq j$, we note that this matrix $R$ is of the form
\eq{%
  R= \sum_{i=1}^r a_i v_\ell(x_i)v_\ell(x_i)^*, \qwhereq v_\ell(x)\eqdef (e^{-2\imath\pi \dotp{k}{x}})_{k \in \Il}
}
Thus $R$ is a sum of at most $r$ rank one matrices, and $\rank R\leq r$.
\end{proof}

\begin{rem}[Low rank solutions]
It is important to note that, as proved in~\cite[Prop. 2.1]{tang13}, the equality $\min\,(\Qq_0^{(f_c)}(z))=\min\eqref{eq:noiseless}$ always holds for $d=1$. Therefore, if one seeks to recover a sparse measure $\mu=\sum_{i=1}^r a_i \delta_{x_i}$ solution to~\eqref{eq:noiseless}, then some solution to $\min\,(\Qq_0^{(f_c)}(z))$ has rank (at most) $r$. In dimension $d = 2$, it is known that the Lasserre hierarchy collapses at some order $\ell$ that may be arbitrarily large \cite[Section 4]{castro17}. In our experiments however, we have always observed $\min\,(\Qq_0^{(f_c)}(z))=\min\eqref{eq:noiseless}$. This is why our approach, exposed in Section~\ref{sec:ffw}, focuses on capturing a low-rank solution to $(\Qq_0^{(f_c)}(z))$.
\end{rem}

To conclude this section, we give a practical criterion to detect collapsing. As observed by Curto and Fialkow \cite{curto96}, the flatness property (see definition below) is essential when trying to determine whether some matrix is the moment matrix of a measure.

Given a matrix $R \in \Herm_\mell$, we write the block decomposition
\eql{
	R = \begin{blockarray}{ccc} & \Ilun & \Il \setminus \Ilun\\
		\begin{block}{c(cc)}
			\Ilun 		        & A     & B \\
			\Il \setminus \Ilun & B^h & C \\
		\end{block} 
	\end{blockarray}
	\label{eq:flatmat}
}

\begin{defn}[Flat matrix] Let $R \in \Herm_\mell$. In the decomposition \eqref{eq:flatmat}, we say that $R$ is flat (or that $R$ is a flat extension of $A$) if $\rank R = \rank A$.
\label{def:flat-matrix}
\end{defn}
The following result adapts Theorem~7.7 in \cite{curto96} to our setting. As it is not exactly the one used in \cite{curto96} (especially in terms of multi-dimensional degree), we provide a proof in Appendix~\ref{app:flatness}.

\begin{thm}
\label{thm:flatness}
Let $\ell \geq 2$, and $R \in \Herm^+_\mell \cap \Tomat$ be a positive semi-definite generalized T\oe{}plitz matrix. If $R$ is flat, then there exists a positive ($\rank R$)-sparse Borel measure $\nu$ such that $R$ is the moment matrix of $\nu$. 
\end{thm}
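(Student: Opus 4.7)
The plan is to view $R$ as the Gram matrix of a semi-inner product on the space of multivariate Laurent polynomials, build $d$ commuting unitary shift operators from the Toeplitz structure, and extract the atoms of $\nu$ through their joint spectral decomposition. This is the Curto--Fialkow strategy adapted to the trigonometric / torus setting, where the multiplication operators of \cite{curto96} become unitary rather than Hermitian.

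First, endow $V \eqdef \CC^{\Il}$ with the positive semi-definite form $(p,q)\mapsto p^* R q$ induced by $R$. Since $R\succeq 0$ this descends to a genuine inner product on the quotient $H \eqdef V/\ker R$, with $\dim H = \rank R =: r$. The flatness hypothesis allows us to identify $H$ with the smaller quotient $V_1/\ker A$, where $V_1 \eqdef \CC^{\Ilun}$: by positive semi-definiteness, any $p \in \ker A$ satisfies $\tilde p^* R \tilde p = p^* A p = 0$ for its extension $\tilde p$ to $V$, hence $\tilde p \in \ker R$; thus the inclusion $V_1 \hookrightarrow V$ induces an injective map $V_1/\ker A \to H$, which is bijective by the equality of dimensions $\rank A = \rank R$.

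For each $i \in \{1,\dots,d\}$ define the shift operator $M_i : H \to H$ by $M_i [q] \eqdef [z_i q]$, after choosing the representative $q$ in $V_1$ (so that $z_i q$, having support in $\Ilun + e_i \subset \Il$, still lies in $V$). The central technical point is well-definedness. Whenever $r \in V$ satisfies $z_i r \in V$, the generalized Toeplitz relation \eqref{eq:rtoep} combined with a change of index $\alpha\mapsto \alpha - e_i$ yields the identity
\eq{
	\langle z_i r, z_i r\rangle_R = \langle r, r\rangle_R.
}
Together with $R\succeq 0$, this implies that $\ker R$ is stable under the shift and that each $M_i$ acts as an isometry on $H$, hence as a unitary by finite-dimensionality. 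The $M_i$ pairwise commute because polynomial multiplication does, invoking once more the shift-stability of $\ker R$ to replace intermediate representatives.

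The spectral theorem for commuting unitary operators then produces an orthonormal basis $(\psi_k)_{k=1}^r$ of $H$ with $M_i \psi_k = \lambda_{i,k} \psi_k$ and $\abs{\lambda_{i,k}} = 1$. Choosing $x_k \in \Torus^d$ such that $\lambda_{i,k} = e^{2\imath\pi (x_k)_i}$ and decomposing $[e_0] = \sum_k a_k \psi_k$, set $w_k \eqdef \abs{a_k}^2 \geq 0$. A short induction on $\alpha \in \Il$, based on the shift-stability of $\ker R$ and using the assumption $\ell \geq 2$ to guarantee that elementary shifts admit representatives in $V_1$, yields $[e_\alpha] = M_1^{\alpha_1}\cdots M_d^{\alpha_d}[e_0]$ in $H$, whence
\eq{
	R_{\alpha,\beta} = \sum_k w_k\, \lambda_k^{\beta - \alpha} = \int_{\Torus^d} e^{-2\imath\pi \dotp{\alpha - \beta}{x}} \diff \nu(x),
}
where $\nu \eqdef \sum_k w_k \delta_{x_k}$ is a positive Borel measure supported on at most $r = \rank R$ points (after merging atoms with equal $x_k$ and discarding zero weights). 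The main obstacle throughout is the shift-stability of $\ker R$, which requires combining the Toeplitz identity with positive semi-definiteness while keeping careful track of which shifted polynomials remain in $\Il$; once this is in place the spectral argument is standard.
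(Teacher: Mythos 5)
Your proof is correct, and it takes a genuinely different route from the paper. The paper never works directly with shift operators on the torus: it transports the matrix $R$ to a moment matrix in the complex variables $(\Z,\conj{\Z})$ and then to real variables $(\Xm,\Ym)$ via the congruences \eqref{eq:congruency-T-C} and \eqref{eq:congruency-C-R} (Lemmas~\ref{lem:equiv-T-C} and \ref{lem:equiv-C-R}), invokes the Laurent--Mourrain flat extension theorem \cite[Th.~3.2]{laurent09} on the restriction to $\Cc_{\ell-1}^+$, proves a separate lemma (the $B=AQ$, $C=Q^*AQ$ argument in the spirit of \cite{curto96}) to show the resulting measure represents the full matrix on $\Cc_\ell$, and finally pulls the measure back to $\Torus^d$ using the relations $X_k^2+Y_k^2-1\in\ker N$. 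You instead give a self-contained GNS-type argument in the torus setting: the quotient $H=\CC^{\Il}/\ker R$, shift operators well defined thanks to flatness and the kernel-stability consequence of the T\oe{}plitz identity, unitarity from the isometry identity, commutativity by re-choosing representatives in $\CC^{\Ilun}$, and joint diagonalization of commuting unitaries to extract the atoms. What your route buys is that the unitary structure specific to the torus removes the need for the change-of-variables lemmas, for the external flat extension theorem, and for the extra lemma extending the representation from $\Cc_{\ell-1}^+$ to $\Cc_\ell$; what the paper's route buys is that all spectral and commutation arguments are delegated to a known general theorem, at the price of the transfer machinery. Two small points to tighten in your write-up: the injectivity of $V_1/\ker A\to H$ is the easy direction (a principal-submatrix restriction of $R\tilde p=0$), while the positive-semidefiniteness argument you give is what makes the map well defined; and both the commutativity of the $M_i$ (for $i\neq j$, the representative error term has support in $\Ilun\cup(\Ilun+e_j)$, whose shift by $e_i$ stays in $\Il$) and the induction $[e_\alpha]=M^\alpha[e_0]$ for indices $\alpha$ with coordinates equal to $\pm\ell$ require exactly the support bookkeeping you allude to -- it does go through, but it is where the proof would break if done carelessly, so it deserves to be written out.
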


\subsection{Support reconstruction}
\label{sec:support-extraction}

%
%
%

We now discuss the problem of recovering the support $\{x_1, \ldots, x_r\}$ of the underlying positive measure $\nu = \sum_{j=1}^r a_j \de_{x_j}$ from its moment matrix $R_\ell(\nu)$.  A first approach, as in \cite{josz17}, is to use the method proposed in \cite{harmouch18}. In this section, we present a slightly different procedure, leveraging the low-rank factorization $R_\ell(\nu) = UU^*$ of the moment matrix that our algorithm provides, see Section~\ref{sec:ffw}. It follows the method exposed in \cite[Section 4.3]{Lasserre}, and is summarized in Algorithm~\ref{algo:extraction}.

By construction of moment matrices, one has $R_\ell(\nu) = V(\xm)DV(\xm)^*$, where $V(\xm) = \begin{bmatrix} e^{-2i\pi\dotp{k}{x_1}}, \ldots, e^{-2i\pi\dotp{k}{x_r}}\end{bmatrix}_{k\in\Om_\ell}$, and $D = \diag (a_1, \ldots, a_r)$. In the following, we write $v(x_1), \ldots, v(x_r)$ the columns of $V(\xm)$. Let $\tilde{U}$ be the reduced column echelon form of $U$, \ie $\tilde{U}$ has the form
\eq{
	\tilde{U} = \begin{bmatrix}
				1 	 	&		&		&		& 		\\
				\star 	&		&		&		& 		\\
				0 	 	& 1		&		&		& 		\\
				0 	 	& 0		& 1		&		& 		\\
				\star 	& \star 	& \star	&		& 		\\
				\vdots 	& 		& 		& \ddots 	& 		\\
				0 		& 0 		& 0 		& \ldots 	& 1		\\
				\star 	& \star 	& \star 	& \ldots 	& \star 	\\
				\vdots 	& 		& 		& 		& \vdots 	\\
				\star 	& \star 	& \star 	& \ldots 	& \star 	\\
			   \end{bmatrix}.
}
It is obtained by Gaussian elimination with column pivoting. The matrices $V(\xm)$ and $\tilde{U}$ span the same linear space. In particular, for any $1 \leq j \leq r$, $v(x_j) \in \Im \tilde{U}$. Therefore, there exists $w(x_j) \in \CC^r$ such that
\eql{\label{eq:poly-eq}
	v(x_j) = \tilde{U} w(x_j),
}
and one can verify that
\eq{
	w(x_j) = \begin{bmatrix} e^{-2i\pi\dotp{\ga_1}{x_j}} \\ \vdots \\ e^{-2i\pi\dotp{\ga_r}{x_j}} \end{bmatrix}, 
}
where $\ga_1, \ldots, \ga_r$ are the indices of the lines containing the pivots elements in $\tilde{U}$.

The recovery procedure thus reduces to solving the system of (trigonometric) polynomial equations
\eql{\label{eq:poly-sys}
	v(x) = \tilde{U} w(x),
}
for $x \in \Torus^d$. A popular method to solve such problems is the \emph{Stetter-M\"oller method}~\cite{moller95}, also known as the \emph{eigenvalue method}, which relates the solutions of \eqref{eq:poly-sys} to the eigenvalues of so-called \emph{multiplication matrices}, that can be build directly from the matrix $\tilde{U}$.
The multiplication matrices in the basis $\Bb \eqdef \{e^{-2i\pi\dotp{\ga_1}{\cdot}}, \ldots, e^{-2i\pi\dotp{\ga_r}{\cdot}}\}$ are the matrices $N_n$, $1 \leq n \leq d$, that satisfy
\eql{\label{eq:m-matrix}
	N_n w(x_j) = e^{-2i\pi\dotp{e_n}{x_j}} w(x_j), \quad \forall 1 \leq j \leq r.
}

In practice, the matrices $N_n$ can be read straightforwardly using \eqref{eq:poly-eq} by selecting the rows indexed by the monomials $e^{-2i\pi\dotp{\ga_j + e_n}{x}}$ in $\tilde{U}$, for $j=1, \ldots, r$. Then, using \eqref{eq:m-matrix}, one can retrieve the coordinates of each point of the support by computing the eigenvalues of each matrix $N_n$, $n=1, \ldots, d$. However, it is not clear how to put these coordinates together for recovering the positions $x_1, \ldots, x_r \in \Torus^d$. To this end, it is better to consider the eigenvectors $w(x_j)$, $j=1, \ldots, r$. Indeed, these eigenvectors are common to each matrices $N_n$. Therefore, if one consider the matrix $N = \sum_{n=1}^d \la_n N_n$, where $\la_n$ are random real numbers such that $\sum_n \la_n = 1$, then all the eigenspaces of $N$ are $1$-dimensional with probability $1$, and spanned by the vectors $w(x_j)$, $j=1, \ldots, r$. Thus, writing the Schur decomposition $N = Q T Q^*$, where $Q = \begin{bmatrix}q_1, \ldots, q_r \end{bmatrix}$ is orthogonal and $T$ is upper triangular, yields
\eq{
	e^{-2i\pi \dotp{e_n}{x_j}} = q_j^* N_n q_j,
}
from where we deduce the positions $x_j$, for $1\leq j \leq r$.


\begin{algorithm}
\caption{Support recovery}
\label{algo:extraction}
\begin{algorithmic}
\STATE{\textbf{input}: $U \in \Mm_{m,r}(\CC)$ of \emph{rank $r$} s.t. $R_\ell(\nu) = UU^*$}
\STATE{1. \emph{Compute multiplication matrices}:}
\STATE{\qquad \begin{tabular}{ll}
				(1.a) & compute the reduced column echelon form $\tilde{U}$ of $U$, and store the \\
					& (multi-)indices $\ga_1, \ldots, \ga_r \in \Om_\ell$ of pivot elements\\
				(1.b) & \textbf{for} $n=1, \ldots, d$\\
					& \quad - $I_n \eqdef \enscond{\ga_j + e_n}{j = 1, \ldots, r}$, with $e_n = (0, \ldots, 1, 0, \ldots, 0) \in \RR^d$\\
					& \quad - compute $N_n \eqdef \left(\tilde{U}_{i,j}\right)_{i \in I_n, j \in \segi{1}{r}}$\\
					& \textbf{endfor}
				\end{tabular}}
\STATE{2. Compute a random combination $N = \sum_{n=1}^d \la_n N_n$, where $\la_n \in \RR$ are random and satisfy $\sum_n \la_n = 1$}
\STATE{3. Compute the Schur decomposition $N = QTQ'$, with $Q \eqdef [q_1, \ldots, q_r]$}
\STATE{4. Compute $z_{j,n} \eqdef q_j' N_n q_j$, \quad $j=1, \ldots, r$, \quad $n=1, \ldots, d$}
\STATE{\textbf{return} $x_{j,n} = -\frac{1}{2\pi}\arg{z_{j,n}} \mod 1$, \quad $j=1, \ldots, r$, \quad $n=1, \ldots, d$}
\end{algorithmic}
\end{algorithm} 

\subsection{Semidefinite relaxation for the BLASSO}
\label{sec:sdp-blasso}
In view of Remark 2, concatenating the minimization in $z$ and in $(R,\tau)$, we are led to solve the following problem
\eql{\label{eq:sdp-blasso}\tag{$\Pp_\la^{(\ell)}(y)$}
		\umin{\substack{R \in \Herm_\mell^+, \\ \tilde{z} \in \CC^\mell,\\ \tau \in \RR}}    
	\frac{1}{2}\left(\frac{\Tr(R)}{\mell} + \tau\right) + \frac{1}{2\la}\normH{y-\Aa z}^2 \;\; \text{s.t.} \; \left\{
					\begin{array}{lll}
						(a) & \begin{bmatrix} R & \tilde{z} \\ \tilde{z}^* & \tau \end{bmatrix} \succeq 0\\
						(b) & z_k = \tilde{z}_k \quad \forall k \in \Om_c\\
						(c) & R \in \Tomat
					\end{array}
				\right.,
}
where $\mell = (2\ell+1)^d$. The problem \eqref{eq:sdp-blasso} is the semidefinite relaxation of \eqref{eq:blasso} at order $\ell$.

As it appears, the size of the above semidefinite program is $\mell^2 \geq (2f_c+1)^{2d}$. Therefore, usual \textcolor{black}{interior points methods} are limited when $d>1$, or even when $d=1$ for large values of $f_c$. In the rest of this paper, we introduce a method which scales well with the dimension $d$.

Thanks to the low-rank property highlighted in Section~\ref{sec:collapse-lowrank}, the search space of \eqref{eq:sdp-blasso} may be restricted to rank-deficient matrices. Such geometry is well exploited by conditional gradient algorithms. However, these methods are not able to handle the SDP contraint $(a)$ together with the linear constraint $(c)$. Furthermore, the intersection between manifolds of fixed rank matrices and the linear space defined by $(c)$ quickly becomes unanalyzable for matrices larger than $2\times 2$, and non-convex optimization schemes on this search space would likely be difficult to implement. Instead, we propose to smooth the geometry of the problem.

\section{T{\oe}plitz penalization}
\label{toeplitz_relaxation}

%
To overcome the difficulty induced by constraint $(c)$ in \eqref{eq:sdp-blasso}, we introduce a penalized version of \eqref{eq:sdp-blasso} -- which can also be seen as a perturbation of the atomic norm regularizer used in \cite{tang13, tang-recht15}. 

Let $P_{\Tomat}$ be the projector on the set $\Tomat$; when working with the colexicographical order, as we do in our numerical simulations, $P_{\Tomat}$ takes the form
\eq{
	\projtoep: R \mapsto \sum_{\normi{k} \leq 2\ell} \frac{\dotp{R}{\Theta_k}}{\norm{\Theta_k}^2} \Theta_k
}
where the matrix $\Theta_k$ are introduced in Section~\ref{sec:moment-matrices}. In dimension one, the operator $\projtoep$ replaces each entry of $R$ by the mean of the corresponding diagonal. We consider the following program:

\eq{		
	\begin{aligned}
		\umin{\tau,z,R} &\frac{1}{2}\left(\frac{1}{\mell}\Tr(R) + \tau \right) + \frac{1}{2\la}\normH{y - \Aa z}^2 + \frac{1}{2\rho}\norm{R - \projtoep (R)}^2\\
		\text{s.t.} &\left\{
					\begin{array}{ll}
						\begin{bmatrix}R & \tilde{z} \\ \tilde{z}^h & \tau \end{bmatrix} \succeq 0\\
						\tilde{z}_k = z_k, \quad \forall k \in \Om_c
					\end{array}
				\right.
	\end{aligned}
	\label{eq:sdp-blasso-pen}
	\tag{$\Pp_{\la,\rho}^{(\ell)}(y)$}
}%
where the parameter $\rho$ controls the penalization of the T{\oe}plitz constraint. We write $f_{\la,\rho}$ the objective of this problem, and $\Rr_{\la,\rho}$ a solution. We study numerically the validity of this approach. For simplicity, the numerical experiments of this section are all in dimension one.

\subsection{Sensitivity analysis}


Although $\Rr_{\la,\rho}$ obviously differs from the true solution $\Rr_\la$, it is possible to show, following an approach similar to \cite{vaiter15}, that under some mild non-degeneracy hypothesis on $\eta_{\la,\rho}$, and for small enough values of $\rho$, $\Rr_{\la,\rho}$ is sufficiently close to $\Rr_\la$ to allow accurate support reconstruction. In particular, both matrices have the same rank. Numerical observations confirm that this regime exists: Figure~\ref{rank-stability} shows the evolution of $\rank\Rr_{\la, \rho}$ with respect to $\rho$. We see that the rank of $\Rr_{\la,\rho}$ remains stable for low values of $\rho$, and equal to the sparsity of the initial measure. 

\begin{figure}
	\includegraphics[width=0.5\textwidth]{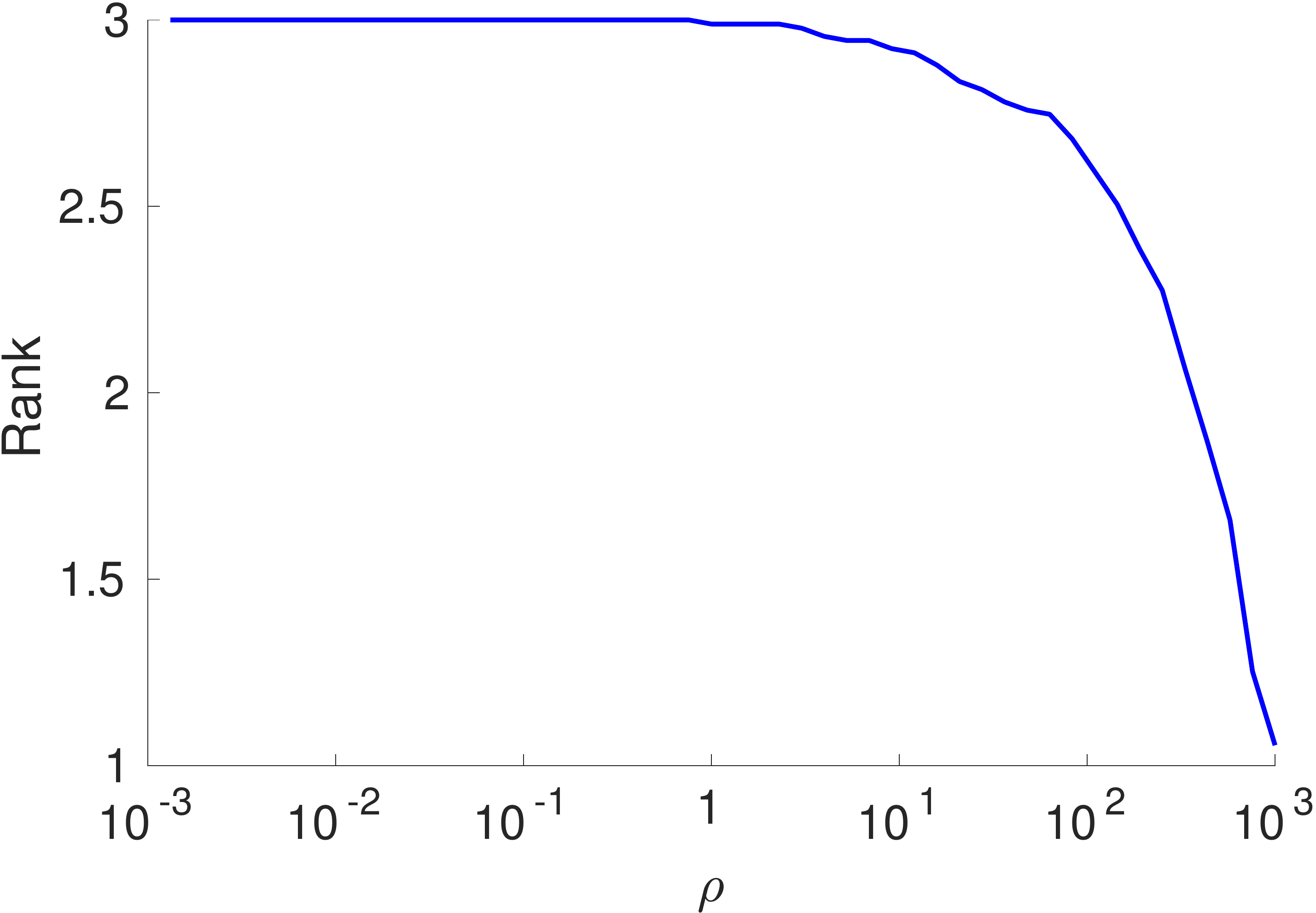}
	\begin{minipage}[b]{0.48\textwidth}
	\caption{Rank drop (for Dirichlet measurements, with $f_c=17$), with respect to $\rho$. Results are averaged over $100$ random trials of positive $3$-sparse initial measures; the minimal separation distance, \ie the minimal distance between two consecutive spikes, is larger than $1/(10f_c)$ in all the cases.}
	\label{rank-stability}
	\end{minipage}
	\postimagespace
\end{figure} 

The following proposition makes this statement more precise.

\begin{prop} Let $(\Rr_{\la,\rho_n})$ be a sequence of solution of $(\Pp^{(\ell)}_{\la,\rho_n}(y))$, with $\rho_n \rightarrow 0$ as $n \rightarrow \infty$. Then any accumulation point of $\left(\Rr_{\la,\rho_n} \right)$ is a solution of \eqref{eq:sdp-blasso}.
\label{prop:sensitivity}
\end{prop}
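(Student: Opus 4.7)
The plan is to run the classical exterior penalty argument: as $\rho_n \downarrow 0$, the Toeplitz penalty blows up off $\Tomat$, so any limit point is forced to lie in $\Tomat$, and a standard ``comparison with a fixed minimizer'' inequality yields optimality.

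First, I would fix once and for all a solution $\Rr_\la = (R_\la,\tilde{z}_\la,\tau_\la)$ of \eqref{eq:sdp-blasso}. Since $R_\la \in \Tomat$, the penalty $\|R_\la - \projtoep(R_\la)\|^2$ vanishes, so $\Rr_\la$ is admissible for \eqref{eq:sdp-blasso-pen} and satisfies $f_{\la,\rho_n}(\Rr_\la) = f_\la(\Rr_\la)$, where $f_\la$ denotes the objective of \eqref{eq:sdp-blasso}. By optimality of $\Rr_{\la,\rho_n}$,
\begin{equation*}
f_\la(\Rr_{\la,\rho_n}) + \frac{1}{2\rho_n}\|R_{\la,\rho_n} - \projtoep(R_{\la,\rho_n})\|^2 = f_{\la,\rho_n}(\Rr_{\la,\rho_n}) \leq f_\la(\Rr_\la).
\end{equation*}
The three terms of $f_\la$ are all nonnegative on the admissible set (indeed $\Tr R \geq 0$ and $\tau \geq 0$ follow from the positive semidefiniteness of the block matrix in (a), and the data fit is nonnegative), so I can drop $f_\la(\Rr_{\la,\rho_n})$ on the left to obtain the a priori bound
\begin{equation*}
\|R_{\la,\rho_n} - \projtoep(R_{\la,\rho_n})\|^2 \leq 2\rho_n\, f_\la(\Rr_\la) \longrightarrow 0.
\end{equation*}

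Next, let $\Rr^\star = (R^\star,\tilde{z}^\star,\tau^\star)$ be an accumulation point and extract a subsequence (still indexed by $n$ for brevity) converging to $\Rr^\star$. Feasibility of $\Rr^\star$ for \eqref{eq:sdp-blasso} follows because (a) and (b) are closed conditions (closedness of the PSD cone and of the affine equality $\tilde{z}_k = z_k$), while (c) is obtained by passing the inequality above to the limit together with continuity of $\projtoep$: one gets $R^\star = \projtoep(R^\star)$, i.e. $R^\star \in \Tomat$.

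Finally, the objective $f_\la$ is continuous, so passing to the limit in $f_\la(\Rr_{\la,\rho_n}) \leq f_\la(\Rr_\la)$ gives $f_\la(\Rr^\star) \leq f_\la(\Rr_\la) = \min \eqref{eq:sdp-blasso}$. Since $\Rr^\star$ is feasible, the reverse inequality is automatic, so $\Rr^\star$ is a minimizer. I do not expect any serious obstacle; the only points that need verification are the nonnegativity of $\Tr R$ and $\tau$ on the feasible set (immediate from the Schur-type structure of the constraint (a)) and the continuity of $\projtoep$ (immediate, as it is a linear projection in finite dimension).
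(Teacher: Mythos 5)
Your proof is correct and follows essentially the same exterior-penalty argument as the paper: bound the Toeplitz penalty by an $O(\rho_n)$ quantity to force the limit into $\Tomat$, use closedness of the PSD cone and the affine constraints for feasibility, and pass to the limit in the comparison $f_\la(\Rr_{\la,\rho_n}) \leq f_{\la,\rho_n}(\Rr_{\la,\rho_n}) \leq f_{\la,\rho_n}(\Rr_\la) = f_\la(\Rr_\la)$. The only cosmetic difference is that the paper bounds the penalty term by comparing with the zero matrix (giving $\frac{1}{2\la}\normH{y}^2$) rather than with the fixed minimizer $\Rr_\la$, which changes nothing substantive.
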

\begin{proof} One has
\eq{
	\frac{1}{\mell}\tr(\Rr_{\la,\rho_n}) \leq f_{\la,\rho_n}(\Rr_{\la,\rho_n}) \leq f_{\la,\rho_n}(0) = \frac{1}{2\la}\normH{y}^2
}
hence there exists a subsequence $(\Rr_{\la,\rho_s})$ that converges. Let $\Rr_\la^\star$ be its limit, and let $\Rr_\la^0$ be a solution of \eqref{eq:sdp-blasso}.
Since $\frac{1}{2\rho}\norm{\Rr_{\rho_s} - \projtoep(\Rr_{\la,\rho_s})}^2 \leq \frac{1}{2\la}\normH{y}^2$, one has $\norm{\Rr_{\la,\rho_s} - \projtoep(\Rr_{\la,\rho_s})} \rightarrow 0$ when $s \rightarrow \infty$, which ensures that $\Rr_\la^\star \in \Tomat$.  Furthermore, we have

\eq{
	f_\la(\Rr_{\la,\rho_s}) \leq f_{\la,\rho_s}(\Rr_{\la,\rho_s}) \leq f_{\la,\rho_s}(\Rr_\la^0) = f_\la(\Rr_\la^0).
}
%
%
Passing to the limit in these inequalities thus gives $f_\la(\Rr_\la^\star) \leq f_\la(\Rr_\la^0)$. Since $\Rr_\la^\star$ is semi-definite positive (as the SDP cone is closed) and belongs to $\Tomat$, it is a solution of \eqref{eq:sdp-blasso}.
\end{proof}

\subsection{Support recovery}
\label{subsec::approximate-rootfinding}
We observe numerical evidences of the robustness of the extraction procedure described in Section~\ref{sec:support-extraction} in this penalized setting: although the solutions of \eqref{eq:sdp-blasso-pen} do not exactly satisfy the generalized T\oe{}plitz property, and therefore are not moment matrices, Algorithm~\ref{algo:extraction} still yields a good estimation of the support of $\mu_\la$. In particular, in a satisfyingly large regime of values of parameter $\rho$, the eigenvalues of the multiplication matrices (see Section~\ref{sec:support-extraction}) remain very stable: in Fig.~\ref{fig:ev-trajectories}, we consider a $1$D setting, and we plot $\arg z_j(\rho)$, where $z_j(\rho)$, $1\leq j \leq r$ are the eigenvalues of the multiplication matrix of Algorithm~\ref{algo:extraction}, extracted from a solution of \eqref{eq:sdp-blasso-pen}. The width of the line is defined as $\abs{\log( \abs{1 - \abs{z_j}} )}$, so that the thicker the line is, the closer $z_j(\rho)$ is to the unit circle. We see that the extraction procedure is very stable, up to a certain point, which coincides with the first rank drop of $\Rr_{\la,\rho}$.

\begin{figure}
	\includegraphics[width=0.5\textwidth]{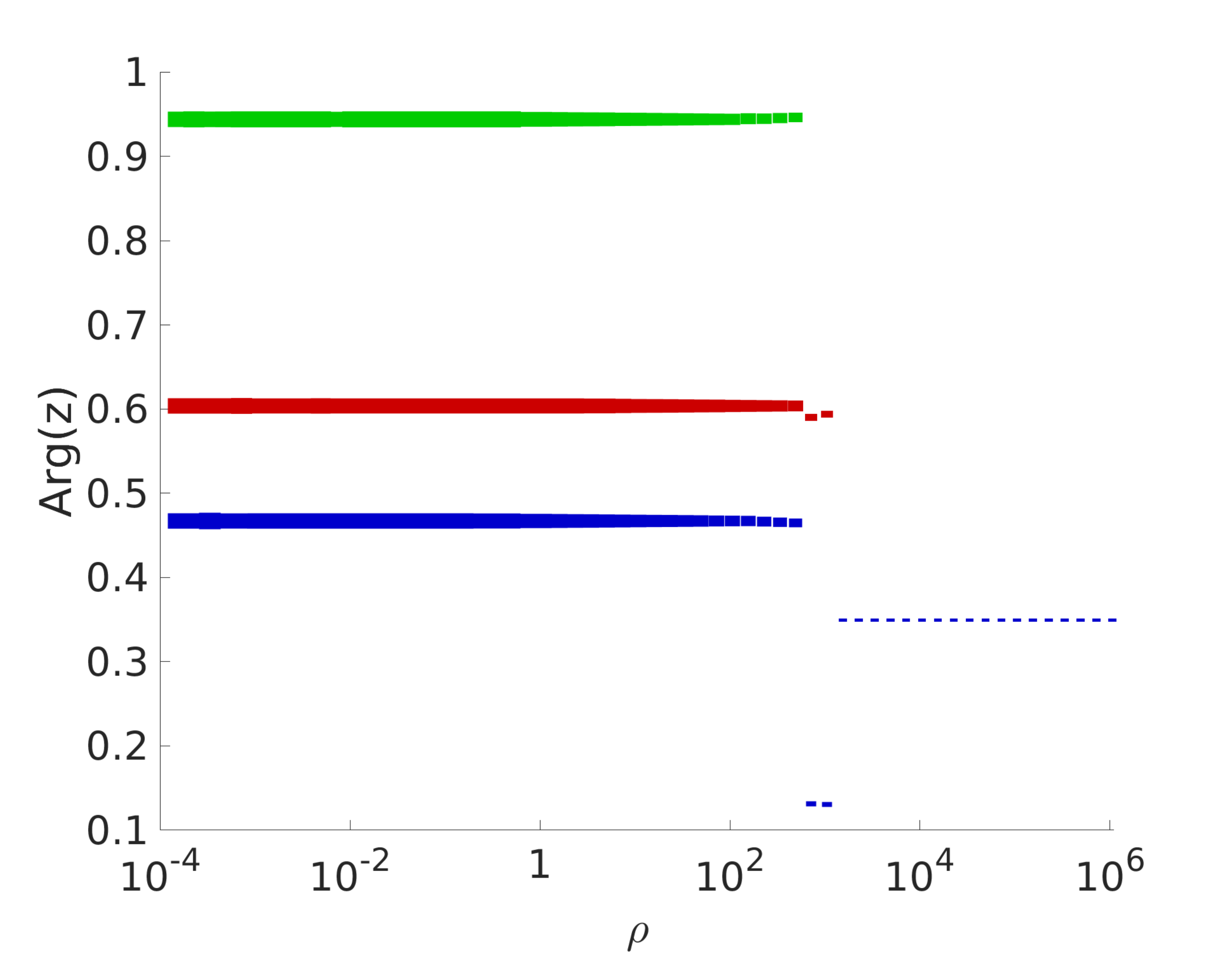}
	\begin{minipage}[b]{0.48\textwidth}
	\caption{Trajectories of the support $x_j = \arg(z_j)$, $1\leq j \leq 3$ (see Algorithm~\ref{algo:extraction}), with respect to $\rho$.}
	\label{fig:ev-trajectories}
	\end{minipage}
	\postimagespace
\end{figure} 
\section{FFT-based Frank-Wolfe (FFW)}
\label{sec:ffw}

In this section, we propose an efficient numerical scheme for solving \eqref{eq:sdp-blasso}, that takes advantage of the low-rank property of the solutions as well as of the convolutive structure of the T\oe{}plitz constraint. Given a matrix $\Rr$ of the form \eqref{eq:block-matrix}, we consider in the rest of the paper the normalized objective function
\eql{\label{eq:objective}	
	f(\Rr) \eqdef C_0 \left( \frac{1}{2}\left(\frac{\Tr(R)}{\mell} + \tau\right) + \frac{1}{2\la}\normH{y - \Aa z}^2 + \frac{1}{2\rho}\norm{R - \projtoep(R)}^2 \right),
}
where $C_0 = 2\la / \normH{y}^2$.

\subsection{Frank-Wolfe}
\label{subsection:fw}

The Frank-Wolfe algorithm \cite{frankwolfe56}, \textit{a.k.a.} conditional gradient, aims at minimizing a convex and continuously differentiable function $f$ over a compact convex subset $K$ of a vector space. The essence of the method is as follows: linearize $f$ at the current position $\Rr_t$, solve the auxiliary linear problem of minimizing $\Ss \mapsto \dotp{\nabla f(\Rr_t)}{\Ss}$ on $K$, and move towards the minimizer to obtain the next position. This scheme ensures the sparsity of its iterates, since the solution after $k$ iterations is a convex combination of at most $k$ atoms. We refer to \cite{jaggi13} for a detailed overview of the method. Since no Hilbertian structure is required, it is a good candidate to solve problems in Banach space \cite{bredies13}. Moreover, in many cases, the linear minimization oracle may be computed efficiently, as it amounts to extracting an extremal point of the set $K$. 

\paragraph{Over the semidefinite cone} In our case, $K$ is the positive semidefinite cone, which is linearly spanned by unit-rank matrices. It is not bounded (hence not compact), but one can restrict \eqref{eq:sdp-blasso-pen} over a bounded subset of the cone by noticing that for any solution $\Rr^\star$ of \eqref{eq:sdp-blasso-pen}, one has
\eq{
	\frac{1}{2}\left(\tau^\star + \frac{\Tr(R^\star)}{\mell} \right) \leq f(\Rr^\star) \leq f(0),
}
suggesting a subset of the form $\enscond{\Rr \succeq 0}{ \dotp{\Rr}{J_\mell} \leq D_0 \eqdef 2f(0)}$, where
\eq{
	J_\mell = \begin{bmatrix} \frac{1}{\mell}I_\mell & 0 \\ 0 & 1\end{bmatrix},
}
so that $\dotp{\Rr}{J_\mell} = \tau + \frac{1}{\mell}\Tr(R)$. 

The linear minimization then consists in computing a minor eigenvector of $\nabla f(\Rr)$.

\begin{lem}\label{lem:lmo} Let $M \in \Mm_\mell(\CC)$ be a Hermitian matrix, and let $\{\la_1, \ldots, \la_\mell \}$ be its eigenvalues, with $\la_1 \leq \ldots \leq \la_\mell$. Then, for $D_0 \geq 0$,
\eq{
	\uargmin{\substack{S\succeq0 \\ \dotp{S}{J_\mell} \leq D_0}} \dotp{M}{S} = 
		\left\{ \begin{aligned} 
			&D_0J_\mell^{-\frac{1}{2}}e_1e_1^h J_\mell^{-\frac{1}{2}}\qifq \la_1 < 0 \\ 
			&0 \quad \text{otherwise} 
		\end{aligned} \right.
}
where $e_1 \in \Ker(J_\mell^{-\frac{1}{2}}MJ_\mell^{-\frac{1}{2}} - \la_1 I)$ such that $\norm{e_1} = 1$.
\end{lem}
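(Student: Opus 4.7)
The plan is to reduce the problem to the standard semidefinite linear oracle over a trace ball by a congruence change of variable, and then to conclude by spectral decomposition.

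First, note that $J_\mell \succ 0$, so $J_\mell^{1/2}$ and $J_\mell^{-1/2}$ are well defined. Set $T \eqdef J_\mell^{1/2} S J_\mell^{1/2}$; the map $S \mapsto T$ is a bijection of the positive semidefinite cone onto itself. A direct computation with the cyclicity of the trace gives
\begin{align*}
\dotp{S}{J_\mell} &= \Tr(S J_\mell) = \Tr(T), \\
\dotp{M}{S} &= \Tr(M J_\mell^{-1/2} T J_\mell^{-1/2}) = \dotp{\tilde M}{T}, \qwhereq \tilde M \eqdef J_\mell^{-1/2} M J_\mell^{-1/2}.
\end{align*}
Hence the minimization is equivalent to
\eq{
\umin{T \succeq 0,\; \Tr(T) \leq D_0} \dotp{\tilde M}{T}.
}

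Second, I would diagonalize the Hermitian matrix $\tilde M$ as $\tilde M = \sum_{i=1}^\mell \tilde\la_i\, e_i e_i^*$, with $\tilde\la_1 \leq \ldots \leq \tilde\la_\mell$ and $(e_i)$ an orthonormal basis. For any $T \succeq 0$, set $t_i \eqdef e_i^* T e_i \geq 0$; then $\dotp{\tilde M}{T} = \sum_i \tilde\la_i t_i$ and $\Tr(T) = \sum_i t_i \leq D_0$. Minimizing $\sum_i \tilde\la_i t_i$ over this simplex of vectors $(t_i)\in \RR_+^\mell$ yields value $0$ when $\tilde\la_1 \geq 0$ (attained at $T=0$), and $D_0 \tilde\la_1$ when $\tilde\la_1 < 0$, attained only by putting all the mass on the direction of a minor eigenvector, i.e.\ $T^\star = D_0\, e_1 e_1^*$. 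Undoing the change of variable gives $S^\star = D_0\, J_\mell^{-1/2} e_1 e_1^* J_\mell^{-1/2}$ in the latter case, and $S^\star = 0$ otherwise.

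Finally, to match the statement, I would remark that the sign condition on the smallest eigenvalue of $\tilde M$ coincides with that of $M$: since $J_\mell^{-1/2}$ is invertible and Hermitian, conjugation by $J_\mell^{-1/2}$ preserves the inertia (Sylvester's law), so $\tilde\la_1 < 0 \Leftrightarrow \la_1 < 0$ (the smallest eigenvalue of $M$). The main (and only) subtlety in the argument is handling the non-identity trace weight $J_\mell$; once the congruence $S \mapsto J_\mell^{1/2} S J_\mell^{1/2}$ is introduced, everything reduces to the textbook oracle, and I expect no genuine obstacle.
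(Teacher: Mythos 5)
Your proof is correct and follows essentially the same route as the paper: the congruence $S \mapsto J_\mell^{1/2} S J_\mell^{1/2}$ reduces the problem to the standard trace-ball oracle, and a spectral argument on $\tilde M = J_\mell^{-1/2} M J_\mell^{-1/2}$ finishes it (the paper writes $S'$ as a sum of rank-one terms and bounds the Rayleigh quotients by $\la_1$, while you diagonalize $\tilde M$, but this is the same computation). Your observation that the sign of the smallest eigenvalue is preserved under the congruence (Sylvester's law of inertia) is a welcome clarification, since the statement quotes the eigenvalues of $M$ whereas the proof, like yours, really works with those of $\tilde M$.
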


\begin{proof} By the change of variable $S' = J_\mell^{\frac{1}{2}} S J_\mell^{\frac{1}{2}}$, the linear program reformulates
\eq{
	\argmin \dotp{M'}{S'} \qstq \left\{\begin{aligned} &S' \succeq 0 \\ &\Tr(S') \leq D_0 \end{aligned} \right.
}
with $M' = J_\mell^{-\frac{1}{2}} M J_\mell^{-\frac{1}{2}}$.
Let $S' \in \enscond{X \succeq 0}{\Tr(X) \leq D_0}$, and write $S' = \sum \al_i v_iv_i^h$, with $\al_i \geq 0$ and $\norm{v_i} = 1$. Then
\eq{
		\dotp{M'}{S'} = \sum \al_i v_i^h M' v_i \geq \sum \al_i e_1^hM'e_1 = \Tr(S') \la_1
}
with equality if and only if $S' = (\sum \al_i)e_1e_1^h$, hence the desired result.
\end{proof}

\subsection{The FFW algorithm}
\label{sec:ffw-algo}

To solve $\eqref{eq:sdp-blasso}$, we apply a plain Frank-Wolfe scheme, to which we also add a non-convex corrective step similar to \cite{boyd15}, see Algorithm~\ref{algo:ffw}. This last step compensates the slow convergence of Frank-Wolfe.

\paragraph{Linear minimization oracle} As mentioned above, the linear minimization step of Frank-Wolfe in our case simply amounts to computing a minor eigenvector of the gradient of $f$ (or more specifically of $J_\mell^{-1/2}\nabla f J_\mell^{-1/2}$, but we omit this scaling in the following for simplicity) at the current iterate. In practice,  we perform this step efficiently with power iterations, see Section~\ref{sec:fft-tricks}. In order to determine the lowest singular value of $\nabla f$ (simultaneously with a corresponding singular vector), we often need to run the algorithm two times consecutively: if after the first run, a negative eigenvalue $\la_1 < 0$ is returned, this is indeed the lowest one (since power iterations retrieve the eigenvalue whose magnitude is the greatest); if not, we re-run the algorithm on $\nabla f - \la_1 I$, and add the resulting value to $\la_1$ to obtain the lowest singular value of $\nabla f$.

\paragraph{Low-rank storage} To take advantage of the low-rank structure of the solutions, we store our iterates as $\Rr = \Uu\Uu^*$, and work only with the factor $\Uu$. This is a cornerstone of our approach, since in practice the matrix $\Rr$ is too large to be stored entirely. Furthermore, this factorization allows an efficient implementation of several steps of FFW (see Section~\ref{sec:fft-tricks}) that considerably lowers the complexity of the algorithm.

Consequently, at each step of the algorithm, the update consists in adding a column (namely a leading eigenvector of $\nabla f$, as mentioned above) at the end of the matrix $\Uu$, thus increasing by one the rank of $\Rr$ each time. 

\paragraph{Non-convex corrective step} The non-convex step that we add after each Frank-Wolfe update consists, as in \cite{boyd15}, in a gradient descent on $F: \Uu \mapsto f(\Uu\Uu^h)$. The idea is to continuously move the eigenstructue of the iterate in the manifold of fixed rank matrices to improve the value of the functional. This is similar to the celebrated Burer-Monteiro non-convex method for low-rank minimization , which has proven to be vey efficient in practice \cite{boumal16}.
We use a limited-memory BFGS descent in our implementation.

\paragraph{Stopping criterion}
It is known \cite{jaggi13} that if $\Ss$ is a solution of the linear minimization oracle, then it satisfies the inequality $\dotp{\Rr - \Ss}{\nabla{f}(\Rr)} \geq f(\Rr) - f(\Rr_\star)$ for any $\Rr$. We use this property as a stopping criterion, ceasing the iterations if $\dotp{\Rr - \Ss}{\nabla{f}(\Rr)}$ goes below some tolerance $\epsilon$.

\begin{algorithm}
\caption{}
\begin{algorithmic}
\STATE{\textbf{set}: $\Uu_0 = \left[0\ldots 0\right]^\top$, $D_0 = 2f(0)$}
\WHILE{$\dotp{\Uu_r\Uu_r^h - v_rv_r^h}{\nabla{f}(v_rv_r^h)} \geq \epsilon f(x_0)$}
	\STATE{\begin{tabular}{ll}
				1. 	& linear minimization oracle\\
					& \quad $v_r = D_0 J_\mell^{-\frac{1}{2}}\left(\arg\min_{\norm{v} \leq 1}  v^\top \cdot \left( J_\mell^{-\frac{1}{2}}\nabla f(\Uu_r\Uu_r^*)J_\mell^{-\frac{1}{2}} \right) \cdot v \right) J_\mell^{-\frac{1}{2}}$\\
				2. 	& update\\
					& \quad $\hat{\Uu}_{r+1} = \left[\sqrt{\alpha_r} \Uu_r, \sqrt{\beta_r} v_r \right]$, where\\
					& \quad $\alpha_r, \beta_r = \arg\min_{\alpha \geq 0, \beta \geq 0, \alpha + \beta \leq 1} f(\alpha \Uu_r\Uu_r^* + \beta v_rv_r^*)$\\
				3. 	& corrective step \\
					& \quad $\Uu_{r+1} = \textbf{bfgs}\enscond{\Uu \mapsto f(\Uu\Uu^*)}{\Uu \in \CC^{(\mell+1)\times(r+1)}, \; \text{starting from}\;\, \hat{\Uu}_{r+1}}$
			\end{tabular}}
\ENDWHILE
\RETURN{$(\Uu_{i,j})_{1\leq i \leq \mell, ~1\leq j \leq r+1}$}
\end{algorithmic}
\label{algo:ffw}
\end{algorithm} 

\subsection{Fast-Fourier-Transform-based computations}
\label{sec:fft-tricks}

In what follows, we consider a variable $\Rr$ of the form \eqref{eq:block-matrix}, of rank $r$, such that $\Rr = \Uu\Uu^*$ where
\eq{
	\Uu = \begin{bmatrix} U_1 \\ \zeta \end{bmatrix},
}
with $U_1 \in \CC^{\mell \times r}$ and $u \in \CC^r$. In particular, this yields $R = U_1U_1^*$, $z = U_1^*\zeta$ and $\tau = \norm{\zeta}^2$. For clarity, we keep using $R$, $z$ and $\tau$ in our expressions, but in practice only $U_1$ and $\zeta$ are stored. Finally, we assume that the matrices are indexed following the colexicographic order, so that generalized T\oe{}plitz matrices may be written in the form \eqref{eq:gen-toep-colex}. In this section only, we use the same symbol $\Ff$ to refer to the discrete Fourier transform (instead of the discrete-time Fourier transform).

\paragraph{Power Iterations} Computing a minor eigenvector of $\nabla f$ can be done using power iterations, which consist in recursively applying $\nabla f$ to a vector. Given the form \eqref{eq:objective} of the objective, its gradient at $\Rr$ reads
\eql{\label{eq:gradient}
	\Grad f = C_0
		\begin{bmatrix} 
			\frac{1}{2\mell}I + \frac{1}{\rho}(R - \projtoep(R))  & \frac{1}{2\la}\Aa^*(\Aa z - y) \\
			\\
			\frac{1}{2\la}(\Aa^*(\Aa z - y))^* & \frac{1}{2}
		\end{bmatrix},
}
so that multiplying it with a vector $w = \begin{bmatrix} w_1 \\ \om \end{bmatrix}$ ($w_1 \in \CC^\mell$, $\om \in \CC$) yields
\eq{
	\left(\nabla f\right) w = \frac{C_0}{2\mell} \begin{bmatrix} w_1 \\ 0\end{bmatrix} + \frac{C_0}{\rho} \begin{bmatrix}Rw_1 \\ 0 \end{bmatrix} + \frac{C_0}{2\la} \begin{bmatrix} \om \Aa^*(\Aa z - y) \\ \dotp{\Aa^*(\Aa z - y)}{w_1} + \la \om\end{bmatrix} - \frac{C_0}{\rho} \begin{bmatrix} \projtoep(R) w_1 \\ 0\end{bmatrix}.
}
While the first three terms in the sum above are quite straightforward to compute (remember that $Rw_1$ is computed as $U_1(U_1^*w_1)$), evaluating $\projtoep(R)w_1$ on the other hand can be costly. The next two propositions show that it can actually be performed in $O(\mell \log \mell)$ operations using only fast Fourier transforms.


\begin{prop}
Let $U_1^{(1)}, \ldots, U_1^{(r)}$ be the columns of $U_1$. Then $\projtoep(U_1U_1^*) = \sum_{k\in \Om_{2\ell}} u_k \Theta_k$, where 
\eq{
	u_k = \frac{1}{\mathrm{card}\enscond{(s,t) \in \Om_\ell}{s-t=k}} \left[ \sum_{j=1}^r \Ff^{-1} \left(\left|\Ff \left( \tilde{U}_1^{(j)}\right)\right|^2\right) \right]_k,
}
where $\tilde{U}_1^{(j)} \in \CC^{\abs{\Om_{2\ell}}}$ is defined as
\eq{
	\forall s \in \Om_\ell, \quad \forall k \in \Om_{2\ell}, \quad
	(\tilde{U}_1^{(j)})_{s-k} = \left\{ 
		\begin{array}{cl}
			(U_1^{(j)})_{s-k} & \text{if} \quad s-k \in \Om_\ell \\
			0 & \text{otherwise}
		\end{array},
	\right.
}

\end{prop}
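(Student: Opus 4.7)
The plan is to reduce to the rank-one case and then recognize the coefficients $u_k$ as a multidimensional linear auto-correlation, which can be computed without aliasing via zero-padded FFTs.

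First, by bilinearity, $U_1U_1^* = \sum_{j=1}^r U_1^{(j)}(U_1^{(j)})^*$, and $\projtoep$ is linear, so it is enough to establish the identity for a single rank-one matrix $R = UU^*$ with $U \in \CC^{\Om_\ell}$ and then sum the resulting expressions over $j$. Using the explicit formula for $\projtoep$ recalled at the start of Section~\ref{toeplitz_relaxation},
\eq{
 \projtoep(UU^*) = \sum_{k\in\Om_{2\ell}} \frac{\dotp{UU^*}{\Theta_k}}{\norm{\Theta_k}^2} \,\Theta_k .
}
Unfolding the Kronecker product definition of $\Theta_k$ given in Section~\ref{sec:moment-matrices}, one sees that $(\Theta_k)_{s,t} = 1$ iff $s-t = k$ (and $0$ otherwise), so that $\norm{\Theta_k}^2 = \mathrm{card}\{(s,t)\in\Om_\ell^2 : s-t=k\}$, which matches the denominator in the claimed formula.

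The key step is to show that
\eq{
\dotp{UU^*}{\Theta_k} \;=\; \sum_{(s,t)\in\Om_\ell^2,\; s-t=k} U_s\overline{U_t} \;=\; \bigl[\Ff^{-1}(|\Ff(\tilde U)|^2)\bigr]_k .
}
The middle expression is precisely the value at $k$ of the linear auto-correlation of $U$, whose support is contained in $\Om_{2\ell}$. The plan is to realize this linear correlation as a circular correlation on a grid of size $\abs{\Om_{2\ell}} = (4\ell+1)^d$ by zero-padding $U$ into $\tilde U$; since the support of the correlation is strictly smaller than the padded grid in every coordinate, no wrap-around occurs, and the circular and linear auto-correlations agree pointwise on $\Om_{2\ell}$. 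The classical convolution theorem for the $d$-dimensional DFT then gives that the circular auto-correlation of $\tilde U$ equals $\Ff^{-1}(\overline{\Ff(\tilde U)}\cdot \Ff(\tilde U)) = \Ff^{-1}(|\Ff(\tilde U)|^2)$. Summing over $j$ yields the numerator in the statement.

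The main obstacle I expect is purely bookkeeping: matching the sign/orientation convention of the diagonal selected by $\Theta_k$ (inherited from the colexicographic Kronecker factorization) to the convention chosen for the multidimensional DFT, and checking that the zero-padding to $\Om_{2\ell}$ indeed suppresses aliasing along every axis simultaneously. Once the Kronecker structure is written out dimension by dimension, both facts follow from the one-dimensional identity applied separately along each coordinate.
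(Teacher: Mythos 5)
Your proof is correct and follows essentially the same route as the paper: you identify the coefficient $u_k$ as the average of the $k$-th diagonal of $U_1U_1^*$ (you via the stated orthogonal-projection formula onto the family $\Theta_k$, the paper by minimizing the Frobenius distance, which is the same computation), and then evaluate the diagonal sums as a zero-padded, aliasing-free FFT autocorrelation. The only nitpick is that the linear autocorrelation support ($4\ell+1$ per axis) exactly equals, rather than is strictly smaller than, the padded grid size, which still suffices to rule out wrap-around.
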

\begin{proof} 
Since $\normd{\projtoep(U_1U_1^*)}^2 = \sum_{s,t \in \Om_\ell}\abs{u_{s-t}}^2$, we have:
\eq{
	\begin{aligned}
		\normd{\projtoep(U_1U_1^*) - U_1U_1^*}^2 &= \sum_{s,t \in \Om_\ell} \abs{u_{s-t} - \sum_j (U_1^{(j)})_s (U_1^{(j)})_t}^2 \\
		&= \sum_{k \in \Om_{2\ell}} \sum_{s-t=k} \abs{u_k - \sum_j (U_1^{(j)})_s (U_1^{(j)})_{s-k}}^2
	\end{aligned}
}%
Minimizing this quantity with respect to $u$ leads to
\eq{
	\forall k \in \Om_{2\ell}, \quad u_k = \frac{1}{\text{card}\enscond{(s,t) \in \Om_\ell}{s-t=k}} \sum_{s \in \Om_\ell} \sum_{j=1}^r (\tilde{U}_1^{(j)})_s (\tilde{U}_1^{(j)})_{s-k}.
}%
Then
\eq{
		\sum_{s \in \Om_\ell} (\tilde{U}_1^{(j)})_s (\tilde{U}_1^{(j)})_{s-k} = \left[ \tilde{U}_1^{(j)} * \tilde{U}_1^{(j)-}\right]_k
			= \left[\Ff^{-1}\left(\Ff(\tilde{U}_1^{(j)}) \cdot {\Ff(\tilde{U}_1^{(j)})^*}\right) \right]_k,
}%
which yields the desired result.
\end{proof} 

\begin{rem}
The quantity $\mathrm{card}\enscond{(s,t) \in \Om_\ell}{s-t=k}$ can be obtained by computing $\mathds{1}_{\Om_\ell} * \mathds{1}_{\Om_\ell}$, using again fast Fourier transforms.
\end{rem} 

\begin{prop}
Let $T \in \Tomat$, and write $T = \sum_{k \in \Om_{2\ell}} u_k \Theta_k$. Let $w_1 \in \CC^\mell$. Then
\eq{
	\forall k \in \Om_\ell, \quad (T w_1)_k = \left( \Ff^{-1} \left( \left\langle \Ff(u), \Ff\left(\tilde{w}_1\right) \right\rangle \right) \right)_k,
}%
where $\tilde{w}_1 \in \CC^{\abs{\Om_{2\ell}}}$ is defined as
\eq{
	\forall k \in \Om_{2\ell}, \quad (\tilde{w}_1)_k = \left\{ 
		\begin{array}{ll}
			(w_1)_k & \text{if} \quad k \in \Om_\ell\\
			0 & \text{otherwise}
		\end{array}
		\right.
}
\end{prop}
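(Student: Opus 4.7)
The plan is to recognize $T w_1$ as a (truncated) discrete convolution between the defining sequence $u = (u_k)_{k \in \Om_{2\ell}}$ and the zero-padded vector $\tilde{w}_1$, and then invoke the convolution theorem, taking care that the zero-padding removes aliasing on the output indices of interest.

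First, I would unpack the action of a single atom $\Theta_k$ on a vector. Recall from Section~\ref{sec:moment-matrices} (colexicographic ordering) that $\Theta_k$ has a $1$ at position $(i,j)$ exactly when $i-j = k$ and zeros elsewhere, so
\[
  (\Theta_k w_1)_i = \begin{cases} (w_1)_{i-k} & \text{if } i-k \in \Om_\ell, \\ 0 & \text{otherwise.}\end{cases}
\]
Summing over $k \in \Om_{2\ell}$ and using the definition of the zero-padded extension $\tilde{w}_1$ on $\Om_{2\ell}$, this gives, for every $i \in \Om_\ell$,
\[
  (T w_1)_i = \sum_{k \in \Om_{2\ell}} u_k\, (\tilde{w}_1)_{i-k}.
\]
This is precisely the value at $i$ of the linear convolution of $u$ with $\tilde{w}_1$.

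Next I would convert this linear convolution into a circular one on $\Om_{2\ell}$ so that the DFT applies directly. Viewing both $u$ and $\tilde{w}_1$ as vectors indexed by $\Om_{2\ell}$, the circular convolution $(u \circledast \tilde{w}_1)_i$ differs from the linear one only by wrap-around contributions from indices $i-k$ lying outside $\Om_{2\ell}$. A short check in each coordinate shows that for $i \in \Om_\ell$ and $k \in \Om_{2\ell}$ one has $i-k \in \segi{-3\ell}{3\ell}$, and any representative obtained by the modular reduction $\bmod\ (4\ell+1)$ of an index outside $\Om_\ell$ remains outside $\Om_\ell$; since $\tilde{w}_1$ vanishes there, these wrap-around terms contribute zero. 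Hence the circular and linear convolutions coincide on the restriction to $\Om_\ell$, which is exactly the domain we care about. The multidimensional case is obtained by applying this argument in each coordinate (or by Kronecker factorization of the DFT).

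Finally, applying the convolution theorem on $\Om_{2\ell}$ gives
\[
  u \circledast \tilde{w}_1 = \Ff^{-1}\bigl( \Ff(u) \odot \Ff(\tilde{w}_1) \bigr),
\]
where $\odot$ denotes pointwise (Hadamard) product, which is the meaning I read for the bracket $\langle \Ff(u), \Ff(\tilde{w}_1)\rangle$ in the statement. Restricting to $k \in \Om_\ell$ yields the claimed formula. The only delicate point is the no-aliasing verification in the previous paragraph; everything else is a direct transcription of the convolution theorem, and the complexity is $O(\mell \log \mell)$ since each DFT is performed on a vector of length $|\Om_{2\ell}| = O(\mell)$.
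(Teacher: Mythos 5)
Your proof is correct and takes essentially the same route as the paper, whose entire argument is the one-line observation that $Tw_1$ is the aperiodic convolution of $u$ with $w_1$ and can be realized as a periodic convolution with the zero-padded $\tilde{w}_1$; your unpacking of $\Theta_k$, the explicit no-aliasing check on $\Om_{2\ell}$, and the DFT convolution theorem simply make that sentence precise. The only point worth noting is the diagonal convention for $\Theta_k$ (the example in Section~\ref{sec:moment-matrices} places the ones at $j-i=k$, while the computation of $\projtoep(U_1U_1^*)$ in Section~\ref{sec:fft-tricks} uses $i-j=k$ as you do), which is immaterial here since the matrices involved are Hermitian, so $u_{-k}=\overline{u_k}$ and the two readings agree up to the conjugation implicit in the bracket $\langle \Ff(u),\Ff(\tilde{w}_1)\rangle$.
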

\begin{proof} 
The product $T w_1$ is the (aperiodic) convolution of $u$ and $w_1$, and may be formulated as a periodic convolution between $u$ and a zero-padded version of $w_1$, hence the result. 
\end{proof} 

We conclude this section by giving the closed-form expression for the linesearch coefficients $\al_r$ and $\be_r$ in step 2 of Algorithm~\ref{algo:ffw}, assuming in our notations that $\Uu_r = \Uu$ and $v_r = w$.
\begin{prop}
With the same notations as before, let
\eq{ 			
	\left\{
	\begin{aligned}
		c_{11} &\eqdef C_0 \left( \frac{1}{2\la}\normH{\Aa z}^2 + \frac{1}{2\rho}(\norm{R}^2 - \norm{\projtoep(R)}^2) \right)\\
		c_{22} &\eqdef C_0 \left( \frac{\om^2}{2\la}\normH{\Aa w_1}^2 + \frac{1}{2\rho}(\norm{w_1w_1^*}^2 - \norm{\projtoep(w_1w_1^*)}^2) \right)\\
		c_{12} &\eqdef C_0 \left( \frac{\om}{\la}\Re{\dotp{z}{\Aa^*\Aa(w_1)}_\Hilbert} + \frac{1}{\rho}\Re{\dotp{R}{w_1w_1^*} - \dotp{\projtoep(R)}{ \projtoep(w_1w_1^*)}} \right)\\
		c_1 &\eqdef C_0 \left( \frac{1}{2}(\tau + \frac{\Tr R}{\mell}) - \frac{1}{\la}\Re\dotp{y}{\Aa z}_\Hilbert \right)\\
		c_2 &\eqdef C_0 \left( \frac{1}{2}(\om^2 + \frac{\norm{w_1}^2}{\mell}) - \frac{\om}{\la}\Re\dotp{y}{\Aa w_1}_\Hilbert \right)
	\end{aligned}
	\right.
}%
and let $\Delta = \enscond{\al,\be \in \segc{0}{1}^2}{ \al + \be \leq 1}$. Then, the solutions $\al_r, \be_r$ of the linesearch of step 2 in Algorithm~\ref{algo:ffw} are given by
\eq{
	(\al_r, \be_r) = 
	\left\{
		\begin{array}{cl}
			P_\Delta( 0, -\frac{c_2}{2c_{22}} ) & \text{if} \quad c_{11} = 0 \qandq c_{22} \neq 0\\
			P_\Delta( -\frac{c_1}{2c_{11}}, 0 ) & \text{if} \quad c_{22} = 0 \qandq c_{11} \neq 0\\
			P_\Delta(\frac{c_{12} c_2 - 2c_{22}c_1}{4c_{11}c_{22} - c_{12}^2}, \frac{c_{12} c_1 - 2c_{11}c_2}{4c_{11}c_{22} - c_{12}^2}) & \text{otherwise}
		\end{array}
	\right.
}
\end{prop}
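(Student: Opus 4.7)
The strategy is direct expansion: substitute $\Rr(\al,\be) = \al \Uu_r \Uu_r^* + \be v_r v_r^*$ into the objective $f$ of~\eqref{eq:objective}, collect the coefficients of the monomials $1, \al, \be, \al^2, \al\be, \be^2$, and then minimize the resulting bivariate quadratic over the triangle $\Delta$. First I would read off the blocks of $\Rr(\al,\be)$ from the factorizations $\Uu_r = \begin{bmatrix}U_1\\ \zeta\end{bmatrix}$ and $v_r = \begin{bmatrix}w_1\\ \om\end{bmatrix}$: the top-left equals $\al R + \be\, w_1 w_1^*$, the top-right column equals $\al z + \be\om\, w_1$, and the bottom-right scalar equals $\al \tau + \be\om^2$. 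The affine term $\frac{1}{2}(\Tr R/\mell + \tau)$ then contributes only to $c_1$ and $c_2$. Expanding $\frac{1}{2\la}\normH{y - \Aa z}^2$ by bilinearity produces the irrelevant constant $\frac{1}{2\la}\normH{y}^2$, the linear pieces $-\frac{1}{\la}\Re\dotp{y}{\Aa z}$ and $-\frac{\om}{\la}\Re\dotp{y}{\Aa w_1}$ that complete $c_1,c_2$, and three quadratic contributions $\frac{1}{2\la}\normH{\Aa z}^2$, $\frac{\om^2}{2\la}\normH{\Aa w_1}^2$, $\frac{\om}{\la}\Re\dotp{z}{\Aa^*\Aa w_1}$ that feed into $c_{11},c_{22},c_{12}$. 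Using linearity of $\projtoep$ together with the orthogonal-projection identity $\norm{X - \projtoep X}^2 = \norm{X}^2 - \norm{\projtoep X}^2$, the Toeplitz penalty is purely quadratic and completes the remaining halves of $c_{11},c_{22},c_{12}$. Multiplying by $C_0$ and dropping the additive constant yields
\[
f(\Rr(\al,\be)) = c_{11}\al^2 + c_{22}\be^2 + c_{12}\al\be + c_1\al + c_2\be + \mathrm{const},
\]
with exactly the announced coefficients.

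Next I would minimize $g(\al,\be) = c_{11}\al^2 + c_{22}\be^2 + c_{12}\al\be + c_1\al + c_2\be$ over $\Delta$. Stationarity $\nabla g = 0$ reads
\[
\begin{bmatrix}2c_{11} & c_{12}\\ c_{12} & 2c_{22}\end{bmatrix}\begin{bmatrix}\al\\ \be\end{bmatrix} = -\begin{bmatrix}c_1\\ c_2\end{bmatrix},
\]
so when $4c_{11}c_{22} - c_{12}^2 \neq 0$ Cramer's rule delivers the unconstrained critical point of the third case, which one then projects onto $\Delta$. In the degenerate cases $c_{11} = 0$ or $c_{22} = 0$, convexity of $f$ restricted to the affine plane spanned by $\Uu_r\Uu_r^*$ and $v_rv_r^*$ forces the $2\times 2$ Hessian above to be positive semidefinite, so $c_{12}^2 \leq 4c_{11}c_{22} = 0$ and hence $c_{12} = 0$. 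The problem then decouples into an affine one-dimensional minimization in one variable and a strictly convex quadratic in the other, reducing exactly to the first two cases of the statement.

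I expect the main subtlety to be geometric rather than algebraic: the Euclidean projection of an unconstrained minimizer onto a polytope is not in general the constrained minimizer of a non-isotropic quadratic. The stated formula is therefore exact precisely when the unconstrained critical point already lies in $\Delta$ (the generic regime of Frank-Wolfe iterates near convergence), and should otherwise be read as a principled surrogate; I would flag this caveat at the outset so as not to overclaim.
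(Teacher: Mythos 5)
Your proposal is correct and takes essentially the same route as the paper: expand $f(\al \Uu_r\Uu_r^* + \be v_rv_r^*)$ into the quadratic $c_{11}\al^2 + c_{22}\be^2 + c_{12}\al\be + c_1\al + c_2\be + \tfrac{1}{2\la}\normH{y}^2$ and minimize it over $\Delta$; the paper's proof is exactly this one-line reduction, dismissing the degenerate cases with the remark that they do not occur in practice. Your two additions are sound and go slightly beyond the paper: the positive-semidefinite-Hessian argument forcing $c_{12}=0$ when $c_{11}=0$ or $c_{22}=0$ (which cleanly justifies the decoupled first two cases), and the caveat that $P_\Delta$ applied to the unconstrained critical point is not in general the constrained minimizer of an anisotropic quadratic when that point falls outside $\Delta$ --- a genuine imprecision in the stated formula that the paper's proof passes over in silence.
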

\begin{proof}
This is a straightforward consequence of minimizing the quadratic form $f(\al \Rr + \be ww^*) = c_{11}\al^2 + c_{22}\be^2 + c_{12}\al\be + c_1\al + c_2\be + \frac{1}{2\la}\norm{y}_H^2$ over $\Delta$. Furthermore, it is realistic to consider only the three cases above (in particular, one never has $c_{11} = c_{22} = 0$ or $4c_{11}c_{22} - c_{12}^2 = 0$ in practice).
\end{proof} 


\subsection{Implementation of the approximation matrix $\Aa$}
\label{sec:fft-implemented-A}
When dealing with problems that are not deconvolution, such as subsampled convolution or foveation,the approximation matrix $\A$ is a full matrix of size $\abs{\Gg} \times (2f_c+1)^d$, $\Gg$ being the grid on which the measurements live, making it numerically difficult to handle in practice. We give an efficient way to implement multiplication with $\A$ in the case of subsampled convolution, when the grid is a regular lattice. In other cases (foveation in particular), one needs to store the full matrix.

We consider convolution measurements over the grid $\Gg \eqdef \frac{1}{L}\segi{0}{L-1}^d$. The approximation matrix has the form $\A = (c_{-k}(\tilde{\phi})e^{2i\pi\dotp{k}{t}})_{t \in \Ga, k \in \Om_c}$, see Section~\ref{sec:fourier-approximation}. Let $S^\uparrow_q$ and $S^\downarrow_q$ be respectively the upsampling and downsampling (by a factor $q$) operators. For $K \geq 2f_c+1$, let $\Pad_K: \CC^{\abs{\Om_c}} \rightarrow \CC^{K^d}$, be defined as
\eq{
	\Pad_K : (z_k) \mapsto (Z_k), \qwhereq \forall k \in \segi{0}{K-1}^d, \quad Z_k = \left\{
		\begin{array}{cl}
			z_k & \text{if} \quad k-f_c \in \Om_c\\
			0 & \text{otherwise}
		\end{array}
		\right.
}
and let $\mathrm{Restr}_{\Om_c}: \CC^{K^d} \rightarrow \CC^{\abs{\Om_c}}$ be its adjoint. Finally, let $q \in \NN$, such that $q \geq \lceil \frac{2f_c+1}{L} \rceil$, and let $e_c \eqdef \left(e^{-\frac{2i\pi}{L}\dotp{f_c}{n}}\right)_{n \in \segi{0}{Lq-1}^d}$.

The following proposition gives an efficient $O(L^d \log L)$ implementation of the multiplication by $\A$ or $\A^*$.
\begin{prop} \label{prop:A-fft-implementation}
Given $z \in \CC^{\abs{\Om_c}}$ and $y \in \CC^{\abs{\Gg}}$, one has
\eq{
	\begin{aligned}
		&\A z = (Lq) S^\downarrow_q \left( e_c \odot \mathrm{iDFT} (\Pad_{Lq}(D(\phi) z) ) \right),\\
		&\A^* y = D(\phi)^* \mathrm{Restr_{\Om_c}}( \mathrm{DFT}(e_c^* \odot S^\uparrow_q(y))),
	\end{aligned}
}
where $D(\phi) \eqdef \Diag(c_{-k}(\tilde{\phi}))$, and $\odot$ stands for element-wise multiplication.
\end{prop}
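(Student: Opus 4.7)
The plan is to verify the first identity by direct expansion, and then derive the second by taking the adjoint of each constituent operator in the factorization.

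For $\Aa z$, I would expand $(\Aa z)_{n/L}$ at a sampling point $t = n/L$ with $n \in \segi{0}{L-1}^d$:
\begin{align*}
(\Aa z)_{n/L} = \sum_{k \in \Om_c} c_{-k}(\tilde{\phi}) z_k e^{2i\pi \dotp{k}{n}/L} = \sum_{k \in \Om_c} w_k e^{2i\pi \dotp{k}{n}/L},
\end{align*}
with $w \eqdef D(\phi) z$. The goal is to recognize the right-hand side as an inverse discrete Fourier transform evaluated on the grid $\Gg$. Two mismatches must be reconciled: the frequencies $k$ sit in $\Om_c$ (centered at $0$) rather than in $\segi{0}{Lq-1}^d$, and we wish to evaluate at $n/L$ rather than at $m/(Lq)$, the natural grid of a length-$(Lq)^d$ iDFT.

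The first mismatch is handled by $\Pad_{Lq}$: circularly shifting $\Om_c$ into the block $\segi{0}{2f_c}^d$ (which fits inside $\segi{0}{Lq-1}^d$ precisely because $q \geq \lceil (2f_c+1)/L \rceil$) and then applying the iDFT evaluates the trigonometric polynomial with coefficients $w_k$ at the finer nodes $m/(Lq)$, up to a frequency-shift modulation of the form $e^{-2i\pi \dotp{f_c}{\cdot}}$. This modulation is exactly the role of $e_c$. The second mismatch is then taken care of by $S^\downarrow_q$: since $qn/(Lq) = n/L$, extracting every $q$-th entry of the finer evaluation recovers exactly the desired values on $\Gg$. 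The overall factor $Lq$ in front reflects the chosen normalization of the multidimensional iDFT, and the coordinate-wise tensor structure of the DFT lifts the argument from dimension one to arbitrary $d$.

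For $\Aa^* y$, rather than redoing a direct computation, I would simply take the adjoint of each elementary operator in the first identity. On finite-dimensional complex spaces one has $(S^\downarrow_q)^* = S^\uparrow_q$ (downsampling and zero-insertion are transposes), the adjoint of the iDFT is proportional to the DFT, pointwise multiplication by $e_c$ has adjoint pointwise multiplication by $e_c^*$, $\Pad_{Lq}^* = \mathrm{Restr}_{\Om_c}$ by construction, and $D(\phi)^*$ is diagonal with conjugated entries. Chaining these adjoints in reverse order and matching the normalization constants yields the stated formula.

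The main obstacle is not conceptual but one of bookkeeping: one must pin down a single FFT normalization convention (which governs whether the prefactor is $Lq$ or $(Lq)^d$), align the precise definition of $e_c$ with the circular shift implemented by $\Pad_{Lq}$, and carefully track complex conjugates through the adjoint derivation. Once these conventions are fixed consistently, each step of the verification reduces to a routine calculation.
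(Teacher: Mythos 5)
Your proposal matches the paper's proof essentially step for step: expand $(\Aa z)_{n/L}$ directly, absorb the index shift from $\Om_c$ to $\segi{0}{2f_c}^d$ into the modulation $e_c$, zero-pad to $\segi{0}{Lq-1}^d$ so the sum becomes an iDFT of length $(Lq)^d$ evaluated at index $qn$ (i.e.\ downsampling by $q$), and obtain the formula for $\Aa^* y$ by passing to the adjoint of each factor. The bookkeeping points you flag (normalization of the iDFT and the alignment of $e_c$ with the circular shift of $\Pad_{Lq}$) are exactly the details the paper's computation pins down, so the argument is correct and identical in approach.
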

\begin{proof} For $t \in \Ga$, we write $ t = \frac{n}{L}$, with $0 \leq n \leq L-1$.  Then
\eq{
	\begin{aligned}
		\forall t \in \Ga, \quad (\A z)_t &= \sum_{k\in \Om_c} e^{2i\pi \dotp{k}{t}}(Dz)_k\\
			&= e^{-2i\pi \dotp{f_c}{t}} \sum_{k \in \segi{0}{2f_c}^d} e^{\frac{2i\pi}{L}\dotp{k}{n}} (Dz)_{k-f_c}\\
			&= e^{-2i\pi \dotp{f_c}{t}} \sum_{k \in \segi{0}{Lq-1}^d} e^{\frac{2i\pi}{Lq}\dotp{k}{qn}} (\Pad_{Lq}(Dz))_k\\
			&= (Lq) \left( e_c \odot \mathrm{iDFT}( \Pad_{Lq}(Dz))\right)_{qn}
	\end{aligned}
}
which yields the first equality. The second equality on the other hand is obtained by passing to the adjoint.
\end{proof}

\subsection{Complexity} 
Using the FFT implementations described in the two previous sections, we are able to decrease the computational cost of the two elementary operations in FFW: evaluating $f'(\Uu\Uu^*) w$ and evaluating $F'(\Uu)$, for $\Uu \in \CC^{(\mell+1)\times r}$ and $w \in \CC^{\mell+1}$ (and $\mell = (2\ell+1)^d$, $\ell \geq f_c$). Table~\ref{table:complexity} summarizes the costs of both these operations in the three settings we consider in this paper, \ie convolution (for which the approximation matrix $\A$ is diagonal, see Section~\ref{sec:fourier-approx:examples}), subsampled convolution (for which $\A$ is implemented using Prop.~\ref{prop:A-fft-implementation}) and foveation (for which $\A$ is a dense matrix).
\begin{table}[h]
\begin{tabular}{c|c|c}
	& $f'(\Uu\Uu)$ w & $F'(\Uu)$ \\
	\hline
	& & \\
	convolution & $O(r\ell^d \log \ell)$ & $O(r^2\ell^d + r\ell^d \log \ell)$ \\
	& & \\
	subsampled & $O(r\ell^d \log \ell + $ & $O(r^2\ell^d + r\ell^d \log \ell + $ \\
	convolution & $\max(L,f_c)^d\log \max(L,f_c))$ & $\max(L,f_c)^d\log \max(L,f_c))$ \\
	(with regular grid) & & \\
	& & \\
	foveation & $O(r\ell^d \log \ell + L^d f_c^d)$ & $O(r^2\ell^d + r\ell^d \log \ell + L^d f_c^d)$
\end{tabular}
\caption{Computational costs. For the subsampled convolution and foveation cases, where the measurement live on a grid $\Gg$, we assume $\abs{\Gg} = L^d$.}
\label{table:complexity}
\end{table}

\section{Numerics}
\label{sec:numerics}

We study in this section the behavior of FFW with respect to parameters such as sparsity, minimal separation distance, $\la$, $\rho$ or the number of BFGS iterations.

\paragraph{Scalings} As already mentioned, in all our tests, we consider the objective multiplied by $C_0 = 2\la/\norm{y}^2$ (and its gradient accordingly), so that $f(0) = 1$. If the observations lie on a grid $\Gg$, \ie $\Hilbert = \CC^{\abs{\Gg}}$, we choose $\normH{\cdot} = \frac{1}{\sqrt[d]{\abs{\Gg}}}\norm{\cdot}$. Finally, we scale the parameter $\la$ of the BLASSO with $\normi{\Phi^*y}$, \ie we set $\la = \la_0 \normi{\Phi^*y}$.

\paragraph{Power Iteration step} The tolerance for the power iteration step is set to $10^{-8}$, with a maximum of $2000$ iterations. Iterations are stopped when the angle between the eigenvectors returned by two consecutive steps goes below the tolerance.

\paragraph{BFGS step} We use Mark Schmidt's code for the BFGS solver \cite{schmidt05}. The tolerance for this step is set to $10^{-11}$ (in terms of functions or parameters changes), with typically $500$ as the maximum number of iterations. This step is crucial to ensure the finite convergence of the algorithm. When $\rho$ tends to zero, plain Frank-Wolfe steps become significantly insufficient, and the number of BFGS iterations necessary to converge at each step increases, see Fig.~\ref{fig:perf-cost}.

\paragraph{Stopping criterion} As explained in Section~\ref{sec:ffw-algo}, the linear minimization oracle in Frank-Wolfe gives access to a bound on the current duality gap, which can then be used to decide when to stop the algorithm. However, it is difficult to define a stopping criterion based on this property that remains stable from one kernel to another. Therefore, in our implementation, we rather use the objective decrease as stopping criterion, and stop the iterations when $\abs{f(\Rr_{t+1}) - f(\Rr_t)}$ goes below some tolerance $\eps$ (where $f$ is the normalized objective). In the tests presented in this section, $\eps$ is set to $10^{-8}$.

\paragraph{Support extraction} In the tests presented here, the extraction step (see Section~\ref{sec:support-extraction}) is perfomed following the approach of \cite{josz17}, using the implementation of C\'edric Josz, that he kindly let us use.

\subsection{Tests on synthetic data}
To generate our measurements, we follow \eqref{eq:obs}. The ground-truth measures $\mu_0$ are randomly generated: specifically, we draw random positions uniformly over $\Torus^d$, and random amplitudes uniformly over $\segc{-1}{1}$. Figure~\ref{fig:examples} gives instances of reconstruction in each setting described in Section~\ref{sec:fourier-approx:examples}, with reconstruction errors with respect to $\mu_0$.

\begin{figure}
	\centering
	\captionsetup[subfigure]{labelformat=empty}
	\subfloat[$f_c = 15$, $\frac{\norm{w}}{\norm{y_0}} = 10^{-4}$]{\includegraphics[width=0.25\textwidth]{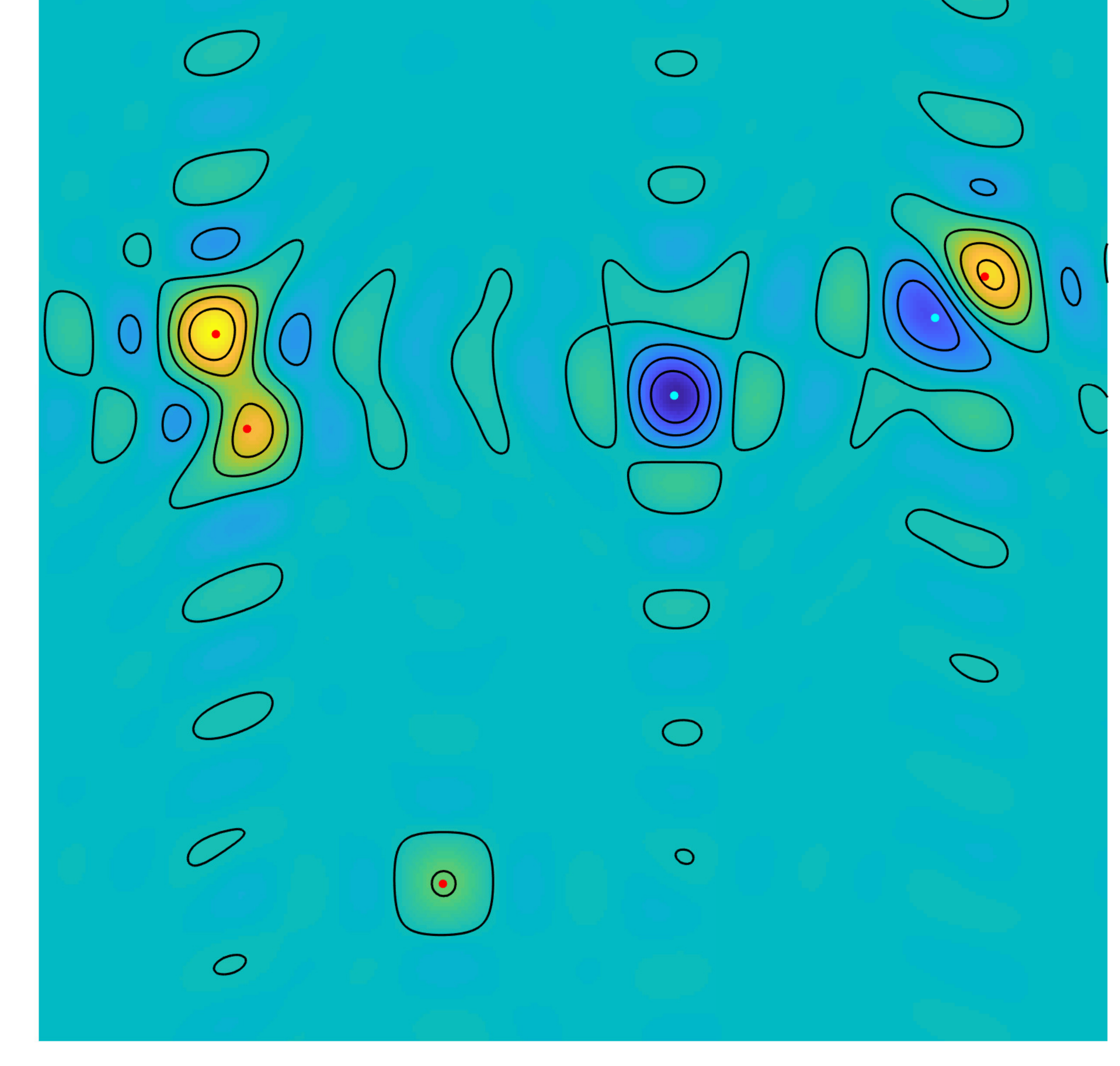}}
	\subfloat[$f_c = 30$, $\frac{\norm{w}}{\norm{y_0}} = 4.10^{-5}$]{\includegraphics[width=0.25\textwidth]{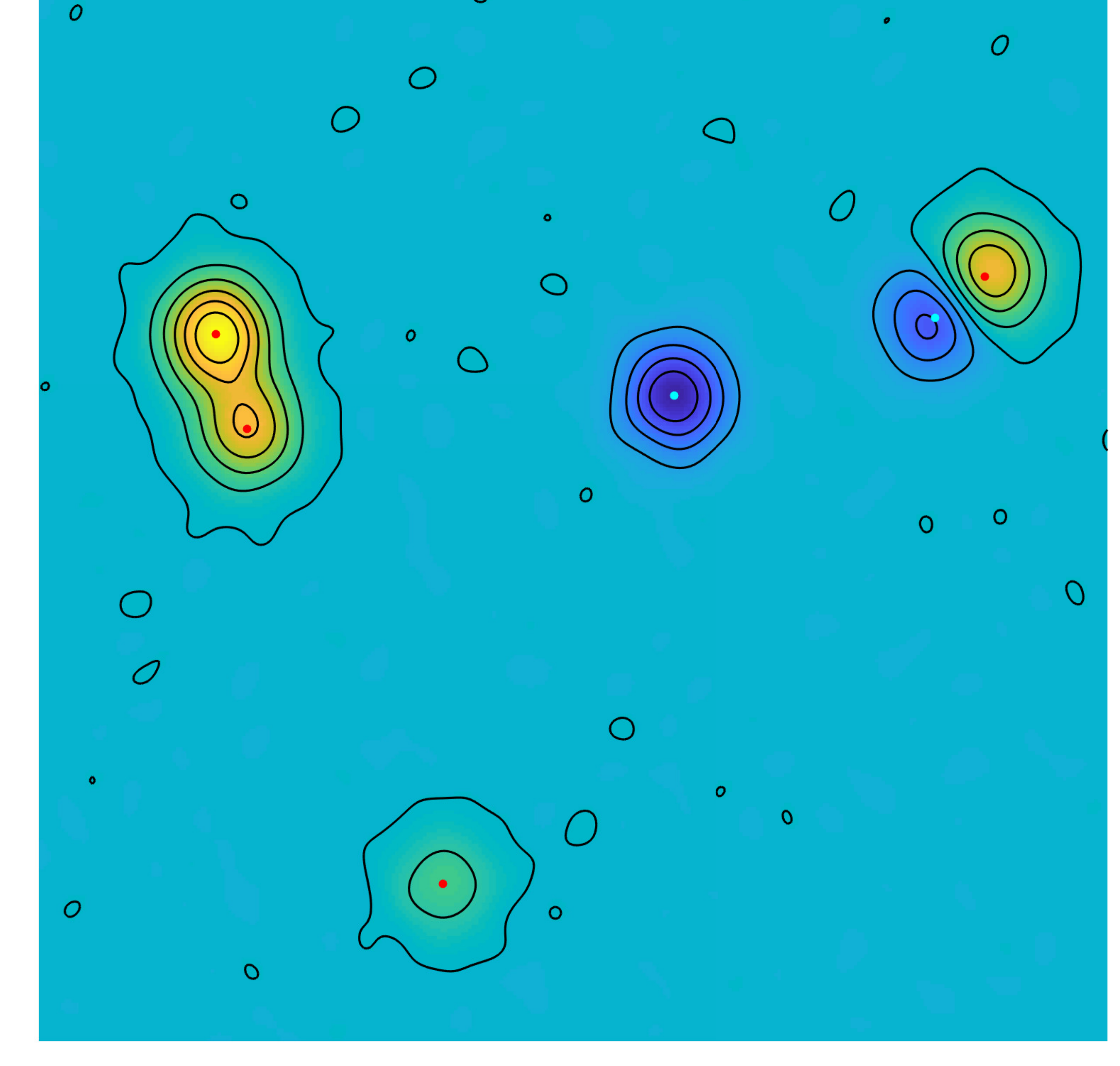}}
	\subfloat[$f_c=30$, $\frac{\norm{w}}{\norm{y_0}} = 10^{-2}$, $\Gg : 64\times64$]{\includegraphics[width=0.25\textwidth]{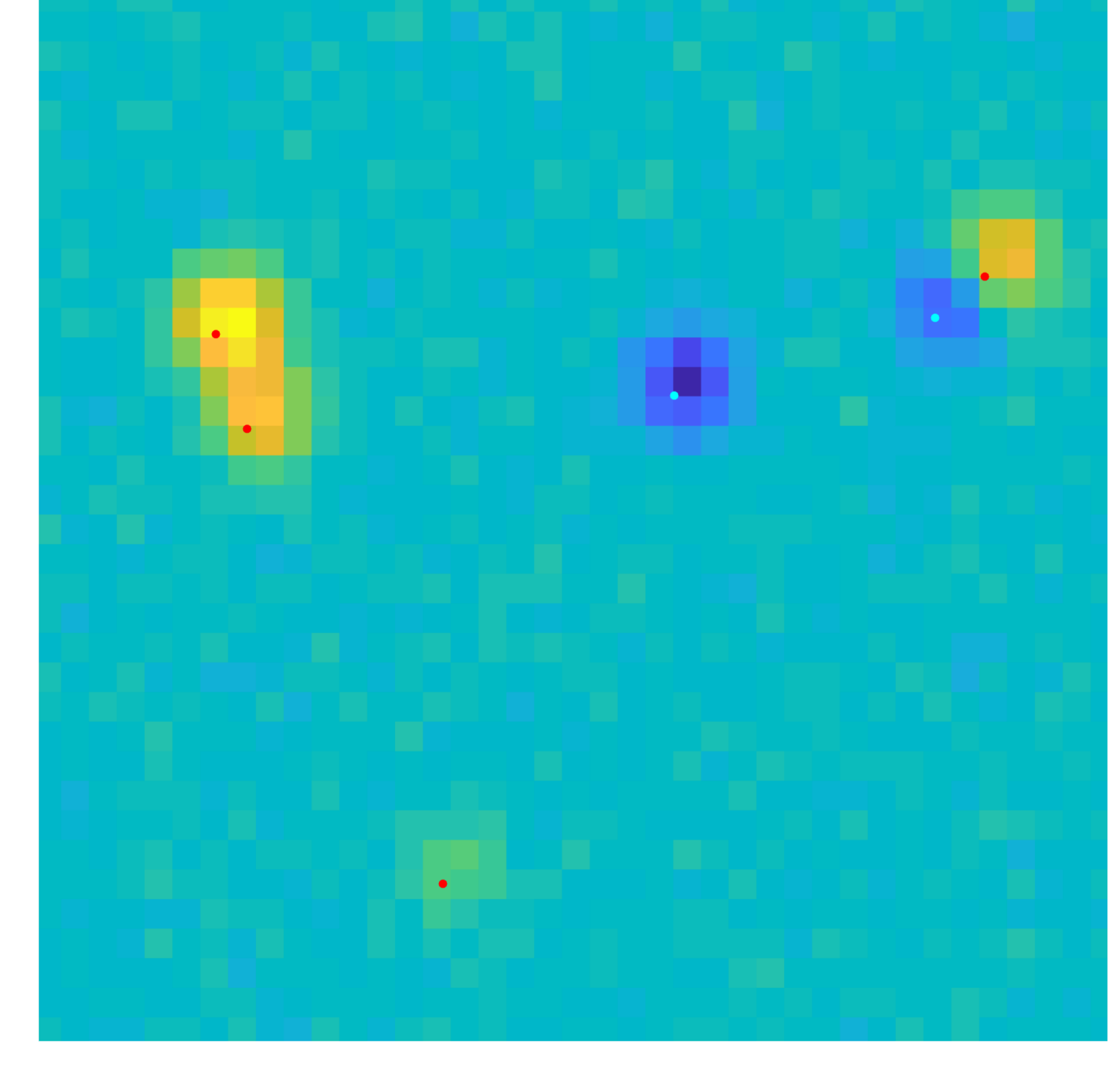}}
	\subfloat[$f_c=30$, $\frac{\norm{w}}{\norm{y_0}} = 10^{-3}$, $\Gg : 64\times64$]{\includegraphics[width=0.25\textwidth]{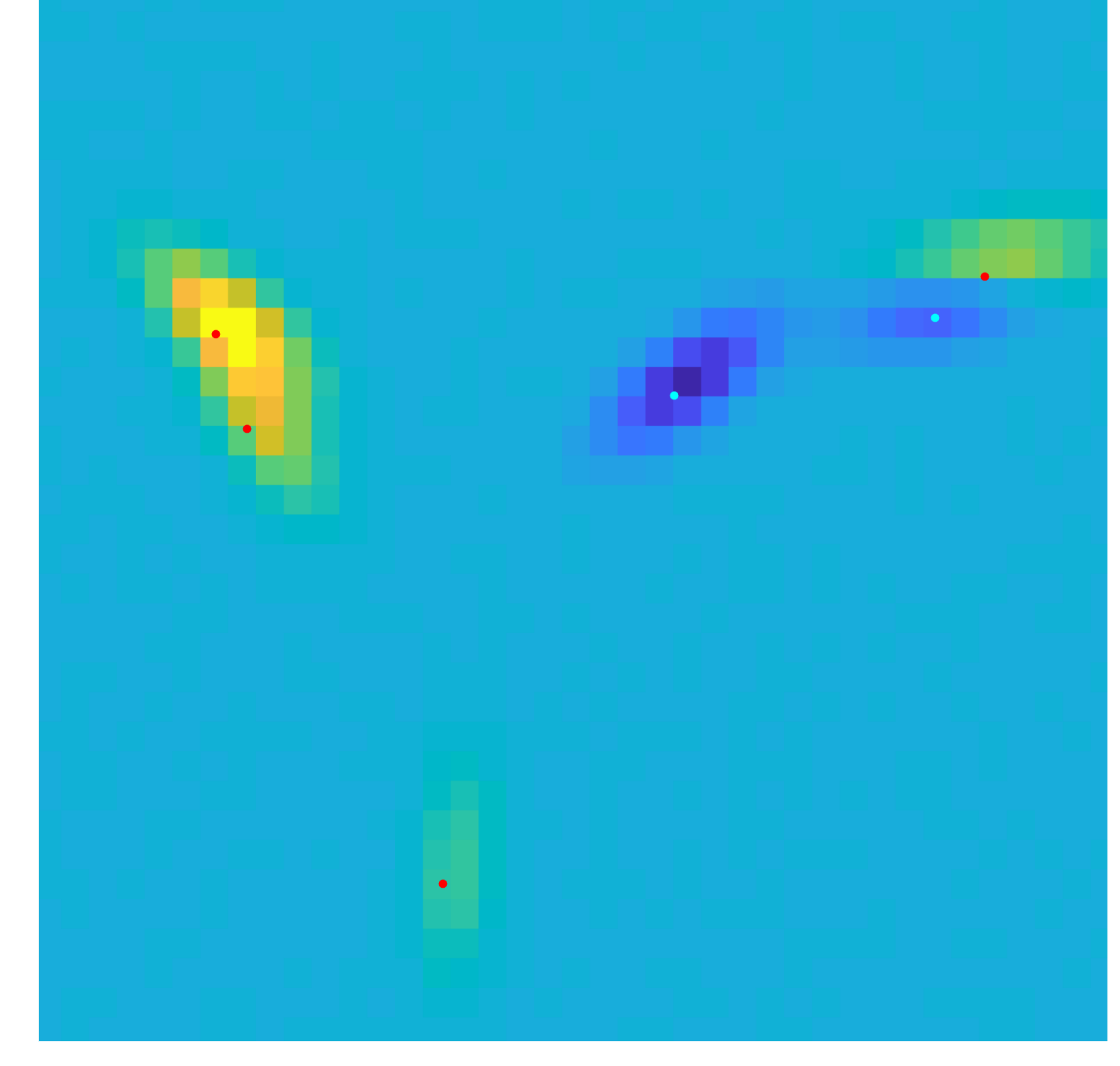}}\\
	\subfloat[$\la_0 = 2.10^{-3}$, $\rho=10^3$]{\includegraphics[width=0.25\textwidth]{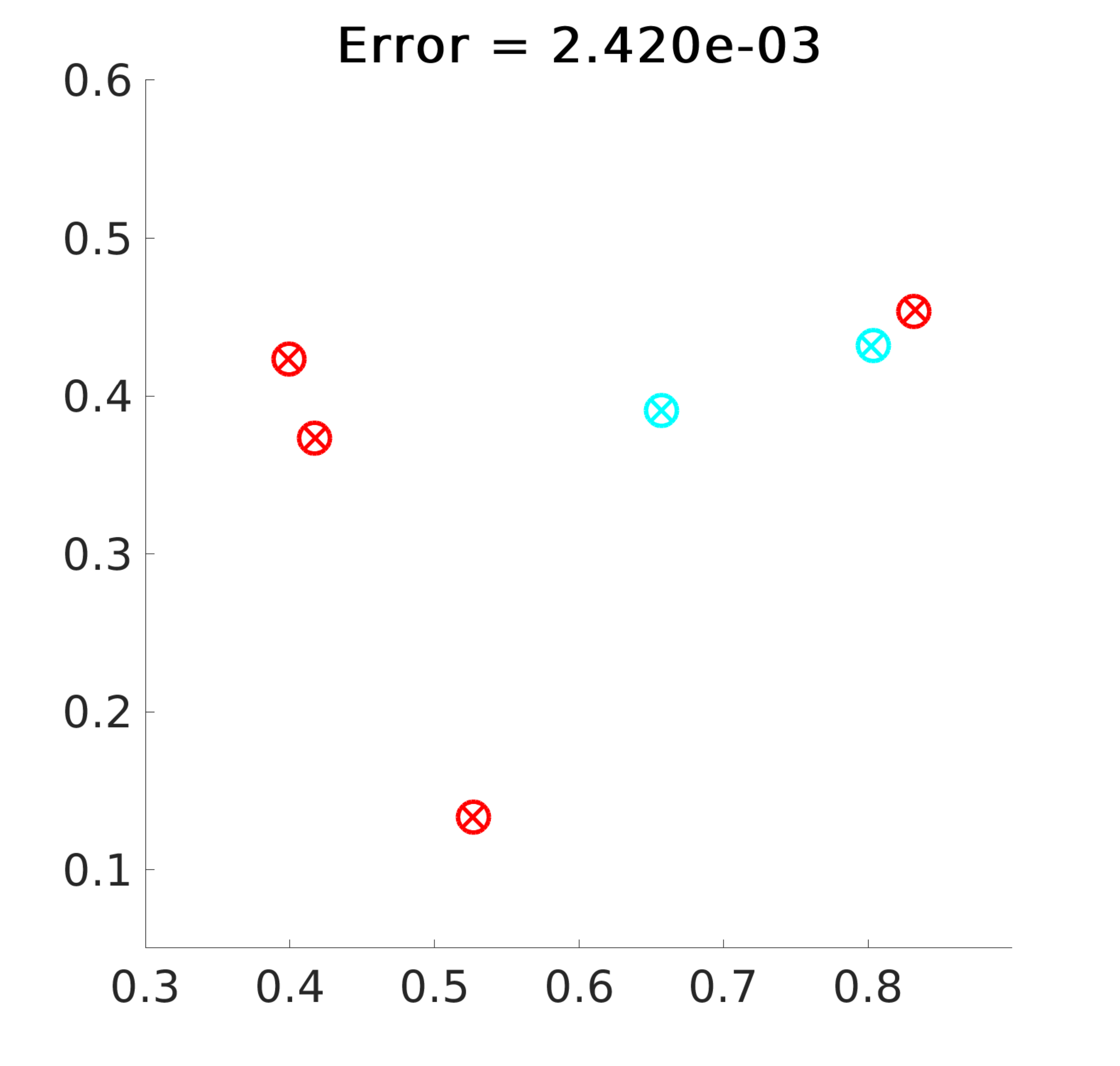}}
	\subfloat[$\la_0 = 2.10^{-3}$, $\rho=10^3$]{\includegraphics[width=0.25\textwidth]{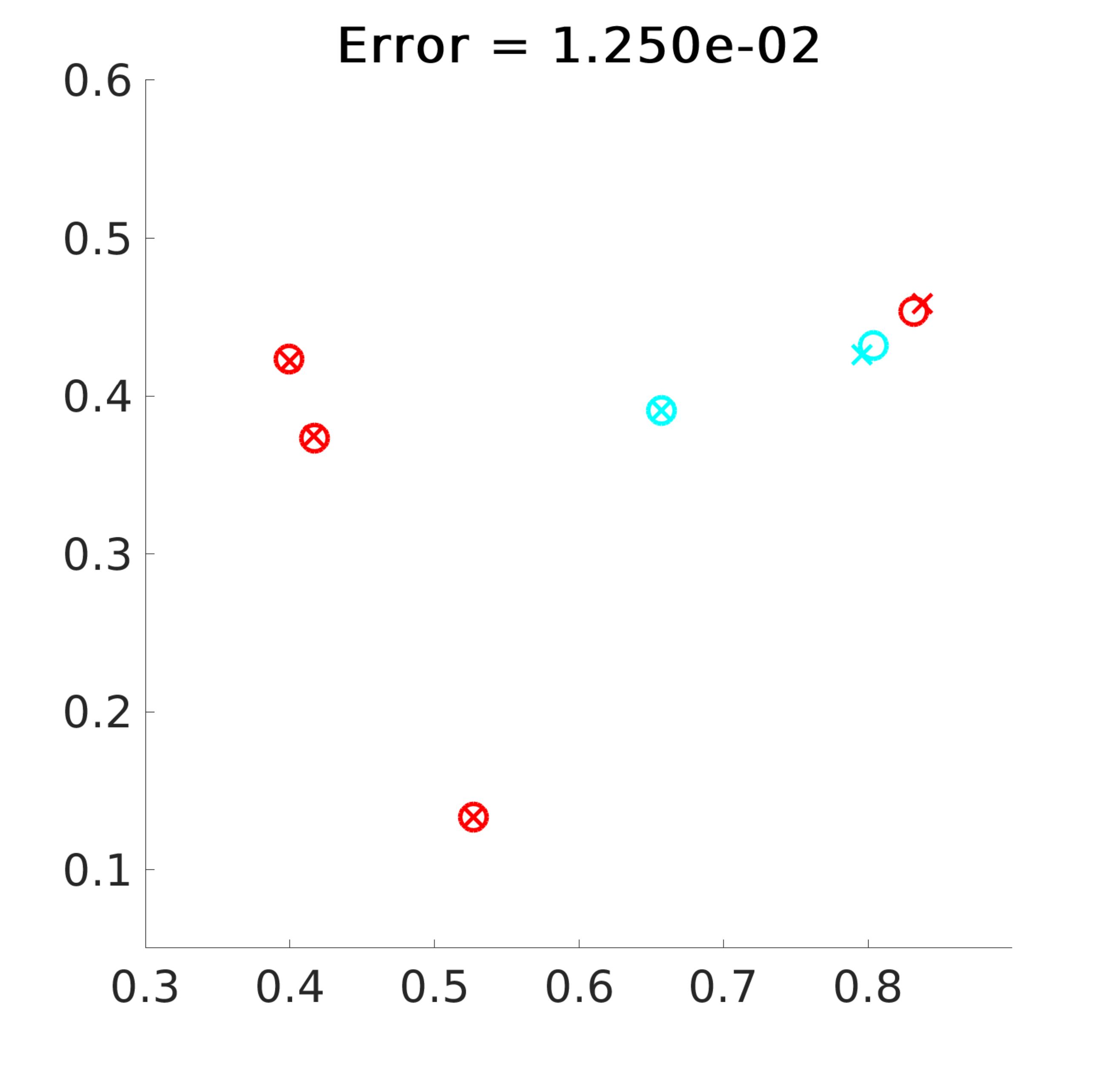}}
	\subfloat[$\la_0 = 10^{-3}$, $\rho=10^4$]{\includegraphics[width=0.25\textwidth]{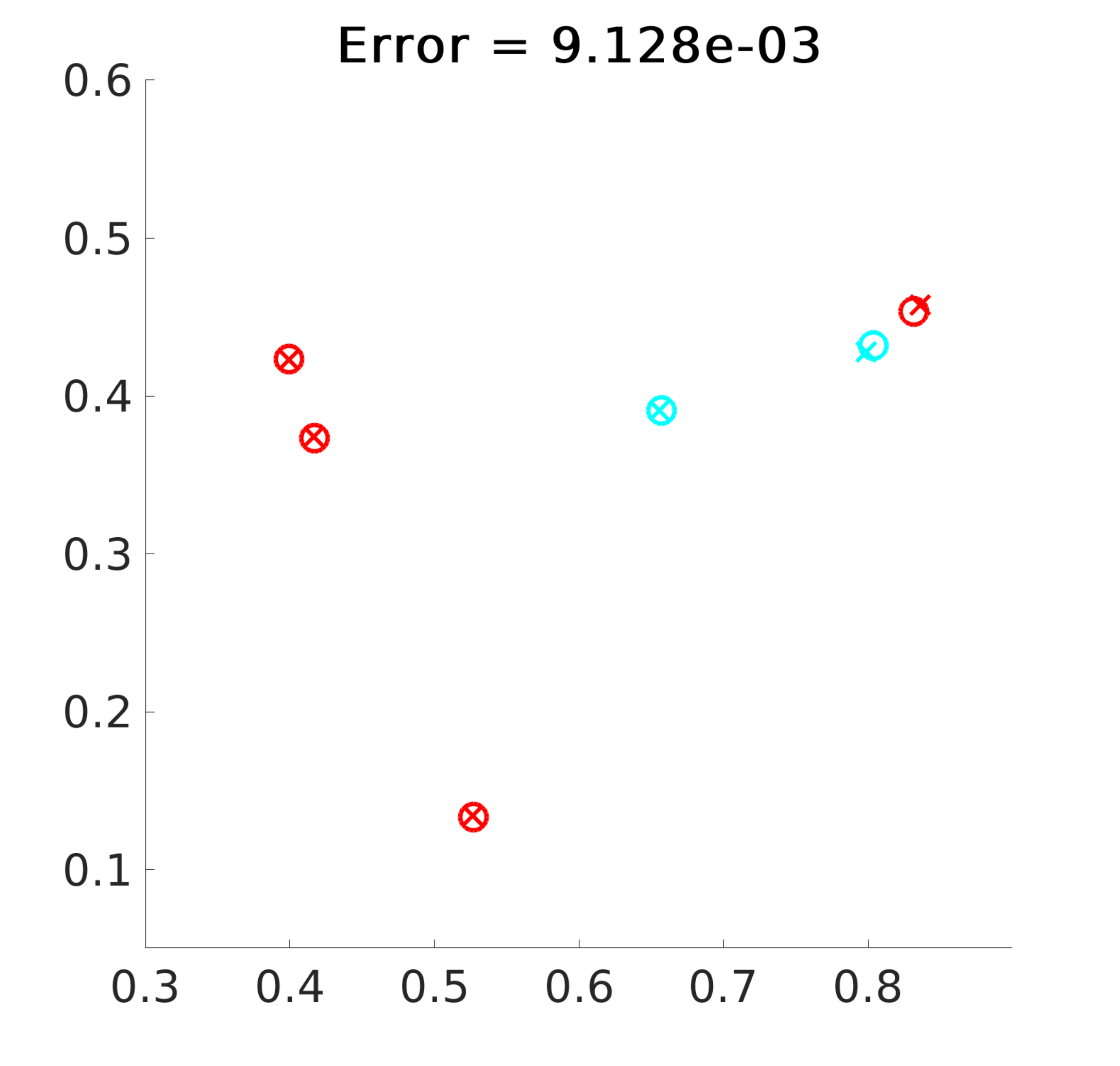}}
	\subfloat[$\la_0 = 10^{-3}$, $\rho=10^4$]{\includegraphics[width=0.25\textwidth]{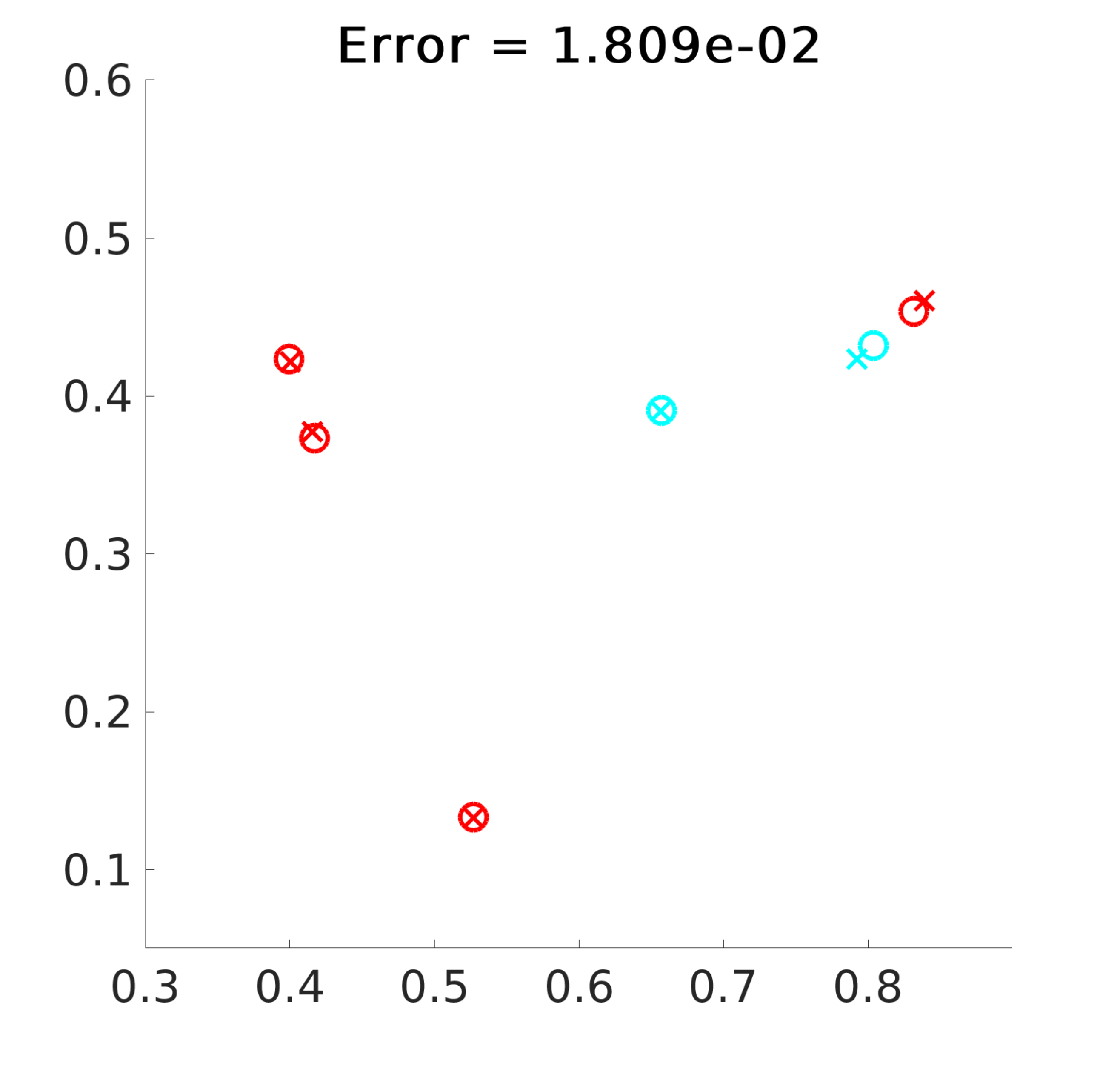}}
	\caption{From left to right: Measurements $y = \Phi\mu_0 + w$ (we plot $\Ff_c^* y$ in the first two figures) in the case of Dirichlet convolution, Gaussian convolution, Subsampled Gaussian convolution, and (Subsampled) Gaussian foveation. The support of $\mu_0$ is represented by red (positive spikes) and blue (negative spikes) dots. On the bottom line, the indicated errors are defined as $\norm{x_0 - x_r}/\norm{x_0}$, $x_0$ and $x_r$ being respectively the ground-truth and the reconstructed supports.}
	\label{fig:examples}
\end{figure}

\paragraph{Finite convergence} Remarkably, FFW converges in very few steps, usually as many as the number of spikes composing the solutions. Fig.~\ref{fig:finite-convergence} shows the number of FFW iterations with respect to the sparsity of the initial measure, averaged over $200$ random trials, \ie random positions and random amplitudes. The red curve correspond to a subset of these trials for which the minimal separation distance is greater than $1/f_c$. We see that in these simpler cases, FFW converges exactly in $r$-steps, $r$ being the number of spikes composing the solution.

\begin{figure}
	\centering
	\includegraphics[width=0.5\textwidth]{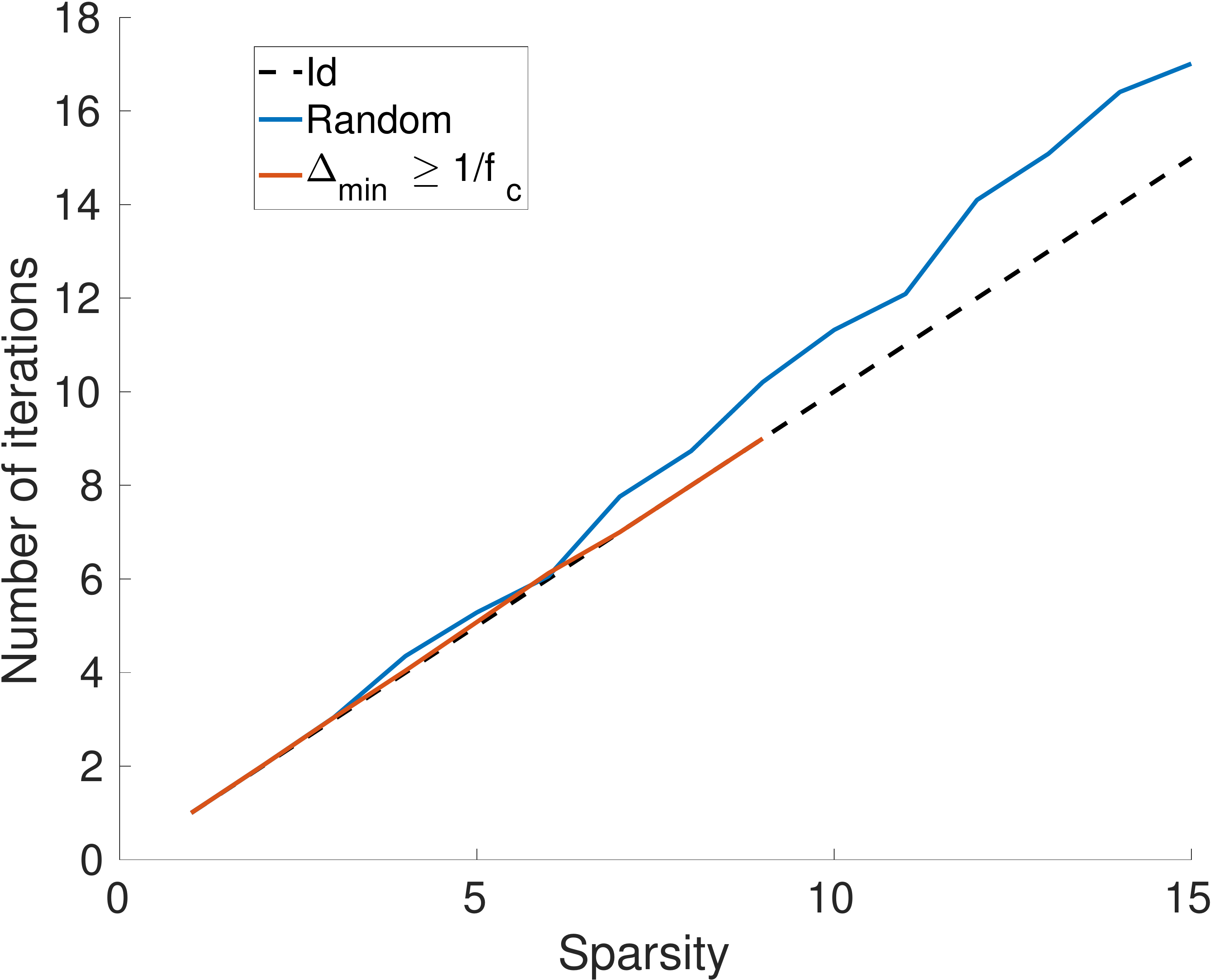}
	\caption{Number of FFW iterations with respect to sparsity of the initial measure.}
	\label{fig:finite-convergence}
	\postimagespace
\end{figure} 

\paragraph{Performance} Several metrics can be used to measure the recovery performance of FFW. We use two of them in our tests: the Jaccard index \cite{jaccard01}, and the flat norm \cite{Federer}, also called dual bounded Lipschitz norm. The Jaccard index measures the similarity between the initial (finite) support $S_0 \subset \Torus^d$ and the (finite) reconstructed support $S_r \subset \Torus^d$. It is defined as
\eq{
	J \eqdef \frac{\abs{S_0 \cap S_r}}{\abs{S_0 \cup S_r}} = \frac{\abs{S_0 \cap S_r}}{\abs{S_0}+\abs{S_r}-\abs{S_0 \cap S_r}}.
}
The value $\abs{S_0 \cap S_r}$ is determined with respect to some tolerance $\de$: given $x_0 \in S_0$, we consider that $x_r \in S_r$ belongs to $S_0 \cap S_r$ if $\norm{x_0 - x_r} \leq \de$ ($x_r$ is called a \emph{true positive}), and $x_0$ cannot be associated to more than one point in $S_r$. In our tests using the Jaccard index (see Section~\ref{sec:real-data-tests}), we set $\de = 10^{-2}$. The flat metric on the other hand is an optimal transport based metric, which we use to measure how close the reconstructed signal is from the source signal (with respect to both positions and amplitudes). It can be computed using linear porgramming.

The first plot in Fig.~\ref{fig:perf-cost} shows the flat distance between the measure $\mu_{\la,\rho}$ reconstructed by FFW, and a solution $\mu_\la$ of \eqref{eq:blasso} computed using MOSEK. The results are averaged over $680$ trials (random positions, random amplitudes and random sparsities in $\segi{2}{8}$), and sorted with respect to the minimal separation distance, either lower than $1/f_c$ (dashed line, 417 cases) or greater (solid line, 263 cases). As expected, the quality of the reconstruction decreases as the relaxation parameter $\rho$ increases. On the other hand, the computational cost, represented in the second figure in terms of total number of FFT performed, decreases as $\rho$ increases. For this experiment, the maximum number of BFGS iterations was set to 1000. This cost comes essentially from the BFGS iterations. In all our $1$D tests, setting $\rho$ between  $1$ and $10$ gave good performance. The sweet spot seems to strongly depend on the dimension $d$: for $d=2$, better performance is achieved with $\rho$ of the order of $10^3$ or $10^4$, see Fig.~\ref{fig:bench}.

\begin{figure}
	\centering
	\captionsetup[subfigure]{labelformat=empty}
	\subfloat[]{\includegraphics[width=0.48\textwidth]{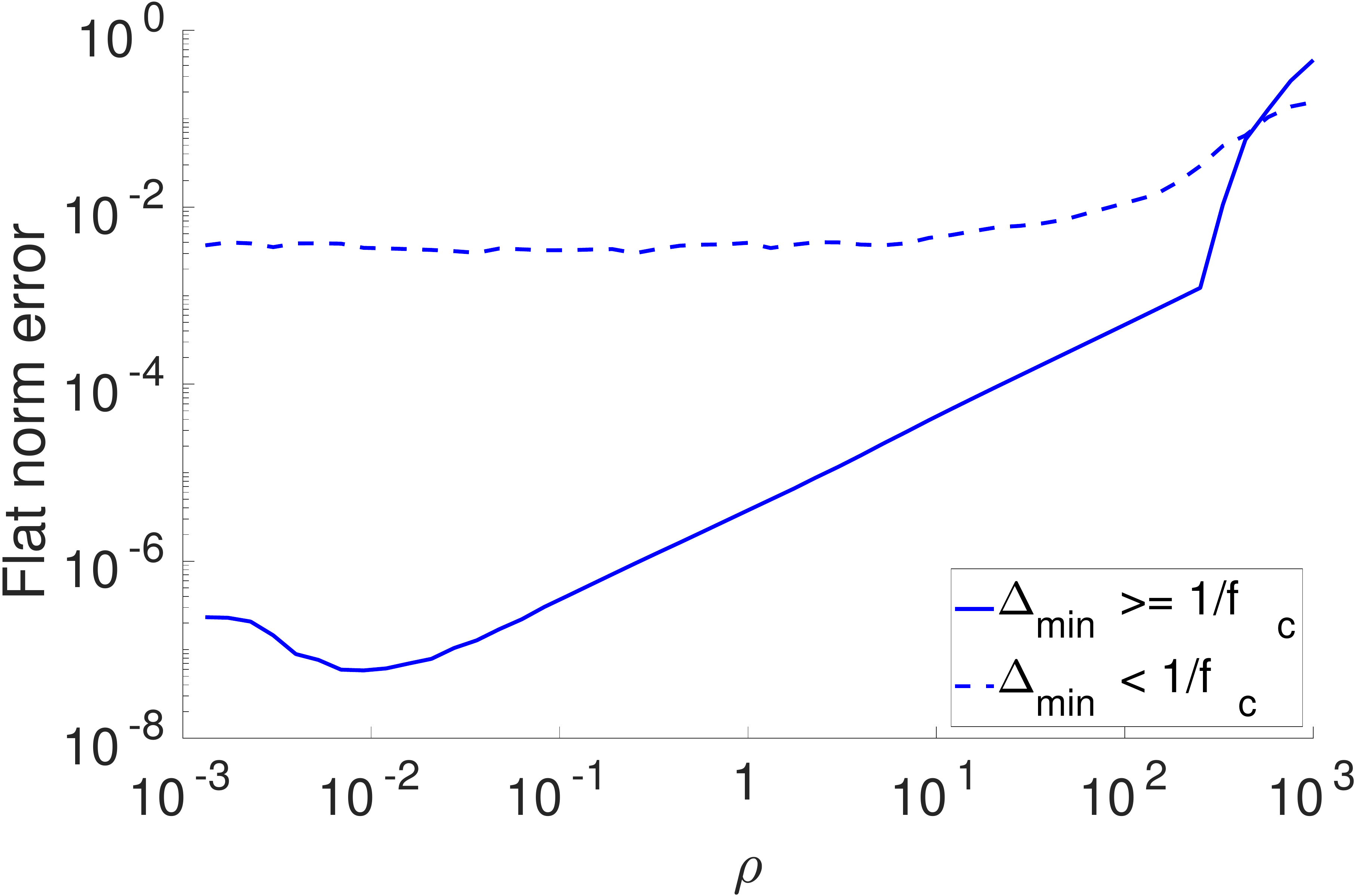}}
	\subfloat[]{\includegraphics[width=0.48\textwidth]{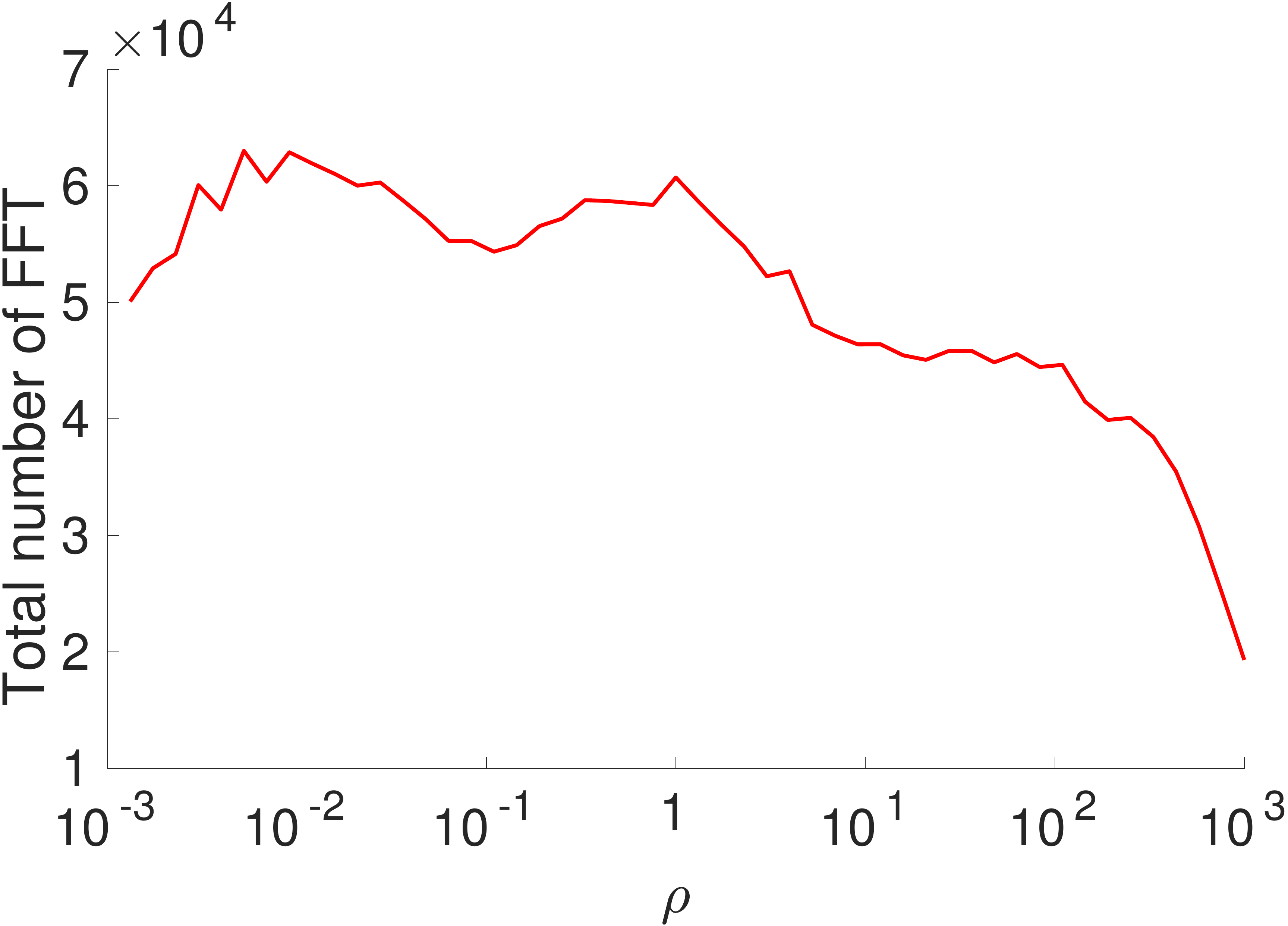}}
\caption{Performance (left) and computational cost (right).}
\label{fig:perf-cost}
\postimagespace
\end{figure} 


\subsection{Tests on SMLM data}
\label{sec:real-data-tests}
We present some results of FFW applied to data taken from the SMLM challenge \cite{smlm}. Fig.~\ref{fig:smlm-example} shows an example of reconstruction for one image of the challenge. On these data, the performance is measured by the Jaccard index, since the challenge gives information only about the locations (and not the amplitudes) of the Dirac masses.

\begin{figure}
	\centering
	\captionsetup[subfigure]{labelformat=empty}
	\subfloat[]{\includegraphics[width=0.4\textwidth]{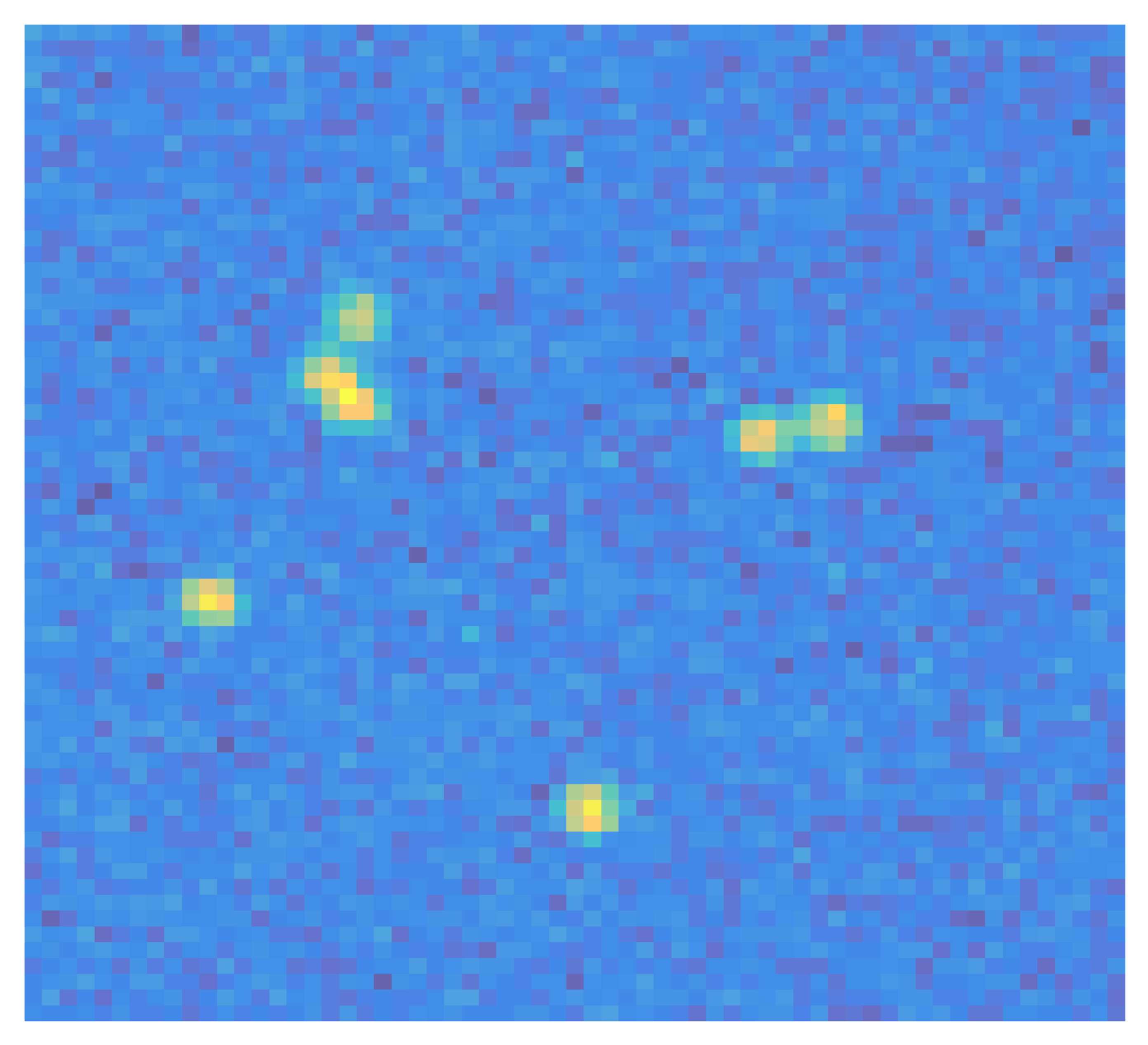}}
	\subfloat[]{\includegraphics[width=0.44\textwidth]{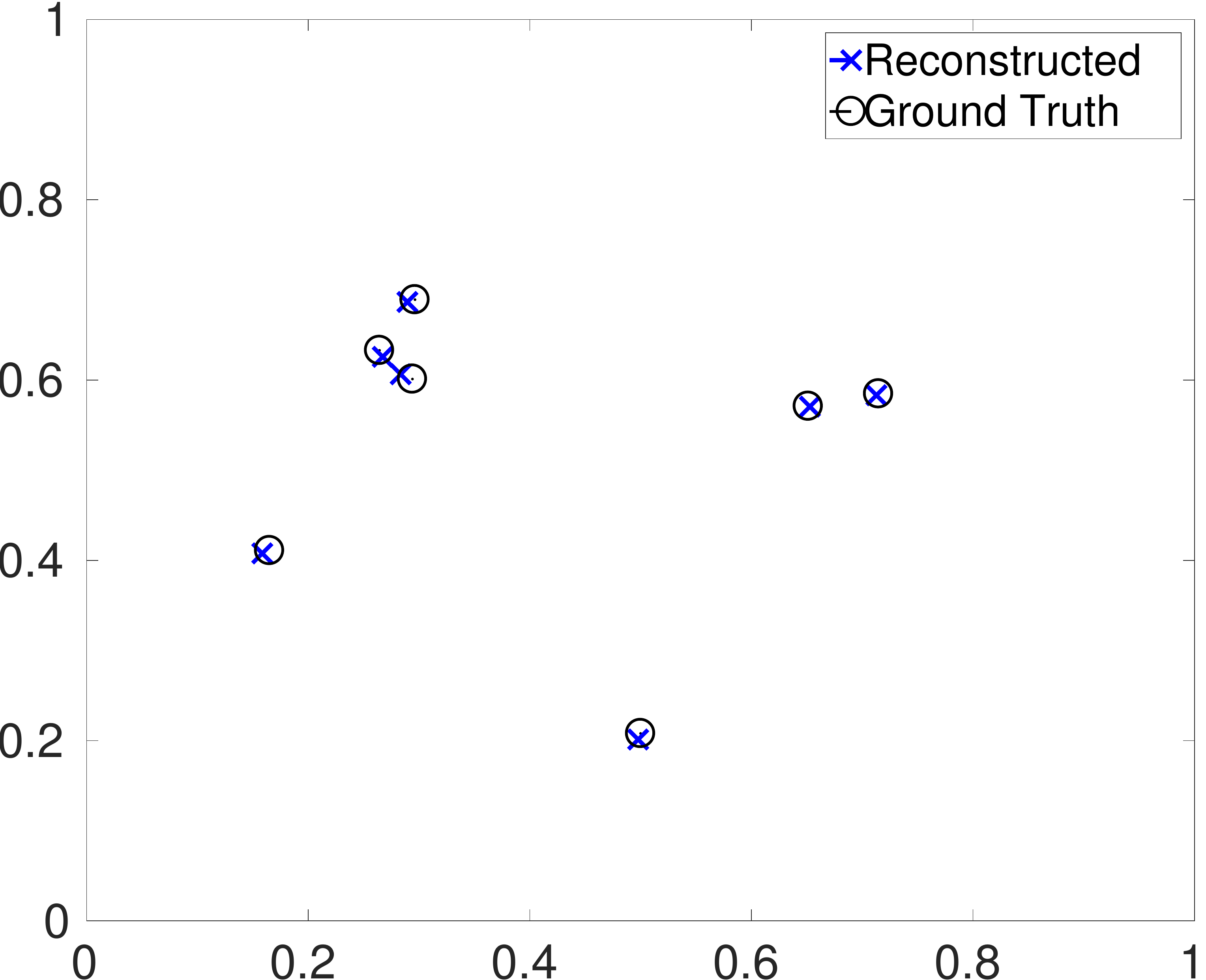}}
	\caption{Example of reconstruction on data from the smlm challenge. Relative error is $\norm{x_{\text{rec}}-x_0}/\norm{x_0} = 1.57 \times 10^{-2}$}
	\label{fig:smlm-example}
\end{figure}

In FFW, the most costly step is the BFGS step. Fig.~\ref{fig:bfgs-step} shows the impact of diminishing the maximum number of BFGS iterations  on the quality of the reconstruction (measured in terms of Jaccard index). The red solid line represents the time taken by FFW, in seconds, and the dashed line the time spent in the BFGS iterations. Results are averaged over $20$ random images taken from the challenge. We see that a low bound on the number of BFGS iterations strongly deteriorates the performance. On the other hand, we do not gain much by setting this bound higher than $250$.

\begin{figure}
	\centering
	\includegraphics[width=0.5\textwidth]{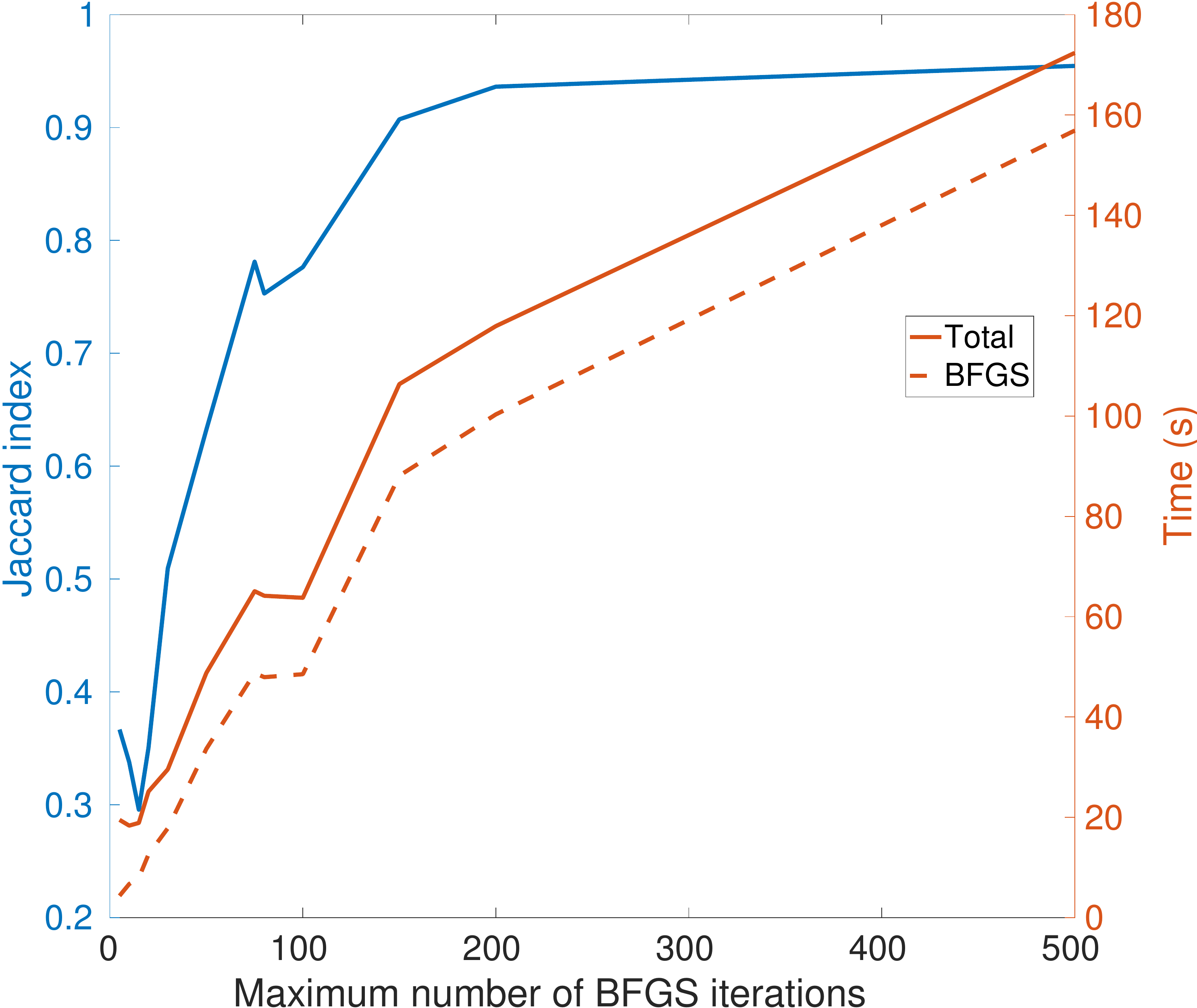}
	\caption{Perfomance versus maximum number of BFGS iterations}
	\label{fig:bfgs-step}
\end{figure}

Finally, Figure~\ref{fig:bench} shows the Jaccard index with respect to parameters $\rho$ and $\la_0$. Each pixel is obtain by averaging over 20 random images taken from the challenge. This gives an idea on the range of choices for $\rho$ and $\la_0$ in which FFW performs well. Although the choices for $\lambda$ does not change much following the different settings (kernel, dimension), the best values for $\rho$ seems to depend on $d$, as mentioned above.

\begin{figure}
	\centering
	\includegraphics[width=0.5\textwidth]{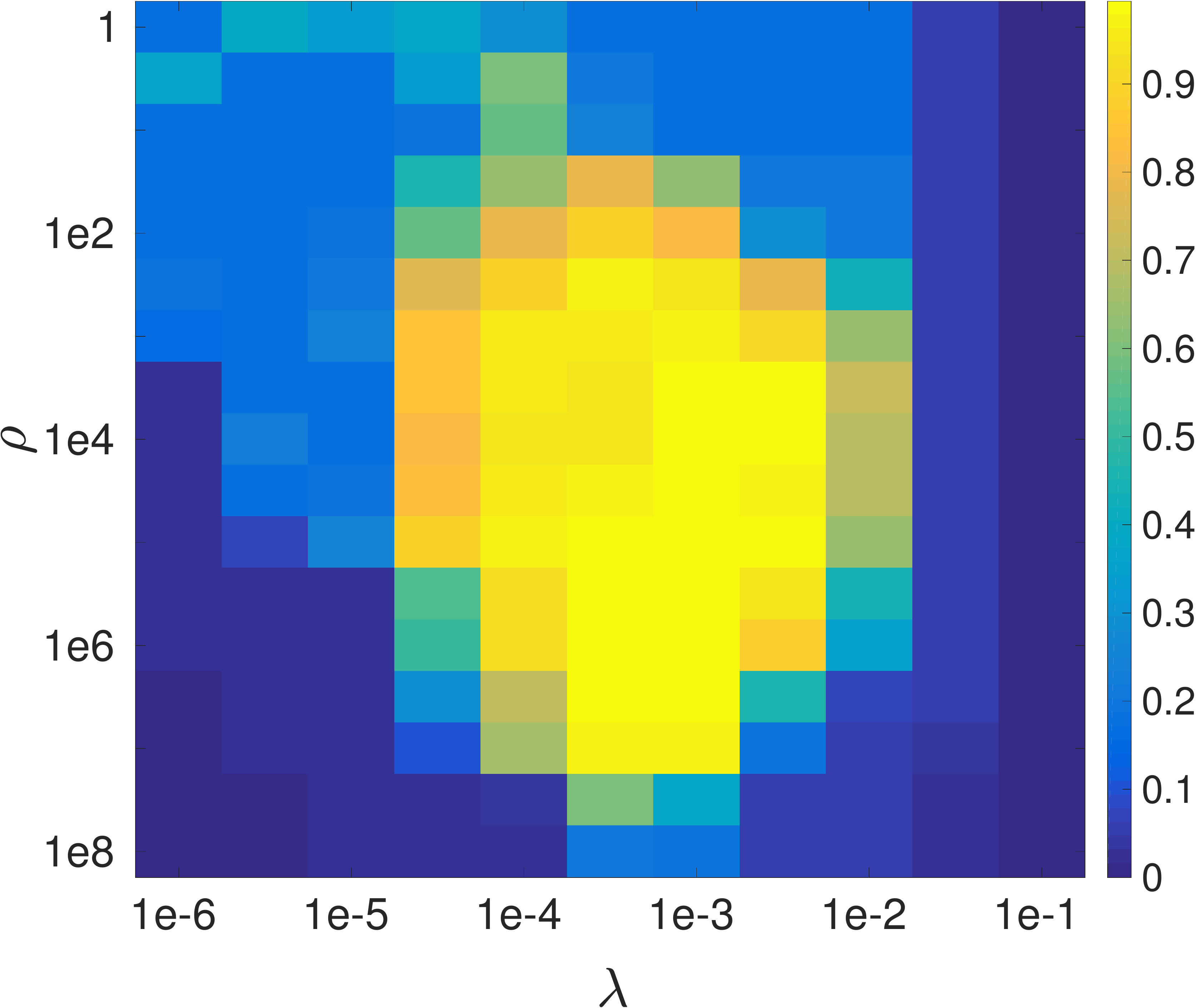}
	\caption{We measure the performance (in terms of Jaccard index) of FFW with respect to parameters $\la$ and $\rho$. Each pixel is obtained by averaging over 20 images.}
	\label{fig:bench}
\end{figure}

\section{Conclusion}
\label{sec:conclusion}

In this paper, we introduce a numerical solver for the sparse super-resolution problem in dimension greater than one and in various settings, including deconvolution. Our approach is based on the one hand on a spectral approximation of the forward operator, and on the other hand on a hierarchy of semidefinite relaxations of the initial problem, inspired by the Lasserre hierarchy \cite{Lasserre}. Low-rank solutions of the resulting SDP are then extracted using a FFT-based Frank-Wolfe method, which is scalable with the dimension. An interesting line of future works could be to extend the FFW algorithm to other problems that fit into the semidefinite hierarchies framework, such as optimal transport.

\section*{Acknowledgements}

This work was supported by grants from R\'egion Ile-de-France and the ERC project NORIA.

\appendix
\section{Proof of Theorem 1}
\label{app:flatness}

In this section, we prove Theorem~\ref{thm:flatness}, that is, we explain the connection between flat extensions (in the sense of Definition~\ref{def:flat-matrix}) and the existence of a representing measure $\nu$. The proof essentially relies on the ideas of Curto and Fialkow \cite{curto91,curto02,curto96} for the truncated $K$-moment problem (see also the variant \cite{laurent05} by Laurent) though our problem does not exactly fit into the framework of the aforementioned articles. While experts in this topic should have no difficulty in filling he gap, we present here some detail which might help the non-specialists.

In the monograph \cite{curto96}, the authors consider moments matrices with multivariate polynomials in $(z,\conj{z})$. Seeing the torus $\TT$ as the complex unit circle makes it possible to reformulate our problem to their setting, except that their moment matrices are indexed by monomials with total degree less than $\ell$,
\eql{
	\jm_1, \jm_2 \in \NN^d, \qandq \deg_1(\conj{\Z}^{\jm_1}\Z^{\jm_2}) \eqdef j_{1,1} + j_{2,1} + \ldots + j_{1,d} + j_{2,d} \leq \ell,
}
whereas the indices in our framework are naturally selected by their maximum degree,
\eql{
	\im \in \ZZ^d, \qandq \deg_{\infty}(\Z^\im) \eqdef \abs{\im}_{\infty} \eqdef \max(\abs{i_1}, \ldots, \abs{i_d}) \leq \ell.
}
On the contrary, \cite{laurent09} handles moment matrices indexed with more general sets of indices, but their analysis is given in the real case.

In the following, in order to derive Theorem~\ref{thm:flatness}, we combine the ideas of \cite{laurent09} and \cite{curto02}. We consider a positive Borel measure $\nu$ defined on the torus, and we see it alternatively as a measure in the complex plane or in the real plane, going through the following different moment matrices.

\paragraph{Trigonometric moment matrices} The moment matrices considered in this paper are mainly trigonometric moment matrices. Given a positive measure on $\Torus^d$, its (trigonometric) moment matrix $M_{\Torus}$ has entries
\eql{
	(M_{\Torus})_{\im,\jm} = \int_{\Torus^d} e^{-2i\pi\dotp{\im}{t}} e^{2i\pi\dotp{\jm}{t}} \diff\nu_{\Torus}(t) = \int_{\Torus^d}e^{2i\pi\dotp{\jm-\im}{t}} \diff\nu_{\Torus}(t),
}
for $\im, \jm \in \ZZ^d$, $\abs{\im}_{\infty}, \abs{\jm}_{\infty} \leq \ell$. Such matrices are \emph{generalized T\oe{}plitz} in the sense that for all admissible multi-indices,
\eql{\label{eq:gentoep-T}
	(M_{\Torus})_{\im+\sm,\jm} = (M_{\Torus})_{\im,\jm-\sm}.
}
For fixed $\jm$, as the column $(M_{\Torus})_{\cdot,\jm}$ contains the moments of the measure $e^{2i\pi\dotp{\jm}{t}}\diff\nu_{\Torus}(t)$, we say that it corresponds to the monomial $\Z^\jm$. The flatnesss property (Definition~\ref{def:flat-matrix}) is equivalent to the fact that for all $\jm \in \segi{-\ell}{\ell}^d \setminus \segi{-(\ell-1)}{\ell-1}^d$ the column $\Z^\jm$ is a linear combination of the set of columns $\enscond{Z^{\jm'}}{\jm' \in \segi{-(\ell-1)}{\ell-1}^d}$.

\paragraph{Moment matrices in the complex plane} Given a measure $\nu_{\CC}$ defined on $\CC^d$, one may consider its moments against the variable $\Z$ and its conjugate $\conj{\Z}$, namely
\eql{\label{eq:moment-C}
	(M_{\CC})_{(\im_1, \im_2), (\jm_1, \jm_2)} = \int_{\CC^d} (\conj{z}^{\im_2} z^{\im_1})(\conj{z}^{\jm_1} z^{\jm_2}) \diff \nu_{\CC}(z) = \int_{\CC^d} \conj{z}^{\jm_1+\im_2} z^{\im_1+\jm_2} \diff \nu_{\CC}(z),
}
for $\im_1, \im_2, \jm_1, \jm_2 \in \NN^d$ such that $\max(\im_1 + \im_2) \leq \ell$ and $\max(\jm_1+\jm_2) \leq \ell$. Such matrices have a structure property recalling that of Hankel matrices,
\eql{\label{eq:genhank-C}
	(M_{\CC})_{(\im_1+\mathbf{r}_1, \im_2+\mathbf{r}_2), (\jm_1, \jm_2)} = (M_{\CC})_{(\im_1, \im_2), (\jm_1+\mathbf{r}_1, \jm_2+\mathbf{r}_2)}.
}
Similarly as above, we note that the columns $(M_{\CC})_{\cdot, (\jm_1, \jm_2)}$ corresponds to the monomial $\conj{\Z}^{\jm_1}\Z^{\jm_2}$. We say that $M_{\CC}$ is \emph{flat} if the columns corresponding to $\conj{\Z}^{\jm_1}\Z^{\jm_2}$, where $\max (\jm_1+\jm_2) = \ell$, are linear combinations of the columns $\conj{\Z}^{\jm_1'}\Z^{\jm_2'}$ where $\max (\jm_1' + \jm_2') \leq \ell -1$.

\paragraph{Moment matrices in the real plane} Given a measure $\nu_{\RR^2}$ defined on $(\RR^2)^d$, we consider its moments against the variables $\Xm$ and $\Ym$, namely
\eql{\label{eq:moment-R}
	(M_{\RR^2})_{(\im_1, \im_2), (\jm_1, \jm_2)} = \int_{(\RR^d)^2} (x^{\im_1}y^{\im_2})(x^{\jm_1}y^{\jm_2}) \diff \nu_{\RR^2}(x,y) = \int_{(\RR^d)^2} (x^{\im_1+\jm_1}y^{\im_2+\jm_2}) \diff \nu_{\RR^2}(x,y)
}
for $\im_1, \im_2, \jm_1, \jm_2 \in \NN^d$ such that $\max (\im_1+\im_2) \leq \ell$ and $\max (\jm_1+\jm_2) \leq \ell$. Such moment matrices have the \emph{generalized Hankel} property, that is
\eql{\label{eq:genhank-R}
	(M_{\RR^2})_{(\im_1+\mathbf{r}_1, \im_2 + \mathbf{r}_2), (\jm_1, \jm_2)} = (M_{\RR^2})_{(\im_1, \im_2), (\jm_1+\mathbf{r}_1, \jm_2 + \mathbf{r}_2)}.
}
For fixed $\jm_1,\jm_2 \in \NN^d$, the column $(M_{\RR^2})_{\cdot, (\jm_1, \jm_2)}$ corresponds to the monomial $\Xm^{\jm_1}\Ym^{\jm_2}$. As above, we say that $M_{\RR^2}$ is \emph{flat} if the columns corresponding to $\Xm^{\jm_1}\Ym^{\jm_2}$, where $\max (\jm_1+\jm_2) = \ell$, are linear combinations of the columns $\Xm^{\jm_1'}\Ym^{\jm_2'}$ where $\max (\jm_1' + \jm_2') \leq \ell - 1$.

\subsection{From the torus to the complex plane}
Given a positive Borel measure $\nu \eqdef \nu_{\Torus}$ on $\Torus^d$, we may see it as a measure in $\CC^d$ by considering its image measure by $T$, $\nu_{\CC} \eqdef T_{\#}\nu_{\Torus}$, where
\eql{
	T : \Torus^d \rightarrow \CC^d, \quad (t_1, \ldots, t_d) \mapsto (e^{2i\pi t_1}, \ldots, e^{2i\pi t_d}).
}
The resulting measure has support in $(\SS^1)^d$, where $\SS^1 \eqdef \enscond{z \in \CC}{\abs{z}=1}$. We recall that the image measure $\nu_{\CC} = T_{\#}\nu_{\Torus}$ is characterized by $\nu_{\CC}(B) = \nu_{\Torus}(T^{-1}(B))$ for all Borel sets, so that for all $\nu_\CC$-summable function $\psi$,
\eql{\label{eq:push-fwd}
	\int_{\CC^d} \psi(z) \diff\nu_\CC(z) = \int_{\Torus^d} \psi(T(t)) \diff\nu_{\Torus}(t).
}

Obviously, the moment matrices $M_{\Torus}$ and $M_{\CC}$ have different sizes. However, the following is a first step in relating them.
\begin{lemma}
	Let $\ell \geq 2$, and let $M_\CC$ be a  moment matrix representing some positive Borel measure $\nu_\CC$ on $\CC^d$. Then $\Supp \nu_\CC \subset (\SS^1)^d$ if and only if the columns corresponding to $\conj{Z}^{\jm_1}Z^{\jm_2}$ and $\conj{Z}^{\jm_1'}Z^{\jm_2'}$ are equal for all multi-indices such that $\jm_1-\jm_2 = \jm_1'-\jm_2'$, where $\abs{\jm_1+\jm_2}_{\infty} \leq \ell$, $\abs{\jm_1'+\jm_2'}_{\infty} \leq \ell$.
\label{lem:redundancy}
\end{lemma}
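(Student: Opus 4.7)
The plan is to handle both implications by direct manipulation of the integral representation \eqref{eq:moment-C}, exploiting that $\conj{z_k}z_k = 1$ on $(\SS^1)^d$.

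For the necessity part, I would use that if $\Supp \nu_\CC \subset (\SS^1)^d$, then every monomial $\conj{z}^a z^b$ reduces $\nu_\CC$-a.e.\ to the Laurent monomial $z^{b-a}$. Substituting this into \eqref{eq:moment-C} collapses the entry $(M_\CC)_{(\im_1,\im_2),(\jm_1,\jm_2)}$ to the integral of $z^{(\im_1 - \im_2) + (\jm_2 - \jm_1)}$ against $\nu_\CC$, which depends on the column index only through $\jm_2 - \jm_1$. Two columns with $\jm_1 - \jm_2 = \jm_1' - \jm_2'$ must therefore coincide.

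For the converse, the plan is to specialize the column equality to a single well-chosen pair per coordinate. Fix $k \in \{1,\ldots,d\}$ and compare the columns indexed by $(\jm_1,\jm_2) = (0,0)$ and $(\jm_1',\jm_2') = (e_k,e_k)$; both are admissible precisely because $\abs{2e_k}_\infty = 2 \leq \ell$. Their equality amounts to
\[
\int_{\CC^d} \bigl(1 - \abs{z_k}^2\bigr) \conj{z}^{\im_2} z^{\im_1} \diff\nu_\CC(z) = 0
\]
for every admissible $(\im_1,\im_2)$. Specializing to $(\im_1,\im_2) = (0,0)$ and $(\im_1,\im_2) = (e_k,e_k)$ (again legal because $\ell \geq 2$) yields three equal quantities $\int \diff\nu_\CC = \int \abs{z_k}^2 \diff\nu_\CC = \int \abs{z_k}^4 \diff\nu_\CC$, whence $\int_{\CC^d} (1 - \abs{z_k}^2)^2 \diff\nu_\CC = 0$. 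Nonnegativity of the integrand then forces $\abs{z_k} = 1$ $\nu_\CC$-a.e., and letting $k$ vary gives the desired inclusion.

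I do not foresee any serious obstacle: both directions reduce to short algebraic manipulations of moments. The only delicate point is ensuring that the specific pairs of multi-indices used in the converse all sit inside the truncated index set of $M_\CC$, which is precisely the role of the assumption $\ell \geq 2$.
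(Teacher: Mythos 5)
Your proof is correct and follows essentially the same route as the paper: the forward direction reduces monomials on $(\SS^1)^d$ to Laurent monomials depending only on $\jm_2-\jm_1$, and the converse compares the column of $1$ with that of $\conj{Z_k}Z_k$ and evaluates at the rows $(0,0)$ and $(e_k,e_k)$ to get $\int(1-\abs{z_k}^2)^2\diff\nu_\CC=0$, exactly as in the paper's argument (which expands the same square and uses the same two entry identities). No gaps; your explicit remark on where $\ell\geq 2$ enters is consistent with the paper's use of it.
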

\begin{proof}
The column $\conj{Z}^{\jm_1}Z^{\jm_2}$ contains elements of the form $\int_{\CC^d}(\conj{z}^{\im_1}z^{\im_2})\conj{z}^{\jm_1}z^{\jm_2} \diff\nu_\CC(z)$ for $\im_1, \im_2 \in \NN^d$. If $\Supp \nu_\CC \subset (\SS^1)^d$, then $\conj{z}^{j_{1,k}}z^{j_{2,k}} = \conj{z}^{j_{1,k}'}z^{j_{2,k}'}$ for all $z$ in the integration domain and all $k \in \segi{1}{d}$, hence
\eql{
	\int_{\CC^d}(\conj{z}^{\im_1}z^{\im_2})\conj{z}^{\jm_1}z^{\jm_2} \diff\nu_\CC(z) = \int_{\CC^d}(\conj{z}^{\im_1}z^{\im_2})\conj{z}^{\jm_1'}z^{\jm_2'} \diff\nu_\CC(z),
}
and the two columns are equal.

Conversely, if the above-mentioned columns are equal, let $k \in \segi{1}{d}$, and $\jm \in \NN^d$ such that $j_{k'} = 1$ for $k' = k$, $0$ otherwise. Then,
\eql{
	\begin{aligned}
		\int_{\CC^d} (1-\abs{z_k}^2)^2 \diff\nu_\CC(z) 
			&= \int_{\CC^d} 1 \diff\nu_\CC(z) + \int_{\CC^d} (\conj{z}^\jm z^\jm)(\conj{z}^\jm z^\jm)\diff\nu_\CC(z) - 2\int_{\CC^d} (\conj{z}^\jm z^\jm)\diff\nu_\CC(z)\\
			&= \int_{\CC^d} 1 \diff\nu_\CC(z) - \int_{\CC^d} (\conj{z}^\jm z^\jm)\diff\nu_\CC(z)\\
			&= 0,
	\end{aligned}
}
using the equality between the columns $1$ and $\conj{Z}^\jm Z^\jm$ and the corresponding relations between their entries. Since the integrand is nonnegative, we deduce that $\nu$ charges only points where $\conj{z}^\jm z^\jm = 1$, \ie $\conj{z_k} z_k = 1$. As a result, 
$$\Supp \nu \subset \bigcap_{k=1}^d \enscond{z \in \CC^d}{\conj{z_k} z_k = 1} = (\SS^1)^d.$$
\end{proof}

Using Lemma~\ref{lem:redundancy}, we see that for all $\jm \in \ZZ^d$ such that $\abs{\jm}_\infty \leq \ell$, all the columns $(M_\CC)_{\cdot, (\jm_1, \jm_2)}$ such that $\jm_2 - \jm_1 = \jm$ (and $\max (\jm_1 + \jm_2) \leq \ell$ are equal. In fact, from \eqref{eq:push-fwd} we see that those columns are obtained by "repeating" the column $\jm$ of $M_\Torus$ at all indices such that $\jm_2-\jm_1 = \jm$ (and similarly for the rows).

More precisely, given a maximal degree $\ell$, let us denote the set of Laurent polynomials by
\eql{
	\CC_{\ell}[\conj{\Z}^{\pm 1}]  \eqdef \Span \enscond{\Z^\jm}{\jm \in \ZZ^d, \abs{\jm}_{\infty} \leq \ell},
}
the set of polynomials by
\eql{
	\CC_{\ell}[\Z,\conj{\Z}] \eqdef \Span \enscond{\conj{\Z}^{\jm_1}\Z^{\jm_2}}{\jm_1, \jm_2 \in \NN^d, \max (\jm_1 + \jm_2) \leq \ell}
}
and let $J$ denote the matrix of the operator
\eql{
	\CC_{\ell}[\Z,\conj{\Z}] \rightarrow \CC_{\ell}[\conj{\Z}^{\pm 1}], \quad \conj{\Z}^{\jm_1}\Z^{\jm_2} \mapsto \frac{1}{c(\jm_2-\jm_1)} \Z^{\jm_2-\jm_1}, 
}
where $c(\jm) = \Card \enscond{(\jm_1',\jm_2')}{\jm_1', \jm_2' \in \NN^d, \max (\jm_1' + \jm_2') \leq \ell, \jm_2' - \jm_1' = \jm}$. Then, from \eqref{eq:push-fwd}, the following relation holds,
\eql{\label{eq:congruency-T-C}
	M_\CC = J^*M_\Torus J.
}
We immediately deduce:

\begin{lemma}
\label{lem:equiv-T-C}
The relation $\nu_\CC = T_\#\nu_\Torus$ defines a one-to-one correspondence between (positive Borel) measures $\nu_\Torus$ on $\Torus^d$ and measures on $\CC^d$ supported on $(\SS^1)^d$. That correspondence preserves the cardinality of the support and one has $T(\Supp \nu_\Torus) = \Supp \nu_\CC$. Moreover their moment matrices are related by \eqref{eq:congruency-T-C}.

Conversely, let $M_\Torus$ and $M_\CC$ be matrices (indexed by $\CC_{\ell}[\conj{\Z}^{\pm 1}]$ and $\CC_{\ell}[\Z,\conj{\Z}]$ respectively) such that \eqref{eq:congruency-T-C} holds. Then,
\begin{enumerate}
\item $M_\Torus$ satisfies \eqref{eq:gentoep-T} if and only if $M_\CC$ satisfies  \eqref{eq:genhank-C},
\item $M_\CC \succeq 0$ if and only if $M_\Torus \succeq 0$,
\item $\rank M_\Torus = \rank M_\CC$. Moreover $M_\Torus$ is flat if and only if $M_\CC$ is flat.
\end{enumerate}
\end{lemma}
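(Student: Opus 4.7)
The plan is to separate the lemma into its direct half (the bijective correspondence between measures and the congruency formula for their moment matrices) and its converse half (the three structural equivalences), both of which reduce to elementary properties of $T$ and of the linear operator $J$.

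For the direct half, I would first note that $T : \Torus^d \to (\SS^1)^d$ is a continuous bijection between compact Hausdorff spaces, hence a homeomorphism. Standard pushforward theory then yields that $T_{\#}$ is a bijection between positive Borel measures on $\Torus^d$ and positive Borel measures on $\CC^d$ supported in $(\SS^1)^d$, with inverse $(T^{-1})_{\#}$. The identity $\Supp \nu_\CC = T(\Supp \nu_\Torus)$ and the preservation of cardinality of the support follow from the homeomorphism property. The congruency $M_\CC = J^* M_\Torus J$ would then be obtained by applying the change-of-variable formula~\eqref{eq:push-fwd} to the generic entry of $M_\CC$: after substituting $z = T(t)$ the integrand collapses to a pure exponential depending only on the differences $\im_2 - \im_1$ and $\jm_2 - \jm_1$, and the normalizations $1/c(\cdot)$ built into $J$ are precisely what is needed to match that reduction.

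For the converse half, the key structural fact I would exploit is that $J$ is surjective: for each $\jm \in \ZZ^d$ with $\abs{\jm}_{\infty} \le \ell$, the $\jm$-th row of $J$ is, up to a positive scalar, the indicator of the fiber $\{(\jm_1,\jm_2) : \jm_2 - \jm_1 = \jm\}$, and these fibers are pairwise disjoint, so the rows are linearly independent. This immediately handles item~(2) and the rank equality in item~(3): from $\dotp{v}{M_\CC v} = \dotp{Jv}{M_\Torus Jv}$, positivity of $M_\CC$ is equivalent to positivity of $M_\Torus$ (using that $Jv$ sweeps the whole target space as $v$ varies), and the classical chain $\rank(J^* M_\Torus J) = \rank(M_\Torus J) = \rank(M_\Torus)$, valid because $J^*$ is injective and $J$ is surjective, yields $\rank M_\CC = \rank M_\Torus$.

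Item~(1) and the flatness statement require slightly more care. For (1), I would check at the entry level that the formula $M_\CC = J^* M_\Torus J$ forces $(M_\CC)_{(\im_1,\im_2),(\jm_1,\jm_2)}$ to depend on the four indices only through $\im_2 - \im_1$ and $\jm_2 - \jm_1$; this directly translates the generalized Toeplitz property of $M_\Torus$ into the generalized Hankel property of $M_\CC$, and surjectivity of the difference map $(\im_1, \im_2) \mapsto \im_2 - \im_1$ supplies the converse. For flatness, $J$ restricts to a surjection from the space spanned by $\conj{\Z}^{\jm_1}\Z^{\jm_2}$ with $\max(\jm_1 + \jm_2) \le \ell - 1$ onto the space spanned by $\Z^{\jm}$ with $\abs{\jm}_{\infty} \le \ell - 1$, so the principal submatrices of $M_\CC$ and $M_\Torus$ indexed by those smaller subspaces are again related by a congruency and hence have equal rank; combining with the full-matrix rank identity already established gives the equivalence of flatness. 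The main obstacle I anticipate is the consistent bookkeeping of the multiplicities $c(\cdot)$ in the chosen monomial bases, especially in verifying that $J$ restricts correctly to the small-degree subspaces used in the flatness argument and that the normalization factors do not perturb the positivity and rank identities.
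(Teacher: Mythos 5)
Your proposal is correct and follows essentially the same route as the paper, which states the lemma as an immediate consequence of the change-of-variables formula \eqref{eq:push-fwd}, Lemma~\ref{lem:redundancy} and the congruency \eqref{eq:congruency-T-C}: you simply make explicit the linear-algebra ingredients the paper leaves implicit (the pushforward by the homeomorphism $T$, the surjectivity of $J$ and injectivity of $J^*$, and the restriction of the congruency to the lower-degree blocks for flatness). The bookkeeping issue you anticipate with the multiplicities $c(\cdot)$ is real but inherited from the paper's own normalization of $J$, and it affects neither positivity nor the rank identities on which your argument rests.
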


\subsection{From the complex plane to the real plane}
Now, given a positive Borel measure $\nu_{\CC}$ on $\CC^d$, we see it as a measure on $(\RR^2)^d$ by considering its image measure by $S$, $\nu_{\RR^2} \eqdef S_{\#}\nu_{\CC}$, where
\eql{
	S : \CC^d \rightarrow (\RR^2)^d, \quad (z_1, \ldots, z_d) \mapsto \left( \frac{z_1+\conj{z_1}}{2}, \frac{z_1-\conj{z_1}}{2i}, \ldots, \frac{z_d+\conj{z_d}}{2}, \frac{z_d-\conj{z_d}}{2i} \right).
}
We consider the following subspace of polynomials in the variables $X_1, Y_1, \ldots, X_d, Y_d$, 
\eql{
	\CC_{\ell}[\Xm,\Ym] \eqdef \Span \enscond{\Xm^{\jm_1}\Ym^{\jm_2}}{\jm_1, \jm_2 \in \NN^d, \max (\jm_1 + \jm_2) \leq \ell}.
}

Obviously, $\CC_{\ell}[\Z,\conj{\Z}]$ and $\CC_{\ell}[\Xm, \Ym]$ are isomorphic as vector spaces. We are interested in the relations between the moment matrices $M_{\RR^2}$ and $M_\CC$ when changing variables with $S$, that is
\begin{align}
	&\forall k \in \{1, \ldots, d\}, \quad Z_k \eqdef X_k + iY_k, \quad \conj{Z_k} \eqdef X_k - iY_k, \quad \text{or conversely}, \label{eq:change1}\\
	&\forall k \in \{1, \ldots, d\}, \quad X_k \eqdef \frac{1}{2}(Z_k+\conj{Z_k}), \quad Y_k \eqdef \frac{1}{2i}(Z_k - \conj{Z_k}).
\end{align}

We note that for all $k \in \{1, \ldots, d\}$, given some indices $i_k, j_k$,
\eq{
	\conj{Z_k}^{j_{1,k}}Z_k^{j_{2,k}} = (X_k + iY_k)^{j_{1,k}} (X_k - iY_k)^{j_{2,k}} = \sum_{r_1, r_2} c_{r_1, r_2} X_k^{r_1}Y_k^{r_2},
}
where $c_{r_1, r_2} \in \CC$ and the sum is over all the indices $r_1, r_2 \in \NN$ such that $r_1 + r_2 = j_{1,k} + j_{2,k}$. As a result, given $\jm_1, \jm_2 \in \NN^d$,
\eql{
	\conj{\Z}^{\jm_1}\Z^{\jm_2} = \prod_{k=1}^d (X_k + iY_k)^{j_{1,k}} (X_k - iY_k)^{j_{2,k}} = \sum_{\mathbf{r}_1, \mathbf{r}_2} c_{\mathbf{r}_1, \mathbf{r}_2} \Xm^{\mathbf{r}_1}\Ym^{\mathbf{r}_2},
}
where the sum is over all the multi-indices $\mathbf{r}_1, \mathbf{r}_2 \in \NN^d$ such that, for all $k$, $r_{1,k} + r_{2,k} = j_{1,k} + j_{2,k}$.

As a result, the change of variable \eqref{eq:change1} induces a linear map $L : \CC_\ell[\Xm,\Ym] \rightarrow \CC_\ell[\Z,\conj{\Z}]$ which admits a block decomposition, mapping surjectively (hence bijectively), for each $\mathbf{t} \in \NN^d$ with $\max (\mathbf{t}) \leq \ell$, the space
\begin{align*}
		&\Span \enscond{\Xm^\im\Ym^\jm}{\im,\jm \in \NN^d, \quad \im + \jm = \mathbf{t}}\\
		\quad \text{onto} \quad &\Span \enscond{\conj{\Z}^\im\Z^\jm}{\im,\jm \in \NN^d, \quad \im + \jm = \mathbf{t}}
\end{align*}
Confronting \eqref{eq:moment-C} and \eqref{eq:moment-R}, in view of the change of variable formula
\eql{
	\int_{(\RR^d)^2} \psi(x,y) \diff\nu_{\RR^2}(x,y) = \int_{\CC^d} \psi(S(z)) \diff\nu_\CC(z), 
}
we deduce that
\eql{\label{eq:congruency-C-R}
	M_{\RR^2} = L^* M_\CC L,
}
where $L$ is invertible.

\begin{lemma} 
\label{lem:equiv-C-R}
The relation $\nu_{\RR^2} = S_\#\nu_\CC$ defines a one-to-one correspondence between (positive Borel) measures $\nu_\CC$ on $\CC^d$ which represent $M_\CC$ ad measures $\nu_{\RR^2}$ on $(\RR^d)^2$ which represent $M_{\RR^2}$. That correspondence preserves the cardinality of the support and $S(\Supp \nu_\CC) = \Supp \nu_{\RR^2}$.

Conversely, let $M_\CC$ and $M_{\RR^2}$ be matrices (indexed by $\CC_\ell[\Z,\conj{\Z}]$ and $\CC_\ell[\Xm,\Ym]$ respectively) such that \eqref{eq:congruency-C-R} holds. Then,
\begin{enumerate}
\item $M_\CC$ satisfies \eqref{eq:genhank-C} if and only if $M_{\RR^2}$ satisfies \eqref{eq:genhank-R}, 
\item $M_{\RR^2} \succeq 0$ if and only if $M_\CC \succeq 0$,
\item $\rank M_\CC = \rank M_{\RR^2}$. Moreover $M_\CC$ is flat if and only if $M_{\RR^2}$ is flat.
\end{enumerate} 
\end{lemma}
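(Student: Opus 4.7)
The proof will mirror that of Lemma~\ref{lem:equiv-T-C} but is in fact simpler, since $S$ is a global homeomorphism and no analogue of the support constraint $(\SS^1)^d$ needs to be enforced. I would split it into a ``measure part'' and a ``matrix part''.

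For the first part, I would observe that $S:\CC^d \to (\RR^d)^2$ is a homeomorphism with inverse $(x,y)\mapsto (x_1+iy_1,\ldots,x_d+iy_d)$. Hence the pushforward $\nu_\CC \mapsto S_\#\nu_\CC$ is a bijection between positive Borel measures on $\CC^d$ and on $(\RR^d)^2$, which automatically preserves the topological support and its cardinality, as well as $S(\Supp \nu_\CC) = \Supp \nu_{\RR^2}$. To see that representing measures correspond to each other through this bijection, I would apply the change-of-variable formula $\int \psi(x,y)\diff\nu_{\RR^2} = \int \psi(S(z))\diff\nu_\CC$ to the monomials $\Xm^{\im_1+\jm_1}\Ym^{\im_2+\jm_2}$: this is exactly the computation preceding \eqref{eq:congruency-C-R}, and identifies each entry of $M_{\RR^2}$ with the corresponding moment of $\nu_{\RR^2}$.

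For the converse matrix statements, I would exploit the fact that $L$ is block-diagonal with respect to the grading of the polynomial spaces by the total-degree vector $\im_1+\im_2 \in \NN^d$, as already noted in the paragraph preceding the lemma, and in particular preserves the filtration $\CC_k[\cdot,\cdot]\subset \CC_\ell[\cdot,\cdot]$ for every $k\le \ell$. From these two properties alone, items (2) and (3) follow routinely: semi-definiteness is preserved by any congruence with an invertible matrix since $v^*M_{\RR^2}v = (Lv)^*M_\CC(Lv)$ and $v\mapsto Lv$ is bijective, rank likewise, and flatness---which asserts that the submatrix indexed by $\max(\im_1+\im_2)\le \ell-1$ has the same rank as the full one---transfers because $L$ bijectively identifies the ``principal'' subspaces on both sides. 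For item (1), I would reinterpret the generalized Hankel properties \eqref{eq:genhank-C} and \eqref{eq:genhank-R} as the statement that the bilinear form defined by the moment matrix factors through the product map $(p,q)\mapsto p\cdot q$ into $\CC_{2\ell}[\cdot,\cdot]$. Since the change of variable \eqref{eq:change1} is a ring isomorphism, $L$ extends to an isomorphism $\tilde L$ at degree $2\ell$ that intertwines the two product maps, and the factorization property then transfers. The cleanest implementation would be to introduce the linear functionals $\mathcal{L}_\CC$ and $\mathcal{L}_{\RR^2}$ on $\CC_{2\ell}$ of which the two matrices are the moment matrices, and to note that they satisfy $\mathcal{L}_{\RR^2}=\mathcal{L}_\CC\circ \tilde L$.

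The step I expect to require the most care is the transfer of the Hankel property (1). Unlike semi-definiteness and rank, it is not preserved by arbitrary invertible congruences, so the argument really needs to use the specific algebraic nature of $L$ (that it comes from a change of variable between two polynomial rings), not just its invertibility. Everything else reduces to standard facts about congruence transformations combined with the block-diagonal structure of $L$.
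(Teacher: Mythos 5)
Your proposal is correct and follows essentially the same route as the paper, which states this lemma without an explicit proof as an immediate consequence of the change-of-variables formula, the fact that $S$ is a homeomorphism, and the congruence \eqref{eq:congruency-C-R} with $L$ invertible and block-diagonal with respect to the grading by $\im_1+\im_2$. Your extra care on item (1) -- observing that the generalized Hankel property is not preserved by arbitrary invertible congruences and must be transferred via the algebra-isomorphism structure of $L$ (equivalently, via the induced linear functionals on polynomials of degree up to $2\ell$) -- is a faithful and welcome fleshing-out of what the paper leaves implicit.
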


\subsection{Existence of a representing measure}
The rest of the proof of Theorem~\ref{thm:flatness} relies on the results of \cite{laurent09}. The matrix $R$ is a positive semi-definite matrix indexed by the set of monomials $\enscond{\Z^\jm}{\jm \in \ZZ^d, \abs{\jm}_{\infty} \leq \ell}$ which satisfies \eqref{eq:gentoep-T}.

Applying successively \eqref{eq:congruency-T-C} and \eqref{eq:congruency-C-R}, we obtain a matrix $N \eqdef L^*J^*RJL$, which satisfies \eqref{eq:genhank-R}, indexed by the set of monomials 
$$\Cc_\ell \eqdef \enscond{\Xm^{\jm_1}\Ym^{\jm_2}}{\jm_1, \jm_2 \in \NN^d, \max (\jm_1 + \jm_2) \leq \ell}.$$

We note that $\Cc_{\ell-1}$ is closed under taking divisors, hence connected to $1$ (see \cite[Sec. 1.3]{laurent09} for the definition). Moreover, its closure,
\begin{align}
	\Cc_{\ell-1}^+ &\eqdef \Cc_{\ell-1} \cup \left( \bigcup_{k=1}^d X_k \Cc_{\ell-1} \right) \cup \left( \bigcup_{k=1}^d Y_k\Cc_{\ell-1} \right)\\
			&= \enscond{m, X_1m, \ldots, X_dm, Y_1m, \ldots, Y_dm}{m \in \Cc_{\ell-1}}
\end{align}
is a subset of $\Cc_\ell$. The matrix $\restr{N}{\Cc_\ell}$ being a flat extension of $\restr{N}{\Cc_{\ell-1}}$ (by Lemma~\ref{lem:equiv-T-C} and \ref{lem:equiv-C-R}), this implies that $\restr{N}{\Cc_{\ell-1}^+}$ is a flat extension of $\restr{N}{\Cc_{\ell-1}}$.

The theorem \cite[Th. 3.2]{laurent09} by Laurent and Mourrain then ensures that there exists a representing measure $\nu_{\RR^2}$ for $\restr{N}{\Cc_{\ell-1}^+}$. As the theorem does not say anything about the rows and columns in $\Cc_\ell \setminus \CC_{\ell-1}^+$, we show below that $\nu_{\RR^2}$ actually represents all the entries of $N$.

\begin{lemma}
The measure $\nu_{\RR^2}$ represents $N$.
\end{lemma}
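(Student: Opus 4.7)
The plan is to use the flatness of $N$ to reduce any monomial in $\Cc_\ell$ to its ``normal form'' in $\Cc_{\ell-1}$ modulo the column relations of $N$, and then to show that this reduction is respected both by $N$ (by construction) and by the measure $\nu_{\RR^2}$ (because the atoms lie in the variety cut out by these column relations). The Laurent--Mourrain theorem provides the representation on $\Cc_{\ell-1}^+ \times \Cc_{\ell-1}^+$ only, so the argument is precisely what is needed to fill the gap between $\Cc_{\ell-1}^+$ and $\Cc_\ell$.

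More precisely, the flatness of $N$ ensures that for every $m \in \Cc_\ell$ there exist coefficients $(\lambda_{m,m'})_{m' \in \Cc_{\ell-1}}$ with $N_{\cdot,m} = \sum_{m'} \lambda_{m,m'} N_{\cdot,m'}$ (as columns indexed by $\Cc_\ell$). Set $\tilde m := \sum_{m'} \lambda_{m,m'} m' \in \CC_{\ell-1}[\Xm,\Ym]$. For any $\im \in \Cc_{\ell-1}$ and any $m \in \Cc_\ell$, chasing the relation and using the Laurent--Mourrain representation on $\Cc_{\ell-1} \times \Cc_{\ell-1}$ yields $N_{\im,m} = \int \im \tilde m \, d\nu_{\RR^2}$. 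When $m \in \Cc_{\ell-1}^+$ one has in addition $N_{\im,m} = \int \im m \, d\nu_{\RR^2}$, so $\int \im(m-\tilde m)\,d\nu_{\RR^2} = 0$ for all $\im \in \Cc_{\ell-1}$.

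The next step is to pass from this integral identity to a pointwise identity on the support. The measure $\nu_{\RR^2}$ is discrete, say $\nu_{\RR^2} = \sum_{k=1}^r a_k \delta_{p_k}$ with $a_k > 0$ and pairwise distinct $p_k$, and $r = \rank N = \rank \restr{N}{\Cc_{\ell-1}}$ by flatness. Since $\restr{N}{\Cc_{\ell-1}} = \sum_k a_k w(p_k) w(p_k)^\top$ with $w(p) = (m(p))_{m\in\Cc_{\ell-1}}$ has rank $r$, the vectors $w(p_1),\dots,w(p_r)$ are linearly independent. The relations $\sum_k a_k (m-\tilde m)(p_k)\, \im(p_k) = 0$ for all $\im \in \Cc_{\ell-1}$ therefore force $(m-\tilde m)(p_k) = 0$ for every atom, i.e.\ $m$ and $\tilde m$ coincide on $\Supp \nu_{\RR^2}$.

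Finally, given $m_1, m_2 \in \Cc_\ell$, applying the column relation once, then its transpose via the Hermitian/Hankel symmetry of $N$, gives
\[
  N_{m_1,m_2} \;=\; \sum_{m'_1,m'_2 \in \Cc_{\ell-1}} \lambda_{m_1,m'_1}\lambda_{m_2,m'_2}\, N_{m'_1,m'_2} \;=\; \int \tilde m_1 \tilde m_2 \, d\nu_{\RR^2},
\]
where the second equality uses the Laurent--Mourrain representation on $\Cc_{\ell-1}\times\Cc_{\ell-1}$. Since $m_i$ and $\tilde m_i$ coincide on $\Supp \nu_{\RR^2}$, the right-hand side equals $\int m_1 m_2\, d\nu_{\RR^2}$, concluding the proof. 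The main subtlety lies in the support identification step: one must ensure that the atoms of the Laurent--Mourrain measure are actually separated by the monomials in $\Cc_{\ell-1}$, which is guaranteed by the coincidence of ranks $r = \rank \restr{N}{\Cc_{\ell-1}} = \rank N$ coming from the flatness hypothesis.
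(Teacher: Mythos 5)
Your strategy is sound up to a point, and the part you do carefully (the reduction $\tilde m$ of a monomial $m$, the identity $N_{\im,m}=\int \im\,\tilde m\,\diff\nu_{\RR^2}$ for $\im\in\Cc_{\ell-1}$, and the conclusion that $m-\tilde m$ vanishes on $\Supp\nu_{\RR^2}$) is essentially correct — but it is only established for $m\in\Cc_{\ell-1}^+$, because the second ingredient you use there, namely $N_{\im,m}=\int \im\, m\,\diff\nu_{\RR^2}$, is exactly what the Laurent--Mourrain theorem provides on $\Cc_{\ell-1}^+\times\Cc_{\ell-1}^+$ and nothing more. In the last step you then invoke ``$m_i$ and $\tilde m_i$ coincide on $\Supp\nu_{\RR^2}$'' for arbitrary $m_1,m_2\in\Cc_\ell$. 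For $d\geq 2$ the set $\Cc_\ell\setminus\Cc_{\ell-1}^+$ is nonempty (any monomial attaining the maximal degree $\ell$ in two or more coordinate blocks, e.g.\ of the form $\tilde{\Xm}^{\jm+\e^{(k_1)}+\ldots+\e^{(k_n)}}$ with $n\geq 2$), and for such $m$ the identity $\int \im\,(m-\tilde m)\,\diff\nu_{\RR^2}=0$ has not been proved: asserting $N_{\im,m}=\int \im\, m\,\diff\nu_{\RR^2}$ there is precisely the statement of the lemma, so the argument becomes circular at its crucial point. (In dimension $d=1$ one has $\Cc_{\ell-1}^+=\Cc_\ell$ and your proof closes, but the theorem is needed for $d\geq 2$.)

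This missing step is the technical heart of the paper's proof, and it cannot be waved away: one must show that the column relation expressing $\tilde{\Xm}^{\jm+\e^{(k_1)}+\ldots+\e^{(k_n)}}$ in terms of $\Cc_{\ell-1}$ is a \emph{consequence} of the one-step relations $f^{(1)}=\tilde{\Xm}^{\jm+\e^{(k_1)}}-q_{\jm,\e^{(k_1)}}(\Xm,\Ym)$, which are determined by $\restr{N}{\Cc_{\ell-1}^+}$ alone (the data shared with $M_{\RR^2}$). The paper does this via the recursiveness identity \eqref{eq:recursiveness} (following \cite[Lem.~5.7]{laurent10}): using the generalized Hankel structure and $f^{(1)}\in\ker N$, one shows $f^{(n)}\eqdef \tilde{\Xm}^{\jm+\e^{(k_1)}+\ldots+\e^{(k_n)}}-\tilde{\Xm}^{\e^{(k_2)}+\ldots+\e^{(k_n)}}q_{\jm,\e^{(k_1)}}$ also kills the rows in $\Cc_{\ell-1}$, and an induction reduces every high column to $\Cc_{\ell-1}$ with coefficients built only from the $q_{\jm,\e^{(k)}}$'s; this yields $B=AQ$, $C=Q^*AQ$ with $Q$ determined by $\restr{N}{\Cc_{\ell-1}^+}$, and the same $Q$ works for $M_{\RR^2}$, hence $N=M_{\RR^2}$. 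Equivalently, in your language, you would need to prove that $m-\tilde m$ lies in the span generated by the one-step kernel polynomials (modulo $\ker\restr{N}{\Cc_{\ell-1}^+}$) so that its vanishing on the atoms follows; without that argument the proof has a genuine gap. (A secondary, minor point: when you transpose a column relation you should conjugate the coefficients, since $N$ is only Hermitian a priori; this is cosmetic compared with the issue above.)
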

\begin{proof}
Let us write
\eql{
	R = \begin{blockarray}{ccc} & \Cc_{\ell-1} & \Cc_{\ell} \setminus \Cc_{\ell-1}\\
		\begin{block}{c(cc)}
			\Cc_{\ell-1} 		        			& A     & B \\
			\Cc_{\ell} \setminus \Cc_{\ell-1} 	& B^* & C \\
		\end{block} 
	\end{blockarray}
	\eqdef N.
}
A first step is to show that $B = AQ$ for some matrix $Q$ which only depends on $\restr{N}{\Cc_{\ell-1}^+}$, the restriction of $N$ to $\Cc_{\ell-1}^+ \times \Cc_{\ell-1}^+$. Then, since $N \succeq 0$ is flat, we deduce by \cite[Prop. 2.2]{curto96} that $C$ is uniquely determined from $A$ and $B$, as $C = Q^*AQ$.

In a second step, we consider the moment matrix $M_{\RR^2}$ of $\nu_{\RR^2}$ on $\Cc_\ell$. As it is flat, positive semi-definite and generalized Hankel, its block satisfy a similar property as those of $N$, involving some matrix $\tilde{Q}$ which depends on $\restr{M_{\RR^2}}{\Cc_{\ell-1}^+}$. The key point is that since $M_{\RR^2}$ and $N$ coincide in $\Cc_{\ell-1}^+ \times \Cc_{\ell-1}^+$, the matrices $Q$ and $\tilde{Q}$ are equal, hence the whole matrices $N$ and $M_{\RR^2}$ are equal.

The main point is therefore to prove that $B = AQ$ (the argument for $M_{\RR^2}$ being similar). To lighten the notation, we write $\tilde{\Xm}^\jm \eqdef \Xm^{\jm_1}\Ym^{\jm_2}$ with $\jm = (\jm_1,\jm_2)$. For $k \in \{1, \ldots, d\}$, we denote by $\e^{(k)} = (\e_1^{(k)}, \e_2^{(k)}) \in \NN^d \times \NN^d$ any multi-index such that
\eql{
	\e_{1,n}^{(k)} = \e_{2,n}^{(k)} = 0 \quad \text{for} \; n \neq k, \qandq \left(\e_1^{(k)}, \e_2^{(k)}\right) = (1,0) \; \text{or} \; (0,1).
}
The elements of $\Cc_{\ell-1}^+ \setminus \Cc_{\ell-1}$ are of the form $\tilde{\X}^{\jm+\e^{(k)}}$ with $\max (\jm_1 + \jm_2) = \ell - 1$, whereas the elements of $\Cc_{\ell} \setminus \Cc_{\ell-1}$ have the form $\tilde{\X}^{\jm+\e^{(k_1)}+\ldots+\e^{(k_n)}}$ where $k_1, \ldots, k_n$ are distinct elements of $\{1, \ldots, d\}$.

Let $\jm_1,\jm_2 \in \NN^d$ with $\max (\jm_1 + \jm_2) \leq \ell-1$. Since $\restr{N}{\Cc_{\ell-1}^+}$ is a flat extension of $\restr{N}{\Cc_{\ell-1}}$, any column of $\restr{N}{\Cc_{\ell-1}^+}$ of the form $\tilde{\Xm}^{\jm+\e^{(k_1)}}$ is a linear combination of the columns in $\CC_{\ell-1}$. Let $q_{\jm,\e^{(k_1)}}$ (resp. $q_{\jm,\e^{(k_1)}}(\Xm,\Ym)$) denote the coefficients of that combination (resp. the corresponding polynomial\footnote{In the following we identify polynomials with the vectors of their coefficients, so that we can ``apply a moment matrix to a polynomial'' with suitable degree}). In other words, the polynomial $f^{(1)} \eqdef (\tilde{\X})^{\jm+\e^{(k_1)}} -  q_{\jm,\e^{(k_1)}}(\Xm,\Ym)$ is in $\ker \restr{N}{\Cc_{\ell-1}^+}$. Since $N \succeq 0$, we deduce that in fact $f^{(1)} \in \ker N$. 

For $2 \leq n \leq d$, consider the polynomial 
$$f^{(n)} \eqdef (\tilde{\Xm})^{\jm+\e^{(k_1)}+\ldots+\e^{(k_n)}} - \tilde{\Xm})^{\e^{(k_2)} + \ldots + \e^{(k_n)}}q_{\jm,\e^{(k_1)}}(\Xm,\Ym) \in \Cc_\ell.$$ 
As the $k_i$'s are pairwise distinct, we see that for $\im \in \Cc_{\ell-1}$, we have 
$$(\tilde{\Xm})^{\jm+\e^{(k_1)}+\ldots+\e^{(k_n)}} \in \Cc_\ell.$$
Hence, proceeding as in \cite[Lem. 5.7]{laurent10}, we get
\eql{\label{eq:recursiveness}
	(Nf^{(n)})_\im = (Nf^{(1)})_{\im+\e^{(k_2)} + \ldots + \e^{(k_n)}} = 0,
}
since  $f^{(1)} \in \ker N$.

From~\eqref{eq:recursiveness} and the definition of $f^{(n)}$, we deduce that the column $(\tilde{\Xm})^{\jm+\e^{(k_1)}+\ldots+\e^{(k_n)}}$ of $B$ is a linear combination of columns of the form $(\tilde{\Xm})^{\jm'+\e^{(k_1')}+\ldots+\e^{(k_{n-1}')}}$ with $\max (\jm') \leq \ell-1$. By an easy induction we obtain that the column $(\tilde{\Xm})^{\jm+\e^{(k_1)}+\ldots+\e^{(k_n)}}$ of $N$ is thus a linear combination of columns of the form $(\tilde{\Xm})^{\jm'+\e^{(k_1')}}$. By flatness of $\Cc_{\ell-1}^+$, it is thus a combination of columns corresponding to $\Cc_{\ell-1}$.

As a result, there exists a matrix $Q$ such that $B = AQ$, and $Q$ is uniquely determined from th e polynomials $q_{\jm,\e^{(k_1)}}$ for $\jm \in \Cc_{\ell-1}$, $1 \leq k_1 \leq d$. Since the same polynomials $q_{\jm,\e^{(k_1)}}$ can be used for $M_{\RR^2}$, we obtain that $\tilde{Q} = Q$ hence $M_{\RR^2} = N$.
\end{proof}

Now, we may go back to the torus. We observe that for $1\leq k \leq d$, the polynomial $X_k^2 + Y_k^2 - 1$ is in the kernel of $N = M_{\RR^2}$, hence $\nu_{\RR^2}$ is supported in $\bigcap_{k=1}^d \enscond{(x,y) \in (\RR^d)^2}{ x_k^2 + y_k^2 = 1}$. Applying Lemma~\ref{lem:equiv-C-R} and \ref{lem:equiv-T-C} above, we obtain the existence of a measure $\nu_{\Torus}$ such that $R$ is the moment matrix of $\nu_\Torus$ on $\Torus^d$, which is the claimed result.

\section*{Acknowledgments}
We would like to thank C\'edric Josz for letting us use his implementation of the extraction procedure.

\bibliographystyle{siamplain}
\bibliography{LowRankFFW}
\end{document}